% remove [H] to save space?

\documentclass[
	english
]{scrartcl}
%%fakesection: Packages

\usepackage[T1]{fontenc}
\usepackage[utf8]{inputenc}

\usepackage{amsmath,amsfonts,amsthm,amssymb}
\allowdisplaybreaks

\usepackage[colorlinks,citecolor=blue]{hyperref}
\usepackage{doi,natbib}
\usepackage{cancel}
\usepackage{verbatim}
\usepackage{float}

\usepackage[shortlabels]{enumitem}

\usepackage{relsize}
\usepackage[utf8]{inputenc}

\usepackage[colorlinks,citecolor=blue]{hyperref}
\usepackage[figure]{hypcap}
\usepackage{doi,natbib}

\usepackage{changes}

\usepackage{preamble}

\usepackage{marginnote}

\usepackage[capitalise,nameinlink]{cleveref}

\usepackage{graphicx}
\graphicspath{{./img/}}
\usepackage{subcaption} 

%%fakesection: Commands

\newcommand\norm[2]{\left\Vert#1\right\Vert_{#2}}

\newcommand\N{\mathbb{N}}
\newcommand\R{\mathbb{R}}

\newcommand{\supp}{\operatorname{supp}}

%% Für die Einheitlichkeit

\DeclareMathAlphabet{\mathpzc}{OT1}{pzc}{m}{it}

%%fakesection: NewTheorems
\newtheorem{theorem}{Theorem}[section]
\newtheorem{lemma}[theorem]{Lemma}

\newtheorem{assumption}[theorem]{Assumption}
\newtheorem{corollary}[theorem]{Corollary}

\newtheorem{definition}[theorem]{Definition}
\newtheorem{example}[theorem]{Example}
\newtheorem{conjecture}[theorem]{Conjecture}

%%fakesection: Other stuff.

\definecolor{mygreen}{rgb}{0.0,0.7,0.0}
\definecolor{myred}{rgb}{0.7,0.0,0.3}
\definecolor{mybrown}{rgb}{0.5,0.5,0.0}

\begin{document}

\title{Relaxation schemes for mathematical programs with switching constraints}
%\subtitle{}
\author{%
	Christian Kanzow%
	\footnote{%
		University of W\"urzburg,
		Institute of Mathematics,
		97074 W\"urzburg,
		Germany,
		\email{kanzow@mathematik.uni-wuerzburg.de},
		\url{https://www.mathematik.uni-wuerzburg.de/\~kanzow/},
		ORCID: 0000-0003-2897-2509
		}
	\and
	Patrick Mehlitz%
	\footnote{%
		Brandenburgische Technische Universität Cottbus--Senftenberg,
		Institute of Mathematics,
		03046 Cottbus,
		Germany,
		\email{mehlitz@b-tu.de},
		\url{https://www.b-tu.de/fg-optimale-steuerung/team/dr-patrick-mehlitz},
		ORCID: 0000-0002-9355-850X%
		}
	\and
	Daniel Steck%
	\footnote{%
    	University of W\"urzburg,
		Institute of Mathematics,
		97074 W\"urzburg,
		Germany,
		\email{daniel.steck@mathematik.uni-wuerzburg.de},
		\url{https://www.mathematik.uni-wuerzburg.de/\~steck/},
		ORCID: 0000-0001-5335-5428
		}
	}

% \date{\today}
\publishers{}
\maketitle

\begin{abstract}
	 Switching-constrained optimization problems form a difficult class of mathematical programs 
	 since their feasible set is almost disconnected while standard constraint qualifications
	 are likely to fail at several feasible points. That is why the application of standard 
	 methods from nonlinear programming does not seem to be promising in order to solve such
	 problems. In this paper, we adapt the relaxation method from Kanzow and Schwartz (SIAM J.~Optim., 23(2):770--798, 2013)
	 for the numerical treatment of mathematical programs with complementarity constraints to
	 the setting of switching-constrained optimization. It is shown that the proposed method
	 computes M-stationary points under mild assumptions. Furthermore, we comment on 
	 other possible relaxation approaches which can be used to tackle mathematical programs with
	 switching constraints. As it turns out, adapted versions of Scholtes' global relaxation scheme
	 as well as the relaxation scheme of Steffensen and Ulbrich only find W-stationary points of
	 switching-constrained optimization problems in general. Some computational experiments visualize
	 the performance of the proposed relaxation method.
\end{abstract}

\begin{keywords}	
	 Constraint qualifications, Mathematical program with switching constraints, Relaxation methods, Global convergence.
\end{keywords}

\begin{msc}	
	65K05, 90C30, 90C33.
\end{msc}

\section{Introduction}\label{sec:introduction}

This paper is dedicated to so-called \emph{mathematical programs with switching constraints}, MPSCs for short. These are optimization problems of the form
\begin{equation}\label{eq:MPSC}\tag{MPSC}
	\begin{aligned}
		f(x)&\,\rightarrow\,\min&&&\\
		g_i(x)&\,\leq\,0,&\qquad&i\in\mathcal M,&\\
		h_j(x)&\,=\,0,&\qquad&j\in\mathcal P,&\\
		G_l(x)H_l(x)&\,=\,0,&\qquad&l\in\mathcal Q,&
	\end{aligned}
\end{equation}
where $\mathcal{M}:=\{1,\ldots,m\}$, $\mathcal{P}:=\{1,\ldots,p\}$, $\mathcal{Q}:=\{1,\ldots,q\}$ are index sets and the functions 
$f,g_i,h_j,G_l,H_l\colon\R^n\to\R$  are continuously differentiable for all $i\in\mathcal M$, $j\in\mathcal P$, and $l\in\mathcal Q$. 
For brevity, $g\colon\R^n\to\R^m$, $h\colon\R^n\to\R^p$, $G\colon\R^n\to\R^q$, and $H\colon\R^n\to\R^q$
are the mappings which possess the component functions 
$g_i$ ($i\in\mathcal M$), $h_j$ ($j\in\mathcal P$), $G_l$ ($l\in\mathcal Q$), 
and $H_l$ ($l\in\mathcal Q$), respectively.
The last $q$ constraints in \eqref{eq:MPSC} force $G_l(x)$ or $H_l(x)$ to be zero for all $l\in\mathcal Q$, which gives rise to the terminology ``switching constraints''.

Switching structures appear frequently in the context of optimal control, see 
\cite{ClasonRundKunisch2017,Gugat2008,HanteSager2013,Liberzon2003,Seidman2013,WangYan2015,Zuazua2011},
and the references therein, or as a reformulation of so-called \emph{either-or constraints}, 
see \cite[Section~7]{Mehlitz2018}.
Naturally, \eqref{eq:MPSC} is related to other problem classes from disjunctive programming 
such as \emph{mathematical programs with complementarity constraints}, MPCCs for short, 
see \cite{LuoPangRalph1996,OutrataKocvaraZowe1998}, 
or \emph{mathematical programs with vanishing constraints}, 
MPVCs for short, see \cite{AchtzigerKanzow2008,HoheiselKanzow2008}. 
Indeed, similarly to MPCCs and MPVCs, standard constraint qualifications are likely to be violated 
at the feasible points of \eqref{eq:MPSC}. Recently, stationarity conditions and 
constraint qualifications for \eqref{eq:MPSC} were introduced in \cite{Mehlitz2018}.

Here, we focus on the computational treatment of \eqref{eq:MPSC}. 
Clearly, standard methods from nonlinear programming may run into difficulties
when applied to \eqref{eq:MPSC} due to two reasons:
first, the feasible set of \eqref{eq:MPSC} is likely to be disconnected or at least \emph{almost}
disconnected. Secondly, standard regularity conditions like the Mangasarian--Fromovitz constraint qualification,
MFCQ for short, are likely to fail at the feasible points of \eqref{eq:MPSC} under mild assumptions,
see \cite[Lemma~4.1]{Mehlitz2018}. Similar issues appear in the context of MPCCs and MPVCs where several
different relaxation schemes were introduced to overcome these shortcomings, 
see \cite{HoheiselKanzowSchwartz2012,HoheiselKanzowSchwartz2013} and the references therein.
Basically, the idea is to relax the \emph{irregular} constraints using a relaxation parameter such that
the resulting surrogate problems are (regular) standard nonlinear problems which can be tackled by common methods.
The relaxation parameter is then iteratively reduced to zero and, in each iteration, a Karush--Kuhn--Tucker (KKT) 
point of the surrogate problem is computed.
Ideally, the resulting sequence possesses a limit point and, under some problem-tailored constraint qualification, 
this point satisfies a suitable stationarity condition.
Furthermore, it is desirable that the relaxed problems satisfy standard constraint qualifications in a neighborhood 
of the limit point under reasonable assumptions.

In this paper, we show that the relaxation scheme from \cite{KanzowSchwartz2013}, which was designed
for the numerical investigation of MPCCs, can be adapted for the computational treatment of \eqref{eq:MPSC}. 
Particularly, it will be shown that the modified method can be used to find M-stationary points of \eqref{eq:MPSC}.
By means of examples it is demonstrated that other relaxation methods which are well known from the theory of MPCCs
only yield W-stationary points of \eqref{eq:MPSC}. We present the results of some numerical experiments which
show the performance of the proposed method.

The remaining part of the paper is structured as follows. In Section~\ref{sec:notation_preliminaries}, we describe the general notation used throughout the paper and recall some fundamental theory on nonlinear programming and switching constraints. Section~\ref{sec:relaxation_scheme_KS} is dedicated to the main relaxation approach and contains various properties of the resulting algorithm as well as convergence results. In Section~\ref{sec:other_relaxations}, we describe how some common regularization methods from MPCCs can be carried over to the switching-constrained setting, and discuss the convergence properties of the resulting algorithms. Section~\ref{sec:numerical_results} contains some numerical applications, including either-or constraints, switching-constrained optimal control, and semi-continuous optimization problems arising in portfolio optimization. We conclude the paper with some final remarks in Section~\ref{sec:final_results}.

\section{Notation and preliminaries}\label{sec:notation_preliminaries}

\subsection{Basic notation}

The subsequently introduced tools of variational analysis can be 
found in \cite{RockafellarWets1998}.

For a nonempty set $A\subset\R^n$, we call
\begin{equation*}
    A^\circ:=\{y\in\R^n\,|\,\forall x\in A\colon\,x\cdot y\leq 0\}
\end{equation*}
the \emph{polar cone} of $A$. Here, $x\cdot y$ denotes the Euclidean inner product
of the two vectors $x,y\in\R^n$. It is well known that $A^\circ$
is a nonempty, closed, convex cone. For any two sets $B_1,B_2\subset\R^n$, the
polarization rule $(B_1\cup B_2)^\circ=B_1^\circ\cap B_2^\circ$ holds by definition.
The polar of a polyhedral cone can be characterized by means of, e.g.,
Motzkin's theorem of alternatives. Note that we interpret the relations
$\leq$ and $\geq$ for vectors componentwise.
\begin{lemma}\label{lem:polar_of_polyhedral_cone}
    For matrices $\mathbf C\in\R^{m\times n}$ and $\mathbf D\in\R^{p\times n}$, let $\mathcal K\subset\R^n$ be the polyhedral cone
\begin{equation*}
    \mathcal K:=\{d\in\R^n\,|\,\mathbf Cd\leq 0,\,\mathbf Dd=0\}.
\end{equation*}
    Then $\mathcal K^\circ=\{\mathbf C^\top\lambda+\mathbf D^\top\rho\,|\, \lambda\in\R^m,\,\lambda\geq 0,\,\rho\in\R^p\}$.
\end{lemma}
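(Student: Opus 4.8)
The plan is to verify the two inclusions separately, writing $\mathcal L^\ast:=\{\mathbf C^\top\lambda+\mathbf D^\top\rho\,|\,\lambda\in\R^m,\,\lambda\geq 0,\,\rho\in\R^p\}$ for the set on the right-hand side; the goal is then $\mathcal K^\circ=\mathcal L^\ast$.

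First I would establish the easy inclusion $\mathcal L^\ast\subset\mathcal K^\circ$ by a direct computation. Take any $y=\mathbf C^\top\lambda+\mathbf D^\top\rho$ with $\lambda\geq 0$, and any $d\in\mathcal K$, so that $\mathbf Cd\leq 0$ and $\mathbf Dd=0$. Then $d\cdot y=(\mathbf Cd)\cdot\lambda+(\mathbf Dd)\cdot\rho=(\mathbf Cd)\cdot\lambda\leq 0$, where the last inequality uses $\mathbf Cd\leq 0$ together with $\lambda\geq 0$, and the middle equality uses $\mathbf Dd=0$. Since $d\in\mathcal K$ was arbitrary, this shows $y\in\mathcal K^\circ$.

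For the reverse inclusion $\mathcal K^\circ\subset\mathcal L^\ast$ I would argue by contraposition using a separation argument. The crucial preliminary observation is that $\mathcal L^\ast$ is a \emph{finitely generated}, hence closed and convex, cone: it equals $\cone\{\pm\mathbf D^\top e_j,\,\mathbf C^\top e_i\}$ over the standard basis vectors $e_i,e_j$, so the Minkowski--Weyl theorem guarantees closedness. Now suppose $y\notin\mathcal L^\ast$. By the separation theorem for a point and a closed convex cone there exists $d\in\R^n$ with $d\cdot y>0$ and $d\cdot z\leq 0$ for all $z\in\mathcal L^\ast$. Specializing $z=\mathbf C^\top e_i$ gives $(\mathbf Cd)_i\leq 0$ for each $i\in\mathcal M$, hence $\mathbf Cd\leq 0$, while specializing $z=\pm\mathbf D^\top e_j$ gives $(\mathbf Dd)_j\leq 0$ and $-(\mathbf Dd)_j\leq 0$ for each $j\in\mathcal P$, hence $\mathbf Dd=0$. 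Therefore $d\in\mathcal K$, and since $y\in\mathcal K^\circ$ we would obtain $d\cdot y\leq 0$, contradicting $d\cdot y>0$. Thus no such $y$ exists and $\mathcal K^\circ\subset\mathcal L^\ast$.

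The main obstacle is the closedness of $\mathcal L^\ast$, which is exactly what makes the separation step legitimate; without it one could only separate from the closure and would need an extra limiting argument. This is precisely the content encoded in the Minkowski--Weyl representation of polyhedral cones, and it is the same fact that underlies Motzkin's theorem of alternatives referenced above. Everything else reduces to testing the separating functional against the finitely many generators of $\mathcal L^\ast$, which recovers the defining inequalities and the equality constraint of $\mathcal K$.
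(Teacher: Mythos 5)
Your proof is correct. Note that the paper does not actually prove this lemma: it simply remarks that the polar of a polyhedral cone ``can be characterized by means of, e.g., Motzkin's theorem of alternatives'' and states the result. Your argument therefore supplies a self-contained derivation rather than reproducing a proof from the text. The two routes are close cousins: Motzkin's (or Farkas') theorem of alternatives asserts that either $y$ has the representation $\mathbf C^\top\lambda+\mathbf D^\top\rho$ with $\lambda\geq 0$ or there is a $d$ with $\mathbf Cd\leq 0$, $\mathbf Dd=0$, $d\cdot y>0$ --- which is exactly the dichotomy your separation argument produces --- so in substance you are proving the theorem of alternatives rather than invoking it. You correctly identify the one genuinely nontrivial ingredient, namely the closedness of the finitely generated cone $\cone\{\mathbf C^\top e_i,\,\pm\mathbf D^\top e_j\}$, without which the separation step would only apply to the closure; handling the free variables $\rho_j$ by splitting into the generators $\pm\mathbf D^\top e_j$ is the standard device and is done correctly. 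The only cosmetic remark is that the separation theorem for a point and a closed convex set a priori yields $d\cdot y>c\geq d\cdot z$ for some constant $c$; one should add the one-line observation that, because $\mathcal L^\ast$ is a cone containing $0$, the bound forces $\sup_{z\in\mathcal L^\ast}d\cdot z=0$ and hence the normalized form $d\cdot z\leq 0<d\cdot y$ that you use. This is routine and does not affect the validity of the proof.
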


Let $A\subset\R^n$ be a nonempty set and $\bar x\in A$. Then the closed cone
\begin{equation*}
	\mathcal T_A(\bar x):=
	\left\{
		d\in\R^n\,\middle|\,
		\exists\{x_k\}_{k\in\N}\subset A\,\exists\{\tau_k\}_{k\in\N}\subset\R_+\colon\,x_k\to\bar x,\,\tau_k\to 0,\, (x_k-\bar x)/\tau_k\to d
	\right\}
\end{equation*}
is called \emph{tangent} or \emph{Bouligand cone} to $A$ at $\bar x$. Here, $\R_+:=\{r\in\R\,|\,r>0\}$ denotes the set of all
positive reals.

% positively linearly?
The union $\{v^1,\ldots,v^r\}\cup\{w^1,\ldots,w^s\}$ of sets $\{v^1,\ldots,v^r\},\{w^1,\ldots,w^s\}\subset\R^n$
is called \emph{positive-linearly dependent} if there exist vectors $\alpha\in\R^r$, $\alpha\geq 0$, and $\beta\in\R^s$
which do not vanish at the same time such that
\begin{equation*}
    0=\sum\limits_{i=1}^r\alpha_iv^i+\sum\limits_{j=1}^s\beta_jw^j.
\end{equation*}
Otherwise, $\{v^1,\ldots,v^r\}\cup\{w^1,\ldots,w^s\}$ is called \emph{positive-linearly independent}. Clearly, 
if the set $\{v^1,\ldots,v^r\}$ is empty, then the above definitions reduce to linear dependence and independence,
respectively. The following lemma will be useful in this paper; its proof is similar to that of
\cite[Proposition~2.2]{QiWei2000} and therefore omitted.

% introduce closed ball?
\begin{lemma}\label{lem:local_stability_of_positive_linear_independence}
Let $\{v^1,\ldots,v^r\},\{w^1,\ldots,w^s\}\subset\R^n$ be given sets whose union
 $\{v^1,\ldots,v^r\}\cup\{w^1,\ldots,w^s\}$ 
is positive-linearly independent. 
Then there exists $\varepsilon>0$ such that, for all vectors 
$\tilde v^1,\ldots,\tilde v^r,\tilde w^1,\ldots,\tilde w^s\in\{z\in\R^n\,|\,\norm{z}{2}\leq\varepsilon\}$,
the union $\{v^1+\tilde v^1,\ldots,v^r+\tilde v^r\}\cup\{w^1+\tilde w^1,\ldots,w^s+\tilde w^s\}$ is positive-linearly
independent.
\end{lemma}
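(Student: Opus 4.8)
The plan is to argue by contradiction using a compactness argument on the coefficient vectors. Suppose the claim were false. Then for every $k\in\N$ there would exist perturbation vectors $\tilde v^1_k,\ldots,\tilde v^r_k,\tilde w^1_k,\ldots,\tilde w^s_k$ with $\norm{\tilde v^i_k}{2}\leq 1/k$ and $\norm{\tilde w^j_k}{2}\leq 1/k$ for all admissible $i,j$ such that the correspondingly perturbed union is positive-linearly dependent. By definition, this yields coefficients $\alpha_k\in\R^r$, $\alpha_k\geq 0$, and $\beta_k\in\R^s$, not vanishing at the same time, satisfying
\begin{equation*}
0=\sum_{i=1}^r(\alpha_k)_i\bigl(v^i+\tilde v^i_k\bigr)+\sum_{j=1}^s(\beta_k)_j\bigl(w^j+\tilde w^j_k\bigr).
\end{equation*}

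Since positive-linear dependence is invariant under positive scaling of the coefficient pair $(\alpha_k,\beta_k)$, I would normalize so that $\norm{(\alpha_k,\beta_k)}{2}=1$ holds for each $k$. The normalized vectors then lie in the compact set $\{(\alpha,\beta)\in\R^r\times\R^s\,|\,\alpha\geq 0,\ \norm{(\alpha,\beta)}{2}=1\}$, so after extracting a convergent subsequence (not relabeled) they converge to some limit $(\alpha^*,\beta^*)$ with $\norm{(\alpha^*,\beta^*)}{2}=1$; in particular $(\alpha^*,\beta^*)\neq 0$. Moreover, $\alpha^*\geq 0$ since the nonnegative orthant is closed.

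It remains to pass to the limit $k\to\infty$ in the displayed identity. The coefficients $(\alpha_k,\beta_k)$ are bounded, having unit norm, while each perturbation $\tilde v^i_k$ and $\tilde w^j_k$ tends to $0$; hence every cross term $(\alpha_k)_i\tilde v^i_k$ and $(\beta_k)_j\tilde w^j_k$ vanishes in the limit, and the remaining terms converge by continuity of scalar multiplication and addition. This yields
\begin{equation*}
0=\sum_{i=1}^r\alpha^*_i v^i+\sum_{j=1}^s\beta^*_j w^j,
\end{equation*}
with $\alpha^*\geq 0$ and $(\alpha^*,\beta^*)\neq 0$, which contradicts the positive-linear independence of the original union $\{v^1,\ldots,v^r\}\cup\{w^1,\ldots,w^s\}$. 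The argument is essentially routine; the only point requiring genuine care is the normalization step, which guarantees that the limiting coefficient pair is nonzero, so that the limiting relation actually witnesses positive-linear dependence rather than degenerating to the trivial combination.
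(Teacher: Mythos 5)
Your proof is correct and complete: the normalization of the coefficient pair, the compactness extraction, and the passage to the limit are all handled properly. The paper itself omits the proof, referring instead to the argument of Proposition~2.2 in the cited work of Qi and Wei, which is precisely this contradiction-by-normalization-and-compactness argument, so your approach coincides with the intended one.
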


For some vector $z\in\R^n$ and an index set $I\subset\{1,\ldots,n\}$, $z_I\in\R^{|I|}$ denotes the
vector which results from $z$ by deleting all $z_i$ with $i\in\{1,\ldots,n\}\setminus I$.
Finally, let us mention that $\supp z:=\{i\in\{1,\ldots,n\}\,|\,z_i\neq 0\}$ is called the \emph{support} of 
the vector $z\in\R^n$.

\subsection{Standard nonlinear programs}

Here, we recall some fundamental constraint qualifications from standard nonlinear programming, see, e.g., \cite{BazaraaSheraliShetty1993}. Therefore, we consider the nonlinear program
\begin{equation}\label{eq:nonlinear_program}\tag{NLP}
	\begin{aligned}
		f(x)&\,\rightarrow\,\min&&&\\
		g_i(x)&\,\leq\,0,&\qquad&i\in\mathcal M,&\\
		h_j(x)&\,=\,0,&&j\in\mathcal P,&
	\end{aligned}
\end{equation}
i.e., we forget about the switching constraints in \eqref{eq:MPSC} for a moment.

Let $\tilde X\subset\R^n$ denote the feasible set of \eqref{eq:nonlinear_program} and fix some
point $\bar x\in \tilde X$. Then
\[
	I^g(\bar x):=\{i\in\mathcal M\,|\,g_i(\bar x)=0\}
\]
is called the index set of active inequality constraints at $\bar x$. 
Furthermore, the set
\[
	\mathcal L_{\tilde X}(\bar x)
	:=
	\left\{
		d\in\R^n\,\middle|\,
			\begin{aligned}
				\nabla g_i(\bar x)\cdot d&\,\leq\,0&&i\in I^g(\bar x)\\
				\nabla h_j(\bar x)\cdot d&\,=\,0&&j\in\mathcal P
			\end{aligned}
	\right\}
\]
is called the linearization cone to $\tilde X$ at $\bar x$.
Obviously, $\mathcal L_{\tilde X}(\bar x)$ is a polyhedral cone, and thus closed and convex.
It is well known that
$\mathcal T_{\tilde X}(\bar x)\subset \mathcal L_{\tilde X}(\bar x)$ is always
satisfied. The converse inclusion generally only holds under some constraint qualification.

In the definition below, we recall several standard constraint qualifications which are applicable to \eqref{eq:nonlinear_program}.
\begin{definition}\label{def:CQs}
	Let $\bar x\in \R^n$ be a feasible point of \eqref{eq:nonlinear_program}.
	Then $\bar x$ is said to satisfy the
\begin{enumerate}[(a)]
    \item \emph{linear independence constraint qualification (LICQ)} if the following vectors are linearly independent:
\begin{equation}\label{eq:NLP_vectors_for_CQs}
    \{\nabla g_i(\bar x)\,|\,i\in I^g(\bar x)\}\cup\{\nabla h_j(\bar x)\,|\,j\in\mathcal P\}.
\end{equation}
    \item \emph{Mangasarian--Fromovitz constraint qualification (MFCQ)} if the vectors in \eqref{eq:NLP_vectors_for_CQs} are positive-linearly independent.
    \item \emph{constant positive linear dependence condition (CPLD)} if, for any sets $I\subset I^g(\bar x)$ and $J\subset\mathcal P$ such that the gradients
\begin{equation*}
    \{\nabla g_i(\bar x)\,|\,i\in I\}\cup\{\nabla h_j(\bar x)\,|\,j\in J\}
\end{equation*}
    are positive-linearly dependent, there exists a neighborhood $U\subset\R^n$ of $\bar x$ such that the gradients
\begin{equation*}
    \{\nabla g_i(x)\,|\,i\in I\}\cup\{\nabla h_j(x)\,|\,j\in J\}
\end{equation*}
    are linearly dependent for all $x\in U$.
    \item \emph{Abadie constraint qualification (ACQ)} if $\mathcal T_{\tilde X}(\bar x)=\mathcal L_{\tilde X}(\bar x)$.
    \item \emph{Guignard constraint qualification (GCQ)} if	$\mathcal T_{\tilde X}(\bar x)^\circ=\mathcal L_{\tilde X}(\bar x)^\circ$.
\end{enumerate}
\end{definition}

Note that the following relations hold between the constraint qualifications from Definition \ref{def:CQs}:
\[
	\text{LICQ}\,\Longrightarrow\,\text{MFCQ}\,\Longrightarrow\,\text{CPLD}\,\Longrightarrow\,
	\text{ACQ}\,\Longrightarrow\,\text{GCQ},
\]
see \cite[Section~2.1]{HoheiselKanzowSchwartz2013} for some additional information.

It is well known that the validity of GCQ at some local minimizer $\bar x\in\R^n$ of \eqref{eq:nonlinear_program}
implies that the KKT conditions
\[
	\begin{split}
		&0=\nabla f(\bar x)+\sum\limits_{i\in I^g(\bar x)}\lambda_i\nabla g_i(\bar x)+\sum\limits_{j\in\mathcal P}\rho_j\nabla h_j(\bar x),\\
		&\forall i\in I^g(\bar x)\colon\,\lambda_i\geq 0
	\end{split}
\]
provide a necessary optimality condition. Thus, the same holds for the stronger constraint qualifications ACQ, CPLD, MFCQ, and LICQ.

\subsection{Mathematical programs with switching constraints}

The statements of this section are taken from \cite{Mehlitz2018}.
Let $X\subset\R^n$ denote the feasible set of \eqref{eq:MPSC} and fix a point $\bar x\in X$. Then the index sets
\[
	\begin{split}
		I^G(\bar x)&:=\{l\in\mathcal Q\,|\,G_l(\bar x)=0,\,H_l(\bar x)\neq 0\},\\
		I^H(\bar x)&:=\{l\in\mathcal Q\,|\,G_l(\bar x)\neq 0,\,H_l(\bar x)=0\},\\
		I^{GH}(\bar x)&:=\{l\in\mathcal Q\,|\,G_l(\bar x)=0,\,H_l(\bar x)= 0\}
	\end{split}
\]
form a disjoint partition of $\mathcal Q$. It is easily seen that MFCQ (and thus LICQ) cannot hold for \eqref{eq:MPSC} at $\bar x$ if $I^{GH}(\bar x)\neq\varnothing$. Taking a look at the associated linearization cone
\[
	\mathcal L_X(\bar x)
		=
		\left\{
			d\in\R^n\,
		\middle|\,
			\begin{aligned}
				\nabla g_i(\bar x)\cdot d&\,\leq\,0&&i\in I^g(\bar x)\\
				\nabla h_j(\bar x)\cdot d&\,=\,0&&j\in \mathcal P\\
				\nabla G_k(\bar x)\cdot d&\,=\,0&&k\in I^G(\bar x)\\
				\nabla H_k(\bar x)\cdot d&\,=\,0&&k\in I^H(\bar x)
			\end{aligned}
		\right\},
\]
which is always convex, one can imagine that ACQ is likely to fail as well if $I^{GH}(\bar x)\neq\varnothing$ since, in the latter situation, $\mathcal T_X(\bar x)$ might be nonconvex. Note that GCQ may hold for \eqref{eq:MPSC} even
in the aforementioned context.

Due to the inherent lack of regularity, stationarity conditions for \eqref{eq:MPSC} which are weaker than the associated 
KKT conditions were introduced. 
\begin{definition}\label{def:stationarities}
    A feasible point $\bar x\in X$ of \eqref{eq:MPSC} is called
\begin{enumerate}[(a)]
    \item \emph{weakly stationary (W-stationary)} if there exist multipliers $\lambda_i$ ($i\in I^{g}(\bar x)$), $\rho_j$ ($j\in\mathcal P$), 
    $\mu_l$ ($l\in\mathcal Q$), and $\nu_l$ ($l\in\mathcal Q$) which solve the following system:
			\[
				\begin{split}
					&0=\nabla f(\bar x)+\sum\limits_{i\in I^g(\bar x)}\lambda_i\nabla g_i(\bar x)
										+\sum\limits_{j\in \mathcal P}\rho_j\nabla h_j(\bar x)\\
					&\qquad\qquad
						+\sum\limits_{l\in \mathcal Q}\bigl[\mu_l\nabla G_l(\bar x)+\nu_l\nabla H_l(\bar x)\bigr],\\
					&\forall i\in I^g(\bar x)\colon\,\lambda_i\geq 0,\\
					&\forall l\in I^H(\bar x)\colon\,\mu_l=0,\\
					&\forall l\in I^G(\bar x)\colon\,\nu_l=0.
				\end{split}
			\]
    \item \emph{Mordukhovich-stationary (M-stationary)} if it is W-stationary and the associated multipliers additionally satisfy
			\[
				\forall l\in I^{GH}(\bar x)\colon\,\mu_l\nu_l=0.
			\]
    \item \emph{strongly stationary (S-stationary)} if it is W-stationary while the associated multipliers additionally satisfy
			\[
				\forall l\in I^{GH}(\bar x)\colon\,\mu_l=0\,\land\,\nu_l=0.
			\]
	\end{enumerate}
\end{definition}

Clearly, the following implications hold:
\[
	\text{S-stationarity}\;\Longrightarrow\;\text{M-stationarity}\;\Longrightarrow\;\text{W-stationarity.}
\]
Moreover, the KKT conditions of \eqref{eq:MPSC} are equivalent to the S-stationarity conditions from Definition \ref{def:stationarities}.
One may check Figure \ref{fig:stationarities} for a geometric interpretation of the Lagrange multipliers
associated with the switching conditions from $I^{GH}(\bar x)$.
\begin{figure}[h]\centering
\includegraphics[width=4.0cm]{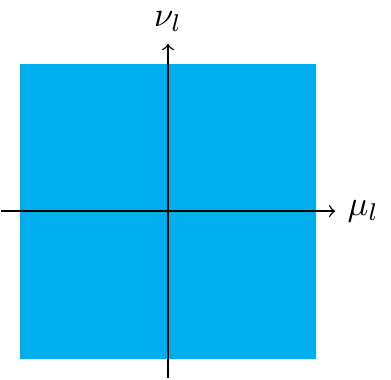}
\includegraphics[width=4.0cm]{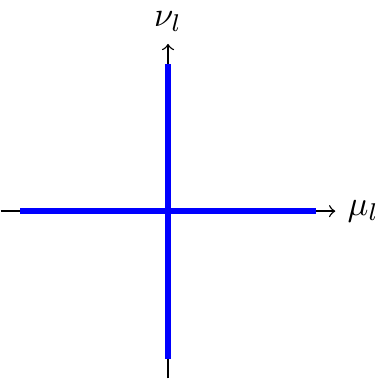}
\includegraphics[width=4.0cm]{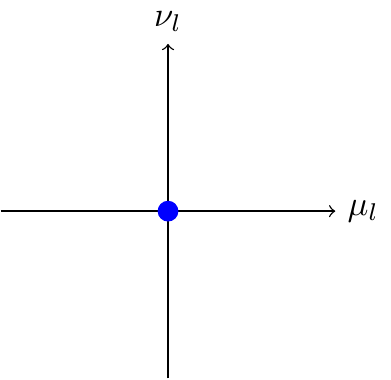}
\caption{Geometric illustrations of weak, M-, and S-stationarity for an index $l\in I^{GH}(\bar x)$.}
\label{fig:stationarities}
\end{figure}

In order to ensure that one of these stationarity notions plays the role of a necessary optimality condition for \eqref{eq:MPSC},
suitable problem-tailored constraint qualifications need to be valid. For the definition of such conditions, the following
so-called \emph{tightened nonlinear problem} is of interest:
\begin{equation}\label{eq:TNLP}\tag{TNLP}
	\begin{aligned}
		f(x)&\,\rightarrow\,\min\\
		g_i(x)&\,\leq\,0,&\qquad&i\in\mathcal M,&\\
		h_j(x)&\,=\,0,&\qquad&j\in\mathcal P,&\\
		G_l(x)&\,=\,0,&&l\in I^{G}(\bar x)\cup I^{GH}(\bar x),&\\
		H_l(x)&\,=\,0,&&l\in I^{H}(\bar x)\cup I^{GH}(\bar x).
	\end{aligned}
\end{equation}
Note that \eqref{eq:TNLP} is a standard nonlinear program.
\begin{definition}\label{def:MPSC_CQs}
	Let $\bar x\in X$ be a feasible point of \eqref{eq:MPSC}. Then \emph{MPSC-LICQ (MPSC-MFCQ)} is said to hold for \eqref{eq:MPSC} at
	$\bar x$ if LICQ (MFCQ) holds for \eqref{eq:TNLP} at $\bar x$, i.e., if the vectors
	\[
		\begin{split}
			\{\nabla g_i(\bar x)\,|\,i\in I^g(\bar x)\}\cup
				\Bigl[	&\{\nabla h_j(\bar x)\,|\,j\in\mathcal P\}\\
						&\cup\{\nabla G_l(\bar x)\,|\,l\in I^G(\bar x)\cup I^{GH}(\bar x)\}\\
						&\cup\{\nabla H_l(\bar x)\,|\,l\in I^H(\bar x)\cup I^{GH}(\bar x)\}\Bigr]
		\end{split}
	\]
	are linearly independent (positive-linearly independent).
\end{definition}

It is obvious that MPSC-LICQ is stronger than MPSC-MFCQ. 
Furthermore, MPSC-LICQ implies standard GCQ for \eqref{eq:MPSC} at the reference point.

In this paper, we will use another MPSC-tailored constraint qualification called MPSC-NNAMCQ, where NNAMCQ stands for the 
\emph{No Nonzero Abnormal Multiplier Constraint Qualification} which has been introduced to investigate optimization problems 
whose feasible sets are preimages of closed but not necessarily convex sets under continuously differentiable mappings, 
see \cite[Section~6.D]{RockafellarWets1998} and \cite{YeYe1997}. 
Clearly, \eqref{eq:MPSC} belongs to this problem class as well if one reformulates the switching constraints as
\begin{equation*}
    (G_l(x),H_l(x))\in C,\qquad l\in\mathcal Q,
\end{equation*}
with $C:=\{(a,b)\in\R^2\,|\,ab=0\}$.
\begin{definition}\label{def:MPSC_NNAMCQ}
	Let $\bar x\in X$ be a feasible point of \eqref{eq:MPSC}. Then \emph{MPSC-NNAMCQ} is said to hold for \eqref{eq:MPSC} at $\bar x$ if the following condition is valid:
	\[
		\left.
			\begin{aligned}
				&0=\sum\limits_{i\in I^g(\bar x)}\lambda_i\nabla g_i(\bar x)
					+\sum\limits_{j\in\mathcal P}\rho_j\nabla h_j(\bar x)\\
				&\qquad\qquad +\sum\limits_{l\in\mathcal Q}
					\Bigl[
						\mu_l\nabla G_l(\bar x)	+\nu_l\nabla H_l(\bar x)
					\Bigr]\\
				&\forall i\in I^{g}(\bar x)\colon\,\lambda_i\geq 0,\\
				&\forall l\in I^H(\bar x)\colon\,\mu_l=0,\\
				&\forall l\in I^{G}(\bar x)\colon\,\nu_l=0,\\
				&\forall l\in I^{GH}(\bar x)\colon\,\mu_l\nu_l=0
			\end{aligned}
		\right\}
		\,
		\Longrightarrow
		\,
		\lambda=0,\,\rho=0,\,\mu=0,\,\nu=0.
	\]
\end{definition}

It is easy to check that MPSC-NNAMCQ is implied by MPSC-MFCQ.
Note that a local minimizer of \eqref{eq:MPSC} where
MPSC-LICQ holds is an S-stationary point. 
Furthermore, one can easily check that the associated multipliers which solve the system of S-stationarity are
uniquely determined. Under MPSC-MFCQ (and, thus, MPSC-NNAMCQ), 
a local minimizer of \eqref{eq:MPSC} is, in general,
only M-stationary. Finally, there exist several problem-tailored constraint qualifications for \eqref{eq:MPSC} which are
weaker than MPSC-MFCQ but also imply that local solutions are M-stationary, see \cite{Mehlitz2018}.

\section{The relaxation scheme and its convergence properties}\label{sec:relaxation_scheme_KS}
\subsection{On the relaxation scheme}\label{sec:relaxation}

For our relaxation approach, we will make use of the function $\varphi\colon\R^2\to\R$ defined below:
\[
	\forall (a,b)\in\R^2\colon
	\quad
	\varphi(a,b):=
		\begin{cases}
			ab					&\text{if }a+b\geq 0,\\
			-\tfrac12(a^2+b^2)	&\text{if }a+b<0.
		\end{cases}
\]
The function $\varphi$ was introduced in \cite{KanzowSchwartz2013}
to study a relaxation method for the numerical solution of MPCCs. In the following lemma,
which parallels \cite[Lemma~3.1]{KanzowSchwartz2013}, some properties of $\varphi$ are
summarized.
\begin{lemma}\label{lem:properties_of_varphi}
	\begin{enumerate}
		\item[(a)] The function $\varphi$ is an NCP-function, i.e.\ it satisfies
		\[
			\forall (a,b)\in\R^2\colon
			\quad 
			\varphi(a,b)=0\,\Longleftrightarrow\,a\geq 0\,\land\,b\geq 0\,\land ab=0.
		\]
		\item[(b)] The function $\varphi$ is continuously differentiable and satisfies
		\[
			\forall (a,b)\in\R^2\colon
			\quad
			\nabla\varphi(a,b)=
				\begin{cases}
					\begin{pmatrix}
						b\\a
					\end{pmatrix}
						&\text{if }a+b\geq 0,\\
					\begin{pmatrix}
						-a\\-b
					\end{pmatrix}
						&\text{if }a+b<0.
				\end{cases}
		\]
	\end{enumerate}
\end{lemma}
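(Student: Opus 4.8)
The plan is to dispose of part~(a) by a direct case distinction according to the sign of $a+b$, and to handle part~(b) by treating the two open half-planes separately and then verifying that the two branches glue together in a $C^1$ fashion along the line $a+b=0$.

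For part~(a), I would prove both implications. For the direction ``$\Leftarrow$'', if $a\geq 0$, $b\geq 0$, and $ab=0$, then $a+b\geq 0$, so by definition $\varphi(a,b)=ab=0$. For ``$\Rightarrow$'', suppose $\varphi(a,b)=0$. If $a+b<0$, then $\varphi(a,b)=-\tfrac12(a^2+b^2)$, which vanishes only for $a=b=0$, contradicting $a+b<0$; hence this branch is impossible. Thus $a+b\geq 0$ and $\varphi(a,b)=ab=0$, so one of $a,b$ equals zero, whence the other is nonnegative because $a+b\geq 0$. This yields $a\geq 0$, $b\geq 0$, and $ab=0$, as claimed.

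For part~(b), on the open half-plane $\{a+b>0\}$ the function equals the polynomial $ab$, and on $\{a+b<0\}$ it equals $-\tfrac12(a^2+b^2)$; both are smooth there with exactly the gradients asserted in the statement. The only issue is the behaviour on the boundary line $L:=\{(a,-a)\mid a\in\R\}$. I would first check continuity of $\varphi$ across $L$: at $(a_0,-a_0)$ the value $ab=-a_0^2$ coming from the first branch agrees with $-\tfrac12(a_0^2+a_0^2)=-a_0^2$ from the second. Likewise, the two candidate gradients agree on $L$, since $(b,a)^\top$ and $(-a,-b)^\top$ both evaluate to $(-a_0,a_0)^\top$ there; consequently the piecewise gradient field displayed in the statement is continuous on all of $\R^2$.

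The remaining, and only slightly delicate, step is to show that the partial derivatives of $\varphi$ genuinely exist along $L$ and coincide with this continuous field. I would verify this by computing the one-sided difference quotients at a point $(a_0,-a_0)$. For the $a$-partial, perturbing to $(a_0+t,-a_0)$ puts us in the branch $a+b\geq 0$ when $t>0$ and in the branch $a+b<0$ when $t<0$; a direct computation shows that both one-sided limits of $\bigl(\varphi(a_0+t,-a_0)-\varphi(a_0,-a_0)\bigr)/t$ equal $-a_0=b$, and the symmetric computation gives $a$ for the $b$-partial. Hence the partial derivatives exist everywhere and equal the continuous field described above. Invoking the standard criterion that continuity of the (everywhere-existing) partial derivatives implies continuous (Fr\'echet) differentiability then completes part~(b). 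I expect the one-sided derivative computation along $L$ to be the main, though entirely routine, obstacle, as it is the only place where the two branches genuinely interact.
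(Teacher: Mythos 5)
Your proof is correct and complete: the case analysis for (a) and the branch-wise computation plus the one-sided difference quotients along the line $a+b=0$ for (b) are exactly the standard verification. The paper itself gives no proof of this lemma, deferring to Lemma~3.1 of Kanzow and Schwartz (2013), where the argument is the same direct check you carried out, so there is nothing to add.
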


For some parameter $t\geq 0$ as well as indices $s\in\mathcal S:=\{1,2,3,4\}$ and $l\in\mathcal Q$, 
we define functions $\Phi^s_l(\cdot;t)\colon\R^n\to\R$ via
\[
	\begin{aligned}
			\Phi^1_l(x;t)&:=\varphi(G_l(x)-t,H_l(x)-t),&\quad
			\Phi^2_l(x;t)&:=\varphi(-G_l(x)-t,H_l(x)-t),&\\
			\Phi^3_l(x;t)&:=\varphi(-G_l(x)-t,-H_l(x)-t),&\quad
			\Phi^4_l(x;t)&:=\varphi(G_l(x)-t,-H_l(x)-t)&
	\end{aligned}
\]
for any $x\in\R^n$.
Now, we are in position to introduce the surrogate problem of our interest:
\begin{equation}\label{eq:relaxedMPSC}\tag{P$(t)$}
	\begin{aligned}
		f(x)&\,\rightarrow\,\min&&&\\
		g_i(x)&\,\leq\,0&\qquad&i\in\mathcal M&\\
		h_j(x)&\,=\,0&&j\in\mathcal P&\\
		\Phi^s_l(x;t)&\,\leq\,0&&s\in\mathcal S,\,l\in\mathcal Q.
	\end{aligned}
\end{equation}
The feasible set of \eqref{eq:relaxedMPSC} will be denoted by $X(t)$. 
Figure \ref{fig:relaxed_feasible_set} provides an illustration of $X(t)$.

\begin{figure}[h]\centering
\includegraphics[width=4.0cm]{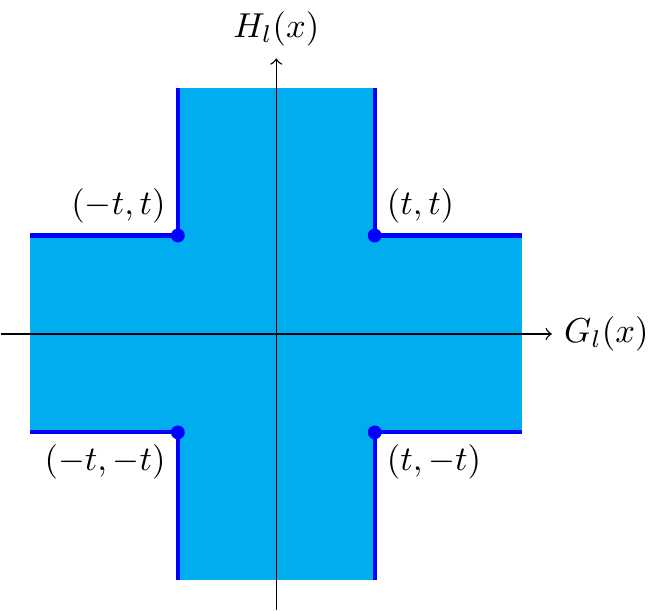}
\caption{Geometric interpretation of the relaxed feasible set $X(t)$.}
\label{fig:relaxed_feasible_set}
\end{figure}

Let $x\in X(t)$ be a feasible point of \eqref{eq:relaxedMPSC} for some fixed
parameter $t>0$. Later on, it will be beneficial to work with the index sets
defined below:
\begin{align*}
    I^{00}_{t,1}(x)&:=\{l\in\mathcal Q\,|\,G_l(x)=t,\,H_l(x)=t\},& 
        I^{00}_{t,2}(x)&:=\{l\in\mathcal Q\,|\,G_l(x)=-t,\,H_l(x)=t\},&\\
    I^{0+}_{t,1}(x)&:=\{l\in\mathcal Q\,|\,G_l(x)=t,\,H_l(x)>t\},&
        I^{0+}_{t,2}(x)&:=\{l\in\mathcal Q\,|\,G_l(x)=-t,\,H_l(x)>t\},&\\
    I^{+0}_{t,1}(x)&:=\{l\in\mathcal Q\,|\,G_l(x)>t,\,H_l(x)=t\},&
        I^{-0}_{t,2}(x)&:=\{l\in\mathcal Q\,|\,G_l(x)<-t,\,H_l(x)=t\},&\\
    I^{00}_{t,3}(x)&:=\{l\in\mathcal Q\,|\,G_l(x)=-t,\,H_l(x)=-t\},&
        I^{00}_{t,4}(x)&:=\{l\in\mathcal Q\,|\,G_l(x)=t,\,H_l(x)=-t\},&\\
    I^{0-}_{t,3}(x)&:=\{l\in\mathcal Q\,|\,G_l(x)=-t,\,H_l(x)<-t\},&
        I^{0-}_{t,4}(x)&:=\{l\in\mathcal Q\,|\,G_l(x)=t,\,H_l(x)<-t\},&\\
    I^{-0}_{t,3}(x)&:=\{l\in\mathcal Q\,|\,G_l(x)<-t,\,H_l(x)=-t\},&
        I^{+0}_{t,4}(x)&:=\{l\in\mathcal Q\,|\,G_l(x)>t,\,H_l(x)=-t\}.&
\end{align*}
Note that all these sets are pairwise disjoint. 
The index sets $I_{t,1}^{00}(x)$, $I_{t,1}^{0+}(x)$, and $I_{t,1}^{+0}(x)$ subsume the three possible cases 
where the constraints $\Phi_l^1(x;t)\le 0$ ($l\in \mathcal{Q}$) are active. 
Similarly, the other index sets cover those indices where the constraints $\Phi_l^s(x;t)\le 0$ ($l\in\mathcal{Q}$, $s\in\{2,3,4\}$) are active. 
It follows that an index $l\in\mathcal{Q}$ which does not belong to any of the above sets 
is inactive for \eqref{eq:relaxedMPSC} and can therefore be disregarded (locally).

In order to address any of the four quadrants separately, we will exploit
\begin{align*}
    I^0_{t,1}(x)&:=I^{00}_{t,1}(x)\cup I^{0+}_{t,1}(x)\cup I^{+0}_{t,1}(x),&
    I^0_{t,2}(x)&:=I^{00}_{t,2}(x)\cup I^{0+}_{t,2}(x)\cup I^{-0}_{t,2}(x),\\
    I^0_{t,3}(x)&:=I^{00}_{t,3}(x)\cup I^{0-}_{t,3}(x)\cup I^{-0}_{t,3}(x),&
    I^0_{t,4}(x)&:=I^{00}_{t,4}(x)\cup I^{0-}_{t,4}(x)\cup I^{+0}_{t,4}(x),
\end{align*}
i.e., for fixed $s\in\mathcal S$, $I_{t,s}^0(x)$ collects all indices $l\in\mathcal{Q}$ where the constraint $\Phi_l^s(x;t)\leq 0$ is active.

For brevity, we set
\begin{align*}
    I^{00}_t(x)&:=\bigcup_{s\in \mathcal S}I^{00}_{t,s}(x),\\
    I^{0\pm}_t(x)&:=I^{0+}_{t,1}(x)\cup I^{0+}_{t,2}(x)\cup I^{0-}_{t,3}(x)\cup I^{0-}_{t,4}(x),\\
    I^{\pm 0}_t(x)&:=I^{+0}_{t,1}(x)\cup I^{-0}_{t,2}(x)\cup I^{-0}_{t,3}(x)\cup I^{+0}_{t,4}(x).
\end{align*}
Thus, we collect all indices in $I^{0\pm}_t(x)$ where $G_l(x)\in\{-t,t\}$ holds
while $|H_l(x)|>t$ is valid. Similarly, $I^{\pm 0}_t(x)$ comprises all indices
where $H_l(x)\in\{-t,t\}$ and $|G_l(x)|>t$ hold true. The set $I^{00}_t(x)$
contains all those indices where $G_l(x),H_l(x)\in\{-t,t\}$ is valid.

If $\bar x\in X$ is feasible to \eqref{eq:MPSC}, $x$ lies in a sufficiently small
neighborhood of $\bar x$, and $t\geq 0$ is sufficiently small, then the following
inclusions follow from the continuity of all appearing functions:
\begin{equation}\label{eq:upper_estimate_index_sets}
	\begin{split}
		I^{00}_{t}(x)\cup I^{0\pm}_{t}(x)
		&\subset I^{G}(\bar x)\cup I^{GH}(\bar x),\\
		I^{00}_{t}(x)\cup I^{\pm 0}_t(x)
		&\subset I^{H}(\bar x)\cup I^{GH}(\bar x).
	\end{split}
\end{equation}
In the lemma below, we present explicit formulas for the gradients of $\Phi^s_l(\cdot;t)$ 
with $l\in\mathcal Q$ and $s\in\mathcal S$. They can be derived exploiting Lemma \ref{lem:properties_of_varphi}
as well as the chain rule.
\begin{lemma}\label{lem:derivative_of_Phi}
	For $x\in\R^n$, $t>0$, and $l\in\mathcal Q$, the following formulas are valid:
\begin{align*}
			\nabla_x\Phi^1_l(x;t)&=
				\begin{cases}
					(H_l(x)-t)\nabla G_l(x)+(G_l(x)-t)\nabla H_l(x)	
						&\text{if }G_l(x)+H_l(x)\geq 2t,\\
					-(G_l(x)-t)\nabla G_l(x)-(H_l(x)-t)\nabla H_l(x)
						&\text{if }G_l(x)+H_l(x)<2t,
				\end{cases}\\
			\nabla_x\Phi^2_l(x;t)&=
				\begin{cases}
					(t-H_l(x))\nabla G_l(x)-(G_l(x)+t)\nabla H_l(x)	
						&\text{if }-G_l(x)+H_l(x)\geq 2t,\\
					-(G_l(x)+t)\nabla G_l(x)-(H_l(x)-t)\nabla H_l(x)
						&\text{if }-G_l(x)+H_l(x)<2t,
				\end{cases}\\
			\nabla_x\Phi^3_l(x;t)&=
				\begin{cases}
					(H_l(x)+t)\nabla G_l(x)+(G_l(x)+t)\nabla H_l(x)	
						&\text{if }-G_l(x)-H_l(x)\geq 2t,\\
					-(G_l(x)+t)\nabla G_l(x)-(H_l(x)+t)\nabla H_l(x)
						&\text{if }-G_l(x)-H_l(x)<2t,
				\end{cases}\\
			\nabla_x\Phi^4_l(x;t)&=
				\begin{cases}
					-(H_l(x)+t)\nabla G_l(x)+(t-G_l(x))\nabla H_l(x)	
						&\text{if }G_l(x)-H_l(x)\geq 2t,\\
					-(G_l(x)-t)\nabla G_l(x)-(H_l(x)+t)\nabla H_l(x)
						&\text{if }G_l(x)-H_l(x)<2t.
				\end{cases}
\end{align*}
	Particularly, we have
\begin{align*}
			&\forall l\in I^{0}_{t,1}(x)\colon\quad&
				\nabla_x\Phi^1_l(x;t)&=
					\begin{cases}
						(G_l(x)-t)\nabla H_l(x) & \text{if }l\in I^{+0}_{t,1}(x),\\
						(H_l(x)-t)\nabla G_l(x) & \text{if }l\in I^{0+}_{t,1}(x),\\
						0						& \text{if }l\in I^{00}_{t,1}(x),
					\end{cases}&\\
			&\forall l\in I^{0}_{t,2}(x)\colon\quad&
				\nabla_x\Phi^2_l(x;t)&=
					\begin{cases}
						-(G_l(x)+t)\nabla H_l(x) & \text{if }l\in I^{-0}_{t,2}(x),\\
						(t-H_l(x))\nabla G_l(x) & \text{if }l\in I^{0+}_{t,2}(x),\\
						0						& \text{if }l\in I^{00}_{t,2}(x),
					\end{cases}&\\
			&\forall l\in I^{0}_{t,3}(x)\colon\quad&
				\nabla_x\Phi^3_l(x;t)&=
					\begin{cases}
						(G_l(x)+t)\nabla H_l(x) & \text{if }l\in I^{-0}_{t,3}(x),\\
						(H_l(x)+t)\nabla G_l(x) & \text{if }l\in I^{0-}_{t,3}(x),\\
						0						& \text{if }l\in I^{00}_{t,3}(x),
					\end{cases}&\\
			&\forall l\in I^{0}_{t,4}(x)\colon\quad&
				\nabla_x\Phi^4_l(x;t)&=
					\begin{cases}
						(t-G_l(x))\nabla H_l(x) & \text{if }l\in I^{+0}_{t,4}(x),\\
						-(H_l(x)+t)\nabla G_l(x) & \text{if }l\in I^{0-}_{t,4}(x),\\
						0						& \text{if }l\in I^{00}_{t,4}(x).
					\end{cases}&
\end{align*}
\end{lemma}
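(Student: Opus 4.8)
The plan is to compute each gradient by the chain rule, reading the two partial derivatives of $\varphi$ off Lemma~\ref{lem:properties_of_varphi}(b); I would carry out $\Phi^1_l$ in full and then indicate how the other three reduce to it after sign changes in the inner arguments. For $\Phi^1_l(\cdot;t)=\varphi(a,b)$ with $a:=G_l(x)-t$ and $b:=H_l(x)-t$ one has $\nabla_x a=\nabla G_l(x)$ and $\nabla_x b=\nabla H_l(x)$, so the chain rule yields
\[
  \nabla_x\Phi^1_l(x;t)
    =\frac{\partial\varphi}{\partial a}(a,b)\,\nabla G_l(x)
      +\frac{\partial\varphi}{\partial b}(a,b)\,\nabla H_l(x).
\]
By Lemma~\ref{lem:properties_of_varphi}(b) the branch $a+b\ge 0$, which is precisely $G_l(x)+H_l(x)\ge 2t$, gives $\partial\varphi/\partial a=b$ and $\partial\varphi/\partial b=a$ and hence the first line of the formula; the complementary branch $a+b<0$ gives $\partial\varphi/\partial a=-a$ and $\partial\varphi/\partial b=-b$ and hence the second line. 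For $\Phi^2_l,\Phi^3_l,\Phi^4_l$ I would repeat the computation with $a=\pm G_l(x)-t$ and $b=\pm H_l(x)-t$, noting that flipping the sign of $G_l$ (resp.\ $H_l$) flips $\nabla_x a$ (resp.\ $\nabla_x b$), and rewriting the branch condition $a+b\ge 0$ as the stated inequality: $-G_l(x)+H_l(x)\ge 2t$ for $s=2$, $-G_l(x)-H_l(x)\ge 2t$ for $s=3$, and $G_l(x)-H_l(x)\ge 2t$ for $s=4$. Substituting the relevant partials then produces the four displayed formulas.

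The specialized formulas (the second group) follow by inserting the defining equalities of each active index set into the expressions just obtained. On each such set the governing sum of arguments is at least $2t$ -- for instance $G_l(x)+H_l(x)>2t$ on $I^{+0}_{t,1}(x)$ and on $I^{0+}_{t,1}(x)$, and $G_l(x)+H_l(x)=2t$ on $I^{00}_{t,1}(x)$ -- so one is always in the first (product) branch of $\nabla\varphi$, and it only remains to see which coefficient vanishes. On $I^{+0}_{t,1}(x)$ we have $H_l(x)=t$, so the coefficient $H_l(x)-t$ of $\nabla G_l(x)$ is zero; on $I^{0+}_{t,1}(x)$ we have $G_l(x)=t$, annihilating the $\nabla H_l(x)$ term; and on $I^{00}_{t,1}(x)$ both coefficients vanish, giving $\nabla_x\Phi^1_l(x;t)=0$. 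The cases $s\in\{2,3,4\}$ are handled in exactly the same way.

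Since the whole argument is a chain-rule computation, the only genuine difficulty is bookkeeping: one has to keep the sign flips in $\nabla_x a$ and $\nabla_x b$ consistent with the sign flips in the branch condition, and to confirm that each active index set really selects the product branch $a+b\ge 0$ rather than the quadratic branch. The latter is the point I would check most carefully, but it causes no trouble here because every active set forces the relevant sum of arguments to be at least $2t$; consequently no case is ever assigned the wrong gradient and the lemma follows.
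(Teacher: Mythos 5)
Your proposal is correct and follows exactly the route the paper intends: the paper gives no written proof of this lemma, stating only that the formulas "can be derived exploiting Lemma~\ref{lem:properties_of_varphi} as well as the chain rule," which is precisely your chain-rule computation together with the observation that every active index set forces the argument sum to be at least $2t$ and hence selects the product branch. All sign bookkeeping in your worked cases checks out.
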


As a corollary of the above lemma, we obtain an explicit formula for the linearization cone
associated with \eqref{eq:relaxedMPSC}.
\begin{corollary}\label{cor:linearization_cone_relaxedMPSC}
	Fix $t>0$ and a feasible point $x\in X(t)$ of \eqref{eq:relaxedMPSC}.
	Then the following formula is valid:
	\[
		\mathcal L_{X(t)}(x)
		=
		\left\{
			d\in\R^n\,\middle|\,
			\begin{aligned}
				\nabla g_i(x)\cdot d	&\,\leq\,0&		&i\in I^g(x)\\
				\nabla h_j(x)\cdot d	&\,\leq\,0&		&j\in\mathcal P\\
				\nabla G_l(x)\cdot d	&\,\leq\,0&		&l\in I^{0+}_{t,1}(x)\cup I^{0-}_{t,4}(x)\\
				\nabla G_l(x)\cdot d	&\,\geq\,0&		&l\in I^{0+}_{t,2}(x)\cup I^{0-}_{t,3}(x)\\
				\nabla H_l(x)\cdot d	&\,\leq\,0&		&l\in I^{+0}_{t,1}(x)\cup I^{-0}_{t,2}(x)\\
				\nabla H_l(x)\cdot d	&\,\geq\,0&		&l\in I^{-0}_{t,3}(x)\cup I^{+0}_{t,4}(x)
			\end{aligned}
		\right\}.
	\]
\end{corollary}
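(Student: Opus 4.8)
The plan is to invoke the definition of the linearization cone of the standard nonlinear program \eqref{eq:relaxedMPSC} and then to simplify the contribution of the switching constraints $\Phi^s_l(\cdot;t)\le 0$ by means of the explicit gradient formulas in Lemma \ref{lem:derivative_of_Phi}. By definition, $\mathcal L_{X(t)}(x)$ consists of all $d\in\R^n$ satisfying $\nabla g_i(x)\cdot d\le 0$ for the active inequalities $i\in I^g(x)$, the equality conditions associated with $j\in\mathcal P$, and $\nabla_x\Phi^s_l(x;t)\cdot d\le 0$ for every pair $(s,l)$ at which the constraint $\Phi^s_l(\cdot;t)\le 0$ is active. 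The conditions involving $\nabla g_i$ and $\nabla h_j$ are inherited verbatim from the standard linearization cone, so the whole task reduces to rewriting the switching part.

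First I would identify exactly which constraints $\Phi^s_l(\cdot;t)\le 0$ are active at $x$. Since $\varphi$ is an NCP-function by Lemma \ref{lem:properties_of_varphi}(a), the equality $\Phi^1_l(x;t)=\varphi(G_l(x)-t,H_l(x)-t)=0$ holds if and only if $G_l(x)\ge t$, $H_l(x)\ge t$, and $(G_l(x)-t)(H_l(x)-t)=0$; these are precisely the three cases collected in $I^0_{t,1}(x)=I^{00}_{t,1}(x)\cup I^{0+}_{t,1}(x)\cup I^{+0}_{t,1}(x)$. The analogous computation for $s\in\{2,3,4\}$ shows that $\Phi^s_l(\cdot;t)$ is active exactly for $l\in I^0_{t,s}(x)$, whence only the listed index sets contribute and every index $l$ lying in none of them can be disregarded.

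The decisive step is to exploit the explicit gradient formulas for active indices provided in Lemma \ref{lem:derivative_of_Phi}. For each active index the gradient $\nabla_x\Phi^s_l(x;t)$ is a scalar multiple of either $\nabla G_l(x)$ or $\nabla H_l(x)$ (or vanishes), and the scalar prefactor has a definite sign dictated by the defining inequalities of the relevant index set. For instance, for $l\in I^{+0}_{t,1}(x)$ one has $\nabla_x\Phi^1_l(x;t)=(G_l(x)-t)\nabla H_l(x)$ with $G_l(x)-t>0$, so $\nabla_x\Phi^1_l(x;t)\cdot d\le 0$ is equivalent to $\nabla H_l(x)\cdot d\le 0$; for $l\in I^{0+}_{t,2}(x)$ one has $\nabla_x\Phi^2_l(x;t)=(t-H_l(x))\nabla G_l(x)$ with $t-H_l(x)<0$, so dividing by the negative prefactor reverses the sense of the inequality and yields $\nabla G_l(x)\cdot d\ge 0$. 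Carrying this out across all twelve active-index sets, and noting that for $l\in I^{00}_{t,s}(x)$ the gradient vanishes and thus imposes no restriction on $d$, reproduces exactly the six lines of switching conditions in the asserted formula.

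This argument involves no genuine difficulty beyond careful bookkeeping: the only thing to monitor is the direction of each inequality, which is preserved when the prefactor is strictly positive and reversed when it is strictly negative. I therefore expect the main (purely organizational) obstacle to be keeping the correspondence between the twelve active-index sets, their sign conditions, and the resulting $\le$/$\ge$ constraints on $\nabla G_l(x)\cdot d$ and $\nabla H_l(x)\cdot d$ consistent; once these signs are tabulated, the claimed identity for $\mathcal L_{X(t)}(x)$ follows immediately.
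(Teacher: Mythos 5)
Your proposal is correct and follows exactly the route the paper intends: the corollary is stated there without proof as an immediate consequence of Lemma~\ref{lem:derivative_of_Phi}, and your case-by-case identification of the active index sets via the NCP-property of $\varphi$ followed by division by the sign-definite scalar prefactors (with the vanishing gradients for $l\in I^{00}_{t,s}(x)$ imposing no restriction) is precisely the omitted bookkeeping. Note only that you correctly state the $h_j$-conditions as equalities, consistent with the definition of the linearization cone and with the analogous formula used later in the proof of Theorem~\ref{thm:MPSC_LICQ_yields_GCQ}, whereas the corollary as printed has a ``$\leq$'' there, which appears to be a typo in the paper rather than a defect in your argument.
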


The upcoming lemma justifies that \eqref{eq:relaxedMPSC} is indeed a \emph{relaxation} of \eqref{eq:MPSC}. Its
proof parallels the one of \cite[Lemma~3.2]{KanzowSchwartz2013}.
\begin{lemma}\label{lem:properties_of_relaxed_feasible_set}
	The following statements hold:
	\begin{enumerate}
		\item[\textup{(P1)}] $X(0)=X$,
		\item[\textup{(P2)}] $0\leq t_1\leq t_2\,\Longrightarrow\,X(t_1)\subset X(t_2)$,
		\item[\textup{(P3)}] $\bigcap_{t> 0}X(t)=X$.
	\end{enumerate}
\end{lemma}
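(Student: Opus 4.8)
The plan is to reduce all three claims to a single clean pointwise description of the relaxed switching region, and then read off (P1)--(P3) directly. Fix $l\in\mathcal Q$ and abbreviate $a:=G_l(x)$, $b:=H_l(x)$. First I would establish the elementary fact that, for any $(u,v)\in\R^2$, one has $\varphi(u,v)\leq 0$ if and only if $u\leq 0$ or $v\leq 0$; equivalently, $\{\varphi>0\}$ is exactly the open first quadrant $\{u>0,\,v>0\}$. This is immediate from the definition of $\varphi$: on the branch $u+v<0$ one always has $\varphi=-\tfrac12(u^2+v^2)\leq 0$, while on the branch $u+v\geq 0$ one has $\varphi=uv$, which is positive precisely when $u$ and $v$ share the same nonzero sign, and then $u+v\geq 0$ forces both to be positive.

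Feeding the four shifted arguments from the definition of $\Phi^s_l(\cdot;t)$ into this fact, the four constraints $\Phi^s_l(x;t)\leq 0$ ($s\in\mathcal S$) translate into the disjunctions $a\leq t\lor b\leq t$, $a\geq -t\lor b\leq t$, $a\geq -t\lor b\geq -t$, and $a\leq t\lor b\geq -t$. Their negations are exactly the four open ``corner'' regions $\{a>t,\,b>t\}$, $\{a<-t,\,b>t\}$, $\{a<-t,\,b<-t\}$, $\{a>t,\,b<-t\}$, whose union is precisely $\{|a|>t\,\land\,|b|>t\}$. Consequently, all four constraints hold simultaneously if and only if $|G_l(x)|\leq t$ or $|H_l(x)|\leq t$. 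This yields the working description
\[
	X(t)=\left\{
		x\in\R^n\,\middle|\,
		g(x)\leq 0,\ h(x)=0,\ \forall l\in\mathcal Q\colon\,|G_l(x)|\leq t\lor|H_l(x)|\leq t
	\right\}.
\]

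With this description in hand, the three assertions are routine. For (P1), setting $t=0$ turns each disjunction into $G_l(x)=0\lor H_l(x)=0$, i.e.\ $G_l(x)H_l(x)=0$, which is exactly the switching constraint of \eqref{eq:MPSC}, so $X(0)=X$. For (P2), if $0\leq t_1\leq t_2$ and $x\in X(t_1)$, the shared constraints on $g$ and $h$ are unaffected, and $|G_l(x)|\leq t_1\leq t_2$ (respectively $|H_l(x)|\leq t_1\leq t_2$) shows that the level-$t_1$ disjunction implies the level-$t_2$ one; hence $X(t_1)\subset X(t_2)$. For (P3), the inclusion $X\subset\bigcap_{t>0}X(t)$ follows from (P1) and (P2) since $X=X(0)\subset X(t)$ for every $t>0$. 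For the reverse inclusion, I would take $x\in\bigcap_{t>0}X(t)$; the constraints on $g,h$ hold, and for each $l$ I argue by contradiction: if $G_l(x)\neq 0$ and $H_l(x)\neq 0$, then the choice $t:=\tfrac12\min\{|G_l(x)|,|H_l(x)|\}>0$ violates the disjunction $|G_l(x)|\leq t\lor|H_l(x)|\leq t$, contradicting $x\in X(t)$.

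The only step requiring genuine care is the first one, and within it the bookkeeping that the four negated constraints fit together into the single region $\{|a|>t\,\land\,|b|>t\}$; everything afterwards is short disjunction-tracking. I expect no real obstacle beyond verifying the sign cases of $\varphi$ cleanly and handling the boundary cases (e.g.\ $a=t$) so that the closed inequalities $|G_l(x)|\leq t$ emerge correctly.
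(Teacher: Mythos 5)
Your proposal is correct, and it takes essentially the route the paper has in mind: the paper omits the proof with a reference to the analogous MPCC result of Kanzow and Schwartz, and the core of that argument is exactly your observation that $\{(u,v)\,|\,\varphi(u,v)\leq 0\}$ is the complement of the open first quadrant, which turns the four constraints $\Phi^s_l(x;t)\leq 0$ into the description $|G_l(x)|\leq t\ \lor\ |H_l(x)|\leq t$ of the cross-shaped region in Figure~\ref{fig:relaxed_feasible_set}. All sign cases and the limiting argument for (P3) check out.
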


Now, we are in position to characterize a conceptual method for the numerical solution of \eqref{eq:MPSC}: 
First, a sequence $\{t_k\}_{k\in\N}\subset\R_+$ of positive relaxation parameters is chosen which converges
to zero. Next, one solves the surrogate problem \hyperref[eq:relaxedMPSC]{(P$(t_k)$)} via standard methods.
If one computes a (local) minimizer of one of these surrogate problems which is feasible to \eqref{eq:MPSC},
then it is already a (local) minimizer of \eqref{eq:MPSC}. In general, it will be only possible to compute
KKT points of the surrogate problem. However, if such a sequence of KKT points converges to some point 
$\bar x\in\R^n$, then this point must be feasible to \eqref{eq:MPSC} by construction. Furthermore, it
will be shown in Section \ref{sec:convergence_properties} that whenever MPSC-NNAMCQ is valid at $\bar x$, then
it is an M-stationary point of \eqref{eq:MPSC}.

\subsection{Convergence properties}\label{sec:convergence_properties}

In this section, we analyze the theoretical properties of our relaxation scheme. 
In order to do so, we fix some standing assumptions below.
\begin{assumption}\label{ass:convergence_analysis}
	Let $\{t_k\}_{k\in\N}\subset\R_+$ be a sequence of positive relaxation parameters converging to zero.
	For each $k\in\N$, let $x_k\in X(t_k)$ be a KKT point of \hyperref[eq:relaxedMPSC]{\textup{(P$(t_k)$)}}. 
	We assume that $\{x_k\}_{k\in\N}$ converges to $\bar x\in \R^n$. 
	Note that $\bar x\in X$ holds by Lemma~\ref{lem:properties_of_relaxed_feasible_set}.
\end{assumption}

First, we will show that whenever MPSC-NNAMCQ is valid at $\bar x$, then it is an M-stationary point
of \eqref{eq:MPSC}. Second, it will be demonstrated that MPSC-LICQ at $\bar x$
implies that GCQ holds for the surrogate problem \eqref{eq:relaxedMPSC} at all feasible points
from a sufficiently small neighborhood of $\bar x$ and sufficiently small $t>0$. This property
ensures that local minima of the surrogate problem \eqref{eq:relaxedMPSC} which are located near $\bar x$
are in fact KKT points. This way, it is shown that Assumption \ref{ass:convergence_analysis} is 
reasonable.

\begin{theorem}\label{thm:convergence_to_M-stationary_point}
	Let Assumption \ref{ass:convergence_analysis} be valid. 
	Suppose that MPSC-NNAMCQ holds at $\bar x$. 
	Then $\bar x$ is an M-stationary point of \eqref{eq:MPSC}.
\end{theorem}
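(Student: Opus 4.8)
The plan is to start from the KKT conditions of each surrogate problem
\hyperref[eq:relaxedMPSC]{(P$(t_k)$)} at the iterate $x_k$, pass to the limit, and
identify the resulting multipliers as those in the W-stationarity system, while
checking that the extra sign/complementarity structure of M-stationarity survives the
limit. For each $k$ there exist multipliers $\lambda^k_i\geq 0$ ($i\in I^g(x_k)$),
$\rho^k_j$ ($j\in\mathcal P$), and nonnegative multipliers $\eta^{k,s}_l\geq 0$ attached to
the active relaxed constraints $\Phi^s_l(x_k;t_k)\leq 0$, so that
\[
	0=\nabla f(x_k)
		+\sum_{i\in I^g(x_k)}\lambda^k_i\nabla g_i(x_k)
		+\sum_{j\in\mathcal P}\rho^k_j\nabla h_j(x_k)
		+\sum_{s\in\mathcal S}\sum_{l\in I^0_{t_k,s}(x_k)}\eta^{k,s}_l\nabla_x\Phi^s_l(x_k;t_k).
\]
The key simplification is \Cref{lem:derivative_of_Phi}: on the active index sets each
$\nabla_x\Phi^s_l(x_k;t_k)$ is either $0$ (for $l\in I^{00}_{t_k,s}$) or a positive scalar
multiple of $\pm\nabla G_l(x_k)$ or $\pm\nabla H_l(x_k)$. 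So after absorbing those positive
scalars into the $\eta$'s, the tail sum collapses into a combination
$\sum_{l}\mu^k_l\nabla G_l(x_k)+\nu^k_l\nabla H_l(x_k)$ with explicitly signed coefficients,
and the contributions from the doubly-active sets $I^{00}_{t_k,s}(x_k)$ simply drop out
because their gradients vanish.

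Next I would define the candidate limiting multiplier vectors. First extend the active
multipliers by zero to inactive indices so that all sums range over fixed index sets, then
assemble $\mu^k_l,\nu^k_l$ from the collapsed expression. The crucial normalization step:
I expect the sequence $(\lambda^k,\rho^k,\mu^k,\nu^k)$ to be bounded, and this is exactly
where MPSC-NNAMCQ enters. Arguing by contradiction, if the norms diverge, divide the KKT
identity by that norm and pass to a subsequential limit; using continuity of all gradients
($x_k\to\bar x$), the inclusions \eqref{eq:upper_estimate_index_sets} to control which
$I^G,I^H,I^{GH}$ each active index can land in, and the sign information just derived, the
limiting normalized multipliers satisfy precisely the premise of MPSC-NNAMCQ
(\Cref{def:MPSC_NNAMCQ}) but are nonzero, contradicting it. Hence the multipliers are
bounded and a convergent subsequence exists with limit $(\lambda,\rho,\mu,\nu)$; passing to
the limit in the KKT identity yields the W-stationarity gradient equation with
$\lambda_i\geq 0$, and the vanishing conditions $\mu_l=0$ on $I^H(\bar x)$, $\nu_l=0$ on
$I^G(\bar x)$ follow because, for small $t$ and $x$ near $\bar x$, no relaxed constraint on
such an $l$ contributes a $\nabla G_l$ or $\nabla H_l$ term of the wrong kind.

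The last and most delicate step is the M-stationarity sign condition
$\mu_l\nu_l=0$ for $l\in I^{GH}(\bar x)$. Here I would track, for each such $l$, which
of the four quadrant constraints $s\in\mathcal S$ is active along the sequence and with
what sign the surviving gradient enters; the geometry encoded in
\Cref{cor:linearization_cone_relaxedMPSC} shows that at any feasible $x_k$ an index cannot
simultaneously force both a genuinely nonzero $G_l$-direction multiplier and a nonzero
$H_l$-direction multiplier of conflicting signs — for a fixed active quadrant at most one of
the two surviving coefficients is present, the other gradient being killed exactly as in
\Cref{lem:derivative_of_Phi}. Combined with the separation $|G_l(x_k)|,|H_l(x_k)|\le t_k\to0$
that pins these indices into $I^{GH}(\bar x)$, the product $\mu^k_l\nu^k_l$ is forced to be
nonpositive (or to vanish) along the sequence, and this sign survives the limit to give
$\mu_l\nu_l=0$. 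I expect this bookkeeping over the quadrant index sets to be the main
obstacle: the argument is not deep but requires carefully matching each active set
$I^{0+}_{t,s},I^{+0}_{t,s},I^{00}_{t,s}$ to the correct sign of the limiting $\mu_l$ or
$\nu_l$, and ruling out the one dangerous configuration in which both would be nonzero with
the same sign. Once that case analysis is complete, W-stationarity plus $\mu_l\nu_l=0$ on
$I^{GH}(\bar x)$ is exactly M-stationarity, completing the proof.
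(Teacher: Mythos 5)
Your overall route is the same as the paper's: pass from the KKT system of \hyperref[eq:relaxedMPSC]{(P$(t_k)$)} to a combination of $\nabla G_l(x_k)$ and $\nabla H_l(x_k)$ via \cref{lem:derivative_of_Phi}, prove boundedness of the aggregated multipliers by contradiction with MPSC-NNAMCQ, and take limits. The setup and the boundedness skeleton are fine. The gap is in your final step, which you yourself flag as the delicate one. You argue that the product $\mu^k_l\nu^k_l$ is ``forced to be nonpositive (or to vanish) along the sequence, and this sign survives the limit to give $\mu_l\nu_l=0$.'' That inference is invalid: $\mu^k_l\nu^k_l\le 0$ for all $k$ only yields $\mu_l\nu_l\le 0$ in the limit, which is strictly weaker than the M-stationarity requirement $\mu_l\nu_l=0$ (for MPSCs, unlike for MPCCs, there is no meaningful ``Clarke-type'' intermediate condition to fall back on, so a sign inequality on the product buys you nothing). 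Your plan to ``rule out the one dangerous configuration in which both would be nonzero with the same sign'' is aimed at the wrong target: you must rule out \emph{both} multipliers being nonzero at all.

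The correct mechanism, which is what the paper uses, is a disjoint-support argument rather than a sign argument. The twelve active index sets $I^{00}_{t,s}$, $I^{0\pm}_{t,s}$, $I^{\pm 0}_{t,s}$ are pairwise disjoint, so each $l$ is active for at most one quadrant $s$, and within that quadrant \cref{lem:derivative_of_Phi} leaves a multiple of exactly one of $\nabla G_l(x_k)$, $\nabla H_l(x_k)$ (or zero on the doubly active set). Consequently $\supp\mu^k\subset I^{0\pm}_{t_k}(x_k)$ and $\supp\nu^k\subset I^{\pm 0}_{t_k}(x_k)$, and these two sets are disjoint, so $\mu^k_l\nu^k_l=0$ \emph{exactly}, for every $k$ and every $l$. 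Since $\{(a,b)\,|\,ab=0\}$ is closed, this passes to the limit and gives $\mu_l\nu_l=0$ on $I^{GH}(\bar x)$. Note also that you need this same fact \emph{inside} the unboundedness contradiction: the premise of MPSC-NNAMCQ in \cref{def:MPSC_NNAMCQ} already contains the condition $\tilde\mu_l\tilde\nu_l=0$ on $I^{GH}(\bar x)$, so ``the sign information just derived'' is not sufficient there either; you must run the disjoint-support argument on the normalized multipliers before you can invoke the constraint qualification. With these two repairs your proof coincides with the paper's.
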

\begin{proof}
	Since $x_k$ is a KKT point of \hyperref[eq:relaxedMPSC]{(P$(t_k)$)}, there exist multipliers
	$\lambda^k\in\R^m$, $\rho^k\in\R^p$, and 
	$\alpha^k,\beta^k,\gamma^k,\delta^k\in\R^q$ which solve the following system:
	\[
		\begin{split}
			&0=\nabla f(x_k)
				+\sum\limits_{i\in I^g(x_k)}\lambda^k_i\nabla g_i(x_k)
				+\sum\limits_{j\in\mathcal P}\rho^k_j\nabla h_j(x_k)\\
			&\qquad\qquad
				+\sum\limits_{l\in\mathcal Q}
					\Bigl[
						\alpha^k_l\nabla_x\Phi^1_l(x_k;t_k)+\beta^k_l\nabla_x\Phi^2_l(x_k;t_k)
						+\gamma^k_l\nabla_x\Phi^3_l(x_k;t_k)+\delta^k_l\nabla_x\Phi^4_l(x_k;t_k)
					\Bigr],\\
			&\forall i\in I^g(x_k)\colon\,\lambda^k_i\geq 0;\,\forall i\in\mathcal M\setminus I^g(x_k)\colon\,\lambda^k_l=0,\\
			&\forall l\in I^{0}_{t_k,1}(x_k)\colon\,\alpha^k_l\geq 0;\,\forall l\in\mathcal Q\setminus I^{0}_{t_k,1}(x_k)\colon\,\alpha^k_l= 0,\\
			&\forall l\in I^{0}_{t_k,2}(x_k)\colon\,\beta^k_l\geq 0;\,\forall l\in\mathcal Q\setminus I^{0}_{t_k,2}(x_k)\colon\,\beta^k_l= 0,\\
			&\forall l\in I^{0}_{t_k,3}(x_k)\colon\,\gamma^k_l\geq 0;\,\forall l\in\mathcal Q\setminus I^{0}_{t_k,3}(x_k)\colon\,\gamma^k_l= 0,\\
			&\forall l\in I^{0}_{t_k,4}(x_k)\colon\,\delta^k_l\geq 0;\,\forall l\in\mathcal Q\setminus I^{0}_{t_k,4}(x_k)\colon\,\delta^k_l= 0.
		\end{split}
	\]
	Next, let us define new multipliers $\alpha_G^k,\alpha_H^k,\beta_G^k,\beta^k_H,\gamma^k_G,\gamma^k_H,\delta^k_G,\delta^k_H\in\R^q$ 
	as stated below for all $l\in\mathcal Q$:
\begin{align*}
			\alpha^k_{G,l}&:=
				\begin{cases}
					\alpha^k_l(H_l(x_k)-t_k)	&l\in I^{0+}_{t_k,1}(x_k),\\
					0							&\text{otherwise,}
				\end{cases}
				&
			\alpha^k_{H,l}&:=
				\begin{cases}
					\alpha^k_l(G_l(x_k)-t_k)	&l\in I^{+0}_{t_k,1}(x_k),\\
					0							&\text{otherwise,}
				\end{cases}
				&\\
			\beta^k_{G,l}&:=
				\begin{cases}
					\beta^k_l(t_k-H_l(x_k))		&l\in I^{0+}_{t_k,2}(x_k),\\
					0							&\text{otherwise,}
				\end{cases}
				&
			\beta^k_{H,l}&:=
				\begin{cases}
					\beta^k_l(-G_l(x_k)-t_k)	&l\in I^{-0}_{t_k,2}(x_k),\\
					0							&\text{otherwise,}
				\end{cases}
				&\\
			\gamma^k_{G,l}&:=
				\begin{cases}
					\gamma^k_l(H_l(x_k)+t_k)	&l\in I^{0-}_{t_k,3}(x_k),\\
					0							&\text{otherwise,}
				\end{cases}
				&
			\gamma^k_{H,l}&:=
				\begin{cases}
					\gamma^k_l(G_l(x_k)+t_k)	&l\in I^{-0}_{t_k,3}(x_k),\\
					0							&\text{otherwise,}
				\end{cases}
				&\\
			\delta^k_{G,l}&:=
				\begin{cases}
					\delta^k_l(-H_l(x_k)-t_k)	&l\in I^{0-}_{t_k,4}(x_k),\\
					0							&\text{otherwise,}
				\end{cases}
				&
			\delta^k_{H,l}&:=
				\begin{cases}
					\delta^k_l(t_k-G_l(x_k))	&l\in I^{+0}_{t_k,4}(x_k),\\
					0							&\text{otherwise.}
				\end{cases}
				&
\end{align*}
	Furthermore, we set $\mu^k:=\alpha^k_G+\beta^k_G+\gamma^k_G+\delta^k_G$ and 
	$\nu^k:=\alpha^k_H+\beta^k_H+\gamma^k_H+\delta^k_H$. 
	By definition, we have $\supp \mu^k\subset I^{0\pm}_{t_k}(x_k)$ 
	as well as $\supp\nu^k\subset I^{\pm 0}_{t_k}(x_k)$. 
	Thus, for sufficiently large $k\in\N$, \eqref{eq:upper_estimate_index_sets} yields
	\[
		\supp\mu^k\subset I^{G}(\bar x)\cup I^{GH}(\bar x),\qquad
		\supp\nu^k\subset I^{H}(\bar x)\cup I^{GH}(\bar x).
	\]
	Using Lemma \ref{lem:derivative_of_Phi}, 
	\begin{equation}\label{eq:KKT_new_multipliers}
		0=\nabla f(x_k)+\sum\limits_{i\in I^g(x_k)}\lambda^k_i\nabla g_i(x_k)
			+\sum\limits_{j\in\mathcal P}\rho^k_j\nabla h_j(x_k)\\
			+\sum\limits_{l\in\mathcal Q}
				\Bigl[
					\mu^k_l\nabla G_l(x_k)+\nu^k_l\nabla H_l(x_k)
				\Bigr]
	\end{equation}
	is obtained.
	
	Next, it will be shown that the sequence $\{(\lambda^k,\rho^k,\mu^k,\nu^k)\}_{k\in\N}$ is bounded.
	Assuming the contrary, we define
	\[
		\forall k\in\N\colon
		\quad 
		(\tilde\lambda^k,\tilde\rho^k,\tilde\mu^k,\tilde\nu^k)
		:=
		\frac{(\lambda^k,\rho^k,\mu^k,\nu^k)}{\norm{(\lambda^k,\rho^k,\mu^k,\nu^k)}{2}}.
	\]
	Clearly, $\{(\tilde\lambda^k,\tilde\rho^k,\tilde\mu^k,\tilde\nu^k)\}_{k\in\N}$ is bounded and, thus,
	possesses a converging subsequence (without relabeling) with nonvanishing limit 
	$(\tilde\lambda,\tilde\rho,\tilde\mu,\tilde\nu)$. The continuity of $g$ yields 
	$\supp\tilde\lambda\subset I^g(\bar x)$. The above considerations yield
	\[
		\supp\tilde\mu\subset I^{G}(\bar x)\cup I^{GH}(\bar x),
		\quad
		\supp \tilde\nu\subset I^{H}(\bar x)\cup I^{GH}(\bar x).
	\]
	Choose $l\in I^{GH}(\bar x)$ arbitrarily. If $\tilde\mu_l\neq 0$ holds true, then
	$\tilde\mu^k_l\neq 0$ must be valid for sufficiently large $k\in\N$. By definition of $\mu^k_l$,
	$l\in I^{0\pm}_{t_k}(x_k)$ follows. Since $I^{0\pm}_{t_k}(x_k)$ and $I^{\pm 0}_{t_k}(x_k)$
	are disjoint, $\nu^k_l=0$ holds for sufficiently large $k\in\N$. This yields $\tilde\nu^k_l=0$
	for sufficiently large $k\in\N$, i.e.\ $\tilde\nu_l=0$ is obtained. This shows
	\[\forall l\in I^{GH}(\bar x)\colon\quad \tilde\mu_l\tilde\nu_l=0.\]
	Dividing \eqref{eq:KKT_new_multipliers} by $\norm{(\lambda^k,\rho^k,\mu^k,\nu^k)}{2}$, taking the
	limit $k\to\infty$, respecting the continuous differentiability of $f$, $g$, $h$, $G$, as well as $H$,
	and invoking the above arguments, we obtain
	\[
		\begin{split}
			&0=\sum\limits_{i\in I^g(\bar x)}\tilde\lambda_i\nabla g_i(\bar x)
				+\sum\limits_{j\in\mathcal P}\tilde\rho_j\nabla h_j(\bar x)
				+\sum\limits_{l\in\mathcal Q}
					\Bigl[
						\tilde\mu_l\nabla G_l(\bar x)+\tilde\nu_l\nabla H_l(\bar x)
					\Bigr],\\
			&\forall i\in I^g(\bar x)\colon\,\tilde\lambda_i\geq 0;\,\forall i\in\mathcal M\setminus I^g(\bar x)\colon\,\tilde\lambda_i=0,\\
			&\forall l\in I^H(\bar x)\colon\,\tilde\mu_l=0,\\
			&\forall l\in I^G(\bar x)\colon\,\tilde\nu_l=0,\\
			&\forall l\in I^{GH}(\bar x)\colon\,\tilde\mu_l\tilde\nu_l=0.
		\end{split}
	\]
	Due to the fact that $(\tilde\lambda,\tilde\rho,\tilde\mu,\tilde\nu)$ does not vanish, 
	this is a contradiction to the validity of MPSC-NNAMCQ. 
	Thus, $\{(\lambda^k,\rho^k,\mu^k,\nu^k)\}_{k\in\N}$ is bounded.
	
	We assume w.l.o.g.\ that $\{(\lambda^k,\rho^k,\mu^k,\nu^k)\}_{k\in\N}$ converges to 
	$(\lambda,\rho,\mu,\nu)$ (otherwise, we choose an appropriate subsequence). Reprising
	the above arguments, we have
	 \[
	 	\begin{split}
	 		&\forall i\in I^g(\bar x)\colon\,\lambda_i\geq 0;\,\forall i\in\mathcal M\setminus I^g(\bar x)\colon\,\lambda_i=0,\\
			&\forall l\in I^H(\bar x)\colon\,\mu_l=0,\\
			&\forall l\in I^G(\bar x)\colon\,\nu_l=0,\\
			&\forall l\in I^{GH}(\bar x)\colon\,\mu_l\nu_l=0.
		\end{split}
	\]
	Taking the limit in \eqref{eq:KKT_new_multipliers} and respecting the continuous 
	differentiability of all appearing mappings, we obtain
	\[
		0=\nabla f(\bar x)+\sum\limits_{i\in I^g(\bar x)}\lambda_i\nabla g_i(\bar x)
			+\sum\limits_{j\in\mathcal P}\rho_j\nabla h_j(\bar x)\\
			+\sum\limits_{l\in\mathcal Q}
				\Bigl[
					\mu_l\nabla G_l(\bar x)+\nu_l\nabla H_l(\bar x)
				\Bigr].
	\]
	This shows that $\bar x$ is an M-stationary point of \eqref{eq:MPSC}.
\end{proof}

In order to show that the validity of MPSC-LICQ at the limit point $\bar x$ ensures that GCQ holds
at all feasible points of \eqref{eq:relaxedMPSC} sufficiently close to $\bar x$ where $t$ is sufficiently
small, we need to study the variational geometry of the sets $X(t)$ in some more detail. 

Fix $t>0$ and some point $\tilde x\in X(t)$. 
For an arbitrary index set $I\subset I^{00}_t(\tilde x)$, we consider the subsequent program:
\begin{equation}\label{eq:relaxedMPSC_decomposed}\tag{P$(t,\tilde x,I)$}
	\begin{aligned}
		f(x)&\,\rightarrow\,\min\\
		g_i(x)&\,\leq\,0,&\qquad&i\in\mathcal M,\\
		h_j(x)&\,=\,0,&&j\in\mathcal P,\\
		-t\,\leq\,G_l(x)&\,\leq\,t,&&l\in I^{0\pm}_t(\tilde x)\cup I,\\
		-t\,\leq\,H_l(x)&\,\leq\,t,&&l\in I^{\pm 0}_t(\tilde x)\cup (I^{00}_t(\tilde x)\setminus I),\\
		\Phi^s_l(x;t)&\,\leq\,0,&&l\in\mathcal Q\setminus I^0_{t,s}(\tilde x),\,s\in\mathcal S.&
	\end{aligned}
\end{equation}
The feasible set of \eqref{eq:relaxedMPSC_decomposed} will be denoted by $X(t,\tilde x,I)$. 
Clearly, $\tilde x$ is a feasible point of \eqref{eq:relaxedMPSC_decomposed} 
for arbitrary $I\subset I^{00}_t(\tilde x)$. Furthermore, $X(t,\tilde x,I)\subset X(t)$ is valid 
for any choice of $I\subset I^{00}_t(\tilde x)$.
\begin{lemma}\label{lem:decomposition_of_tangent_cone}
	For fixed $t>0$ and $\tilde x\in X(t)$, we have
	\[
		\mathcal T_{X(t)}(\tilde x)
		=
		\bigcup\limits_{I\subset I^{00}_{t}(\tilde x)}\mathcal T_{X(t,\tilde x,I)}(\tilde x).
	\]
\end{lemma}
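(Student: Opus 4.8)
The plan is to prove the two inclusions separately; the easy direction is a monotonicity argument, while the substance lies in a local set-theoretic decomposition of $X(t)$ around $\tilde x$, which then feeds into a finite pigeonhole argument for Bouligand cones.

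For the inclusion ``$\supset$'' I would invoke monotonicity of the tangent cone together with the fact, already recorded above, that $X(t,\tilde x,I)\subset X(t)$ for every $I\subset I^{00}_t(\tilde x)$: any sequence realizing a tangent direction of $X(t,\tilde x,I)$ at $\tilde x$ is simultaneously a sequence in $X(t)$, so $\mathcal T_{X(t,\tilde x,I)}(\tilde x)\subset\mathcal T_{X(t)}(\tilde x)$ for each $I$, and passing to the union over the finitely many admissible $I$ preserves the inclusion.

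The core is the inclusion ``$\subset$'', and the key observation is a disjunctive description of the constraint $\varphi\le 0$. A direct inspection of the definition of $\varphi$ gives $\{(a,b)\,|\,\varphi(a,b)\le 0\}=\{a\le 0\}\cup\{b\le 0\}$ (equivalently, $\varphi>0$ exactly on the open first quadrant). Translating this to the functions $\Phi^s_l(\cdot;t)$ yields, for instance, $\Phi^1_l(x;t)\le 0\Leftrightarrow G_l(x)\le t\lor H_l(x)\le t$, with analogous disjunctions for $s\in\{2,3,4\}$. First I would fix a neighbourhood $U$ of $\tilde x$ small enough that every $\Phi^s_l$ with $l\notin I^0_{t,s}(\tilde x)$ stays strictly negative on $U$, that $|H_l(x)|>t$ on $U$ for $l\in I^{0\pm}_t(\tilde x)$ and $|G_l(x)|>t$ for $l\in I^{\pm 0}_t(\tilde x)$, and that the inactive bound at each corner index stays strict. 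On such a $U$ the only genuinely binding alternative is, for each corner index $l\in I^{00}_t(\tilde x)$, the disjunction supplied by the unique active $\Phi^s_l$. Given $x\in X(t)\cap U$, I would build an index set $I=I(x)\subset I^{00}_t(\tilde x)$ by placing $l$ into $I$ precisely when the ``$G$-branch'' of this disjunction holds at $x$ (e.g.\ $G_l(x)\le t$ for $l\in I^{00}_{t,1}(\tilde x)$, $G_l(x)\ge -t$ for $l\in I^{00}_{t,2}(\tilde x)$, and so on) and into $I^{00}_t(\tilde x)\setminus I$ otherwise; since $x\in X(t)$ forces $\Phi^s_l(x;t)\le 0$, at least one branch always holds. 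A routine check against the constraints of \eqref{eq:relaxedMPSC_decomposed} then shows $x\in X(t,\tilde x,I(x))$: the inequality and equality constraints are inherited from $X(t)$; for $l\in I^{0\pm}_t(\tilde x)\cup I$ the active constraint (or the chosen branch) forces $-t\le G_l(x)\le t$ while the opposite bound is strict on $U$, and symmetrically for the $H$-bounds with $l\in I^{\pm 0}_t(\tilde x)\cup(I^{00}_t(\tilde x)\setminus I)$; finally the retained constraints $\Phi^s_l(x;t)\le 0$ with $l\notin I^0_{t,s}(\tilde x)$ hold because $x\in X(t)$. Combined with $X(t,\tilde x,I)\subset X(t)$ this yields the local identity $X(t)\cap U=\bigcup_{I\subset I^{00}_t(\tilde x)}X(t,\tilde x,I)\cap U$.

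With this identity I would conclude using only that the union is \emph{finite} (indexed by the subsets of the finite set $I^{00}_t(\tilde x)$) and that the Bouligand cone is a local object. Given $d\in\mathcal T_{X(t)}(\tilde x)$, choose $x_k\to\tilde x$ in $X(t)$ and $\tau_k\downarrow 0$ with $(x_k-\tilde x)/\tau_k\to d$; for large $k$ one has $x_k\in U$, hence $x_k\in X(t,\tilde x,I_k)$ for some $I_k$. Since there are only finitely many admissible index sets, some $I^\star$ occurs for infinitely many $k$, and restricting to that subsequence exhibits $d$ as a tangent direction of $X(t,\tilde x,I^\star)$ at $\tilde x$, so $d\in\mathcal T_{X(t,\tilde x,I^\star)}(\tilde x)$. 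The main obstacle is the local decomposition and, inside it, the bookkeeping of which constraints are active at $\tilde x$: one must verify that the nonsmoothness of $X(t)$ near $\tilde x$ is concentrated exactly at the corner indices $I^{00}_t(\tilde x)$, that each such corner contributes an \emph{independent} two-way choice, and that all remaining singly active or inactive constraints behave smoothly on a sufficiently small $U$. Once the disjunction $\{\varphi\le 0\}=\{a\le 0\}\cup\{b\le 0\}$ is isolated, what remains is the distributivity of intersection over finite union and the finite pigeonhole argument for tangent cones.
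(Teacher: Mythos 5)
Your proof is correct and follows essentially the same route as the paper: the ``$\supset$'' inclusion via monotonicity of the tangent cone, and the ``$\subset$'' inclusion by assigning to each nearby feasible point an index set $I$ recording which branch ($-t\le G_l\le t$ versus $-t\le H_l\le t$) holds at the corner indices, followed by the finite pigeonhole argument. The explicit disjunction $\{\varphi\le 0\}=\{a\le 0\}\cup\{b\le 0\}$ and the local set identity $X(t)\cap U=\bigcup_I X(t,\tilde x,I)\cap U$ are just a slightly more structured packaging of the same continuity-plus-feasibility case analysis the paper performs directly on the sequence $\{y_k\}$.
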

\begin{proof}
We show both inclusions separately.\\
``$\subset$'' Fix an arbitrary direction $d\in\mathcal T_{X(t)}(\tilde x)$. Then we find sequences 
	$\{y_k\}_{k\in\N}\subset X(t)$ and $\{\tau_k\}_{k\in\N}\subset\R_+$ 
	such that $y_k\to \tilde{x}$, $\tau_k\downarrow 0$, and $(y_k-\tilde x)/\tau_k\to d$ as $k\to\infty$. 
	It is sufficient to verify the existence of an index set $\bar I\subset I^{00}_t(\tilde x)$ such
	that $\{y_k\}_{k\in\N}\cap X(t,\tilde x,\bar I)$ possesses infinite cardinality since this 
	already gives us $d\in\mathcal T_{X(t,\tilde x,\bar I)}(\tilde x)$.
	
	Fix $k\in\N$ sufficiently large and $l\in I^{0\pm}_{t}(\tilde x)$. Then, due to continuity
	of $G_l$, $H_l$, as well as $\varphi$ and feasibility of $y_k$ to \eqref{eq:relaxedMPSC}, 
	we either have $l\in I^{0\pm}_{t}(y_k)$ and, thus, $G_l(y_k)\in\{-t,t\}$, 
	or $-t< G_l(y_k)<t$. Similarly, we obtain $-t\leq H_l(y_k)\leq t$ for all $l\in I^{\pm0}_t(\tilde x)$.
	Due to feasibility of $y_k$ to \eqref{eq:relaxedMPSC},
	we have $-t\leq G_l(y_k)\leq t$ or $-t\leq H_l(y_k)\leq t$. Thus, setting
	\[
		I_k:=I^{00}_t(\tilde x)\cap\{l\in\mathcal Q\,|\,-t\leq G_l(y_k)\leq t\},
	\]
	$y_k\in X(t,\tilde x,I_k)$ is valid. Since there are only finitely many subsets of $I^{00}_t(\tilde x)$ while
	$\{y_k\}_{k\in\N}$ is infinite, there must exist $\bar I\subset I^{00}_t(\tilde x)$ such that
	$\{y_k\}_{k\in\N}\cap X(t,\tilde x,\bar I)$ is of infinite cardinality.\\
``$\supset$'' By definition of the tangent cone, we easily obtain 
	$\mathcal T_{X(t,\tilde x,I)}(\tilde x)\subset\mathcal T_{X(t)}(\tilde x)$
	for any $I\subset I^{00}_t(\tilde x)$. Taking the union over all subsets of $I^{00}_t(\tilde x)$
	yields the desired inclusion.
\end{proof}

\begin{theorem}\label{thm:MPSC_LICQ_yields_GCQ}
	Let $\bar x\in X$ be a feasible point of \eqref{eq:MPSC} where MPSC-LICQ is satisfied.
	Then there exist $\bar t>0$ and a neighborhood $U\subset\R^n$ of $\bar x$ such that GCQ holds
	for \eqref{eq:relaxedMPSC} at all points from $X(t)\cap U$ for all $t\in (0,\bar{t}]$.
\end{theorem}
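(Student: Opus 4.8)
The plan is to verify GCQ in its polar form $\mathcal T_{X(t)}(x)^\circ=\mathcal L_{X(t)}(x)^\circ$ at every feasible point $x\in X(t)\cap U$, building the argument on the tangent-cone decomposition of Lemma~\ref{lem:decomposition_of_tangent_cone} together with the local stability of linear independence. First I would fix the uniform data. Since MPSC-LICQ holds at $\bar x$, the gradient family from Definition~\ref{def:MPSC_CQs} is linearly independent; by continuity of all gradient maps and Lemma~\ref{lem:local_stability_of_positive_linear_independence} (applied with an empty set of sign-constrained vectors, so that it reduces to ordinary linear independence), there is a neighborhood $U$ of $\bar x$ on which the corresponding gradients evaluated at $x$ stay linearly independent, on which $I^g(x)\subset I^g(\bar x)$, and on which the inclusions \eqref{eq:upper_estimate_index_sets} are valid for all sufficiently small $t$; this fixes $U$ and $\bar t$. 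From now on I would fix $x\in X(t)\cap U$ and $t\in(0,\bar t]$ and aim at the displayed polar identity.

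Second, I would invoke Lemma~\ref{lem:decomposition_of_tangent_cone} to write $\mathcal T_{X(t)}(x)=\bigcup_{I\subset I^{00}_t(x)}\mathcal T_{X(t,x,I)}(x)$. The active constraints of each decomposed program \eqref{eq:relaxedMPSC_decomposed} at $x$ are the active inequalities $g_i$, the equalities $h_j$, and exactly one box face per index in $I^{0\pm}_t(x)\cup I^{\pm0}_t(x)\cup I^{00}_t(x)$, since the residual constraints $\Phi^s_l$ with $l\notin I^0_{t,s}(x)$ are inactive at $x$. By \eqref{eq:upper_estimate_index_sets} the associated gradients form a subfamily of the MPSC-LICQ family at $x$, hence are linearly independent; thus LICQ, and a fortiori ACQ, holds for \eqref{eq:relaxedMPSC_decomposed} at $x$, giving $\mathcal T_{X(t,x,I)}(x)=\mathcal L_{X(t,x,I)}(x)$. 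As $I^{00}_t(x)$ is finite, the polarization rule for finite unions yields $\mathcal T_{X(t)}(x)^\circ=\bigcap_{I}\mathcal L_{X(t,x,I)}(x)^\circ$.

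Third, I would identify the linearization cones. Comparing \eqref{eq:relaxedMPSC_decomposed} with Corollary~\ref{cor:linearization_cone_relaxedMPSC} and using Lemma~\ref{lem:derivative_of_Phi}, each non-corner index contributes to $\mathcal L_{X(t,x,I)}(x)$ exactly the same signed half-space in $\nabla G_l(x)$ or $\nabla H_l(x)$ as it does to $\mathcal L_{X(t)}(x)$, whereas a corner index $l\in I^{00}_t(x)$ contributes \emph{nothing} to $\mathcal L_{X(t)}(x)$ (its $\Phi$-gradient vanishes by Lemma~\ref{lem:derivative_of_Phi}) but one extra signed half-space to \eqref{eq:relaxedMPSC_decomposed}: in $\nabla G_l(x)$ if $l\in I$ and in $\nabla H_l(x)$ if $l\notin I$. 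Writing $P:=\mathcal L_{X(t)}(x)^\circ$, Lemma~\ref{lem:polar_of_polyhedral_cone} then gives $\mathcal L_{X(t,x,I)}(x)^\circ=P+\cone\{\varepsilon^G_l\nabla G_l(x):l\in I\}+\cone\{\varepsilon^H_l\nabla H_l(x):l\in I^{00}_t(x)\setminus I\}$ for suitable signs $\varepsilon^G_l,\varepsilon^H_l\in\{\pm1\}$. The inclusion $P\subset\bigcap_I\mathcal L_{X(t,x,I)}(x)^\circ$ is immediate. For the converse, I would take $y$ in the intersection, record its representations for $I=\varnothing$ and $I=I^{00}_t(x)$, and subtract them; the result is a vanishing linear combination of base gradients and of the corner gradients $\nabla G_l(x),\nabla H_l(x)$ ($l\in I^{00}_t(x)$), all of which are distinct members of the linearly independent MPSC-LICQ family. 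Hence each corner coefficient vanishes, so $y\in P$, and therefore $\bigcap_I\mathcal L_{X(t,x,I)}(x)^\circ=P=\mathcal L_{X(t)}(x)^\circ$. Combined with the second step this gives $\mathcal T_{X(t)}(x)^\circ=\mathcal L_{X(t)}(x)^\circ$, i.e.\ GCQ.

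I expect the main obstacle to be conceptual rather than computational. Note that ACQ generally \emph{fails} for \eqref{eq:relaxedMPSC}: the corner indices make $\mathcal T_{X(t)}(x)$ a genuinely nonconvex proper subset of $\mathcal L_{X(t)}(x)$, so GCQ cannot be reached through the naive route $\mathcal T=\mathcal L$. The crux is therefore the third step, where one must show that passing to polars and intersecting over all $I\subset I^{00}_t(x)$ exactly cancels the corner contributions; this is precisely the place where the linear independence supplied by MPSC-LICQ, and its stability under small perturbations, is indispensable.
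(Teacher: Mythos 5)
Your proposal is correct and follows essentially the same route as the paper's proof: stability of linear independence under MPSC-LICQ, LICQ (hence ACQ) for the decomposed problems \eqref{eq:relaxedMPSC_decomposed}, polarization of the tangent-cone decomposition from Lemma~\ref{lem:decomposition_of_tangent_cone}, and then the comparison of the two representations obtained from $I=\varnothing$ and $I=I^{00}_t(x)$ to cancel the corner multipliers via linear independence. The paper carries out the same cancellation argument, merely writing the polars of the decomposed linearization cones out explicitly rather than in the compact form $P+\cone\{\cdot\}$.
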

\begin{proof}
	Due to the validity of MPSC-LICQ at $\bar x$ and the continuous differentiability
	of $g$, $h$, $G$, and $H$, the gradients
	\[
		\begin{split}
			\{\nabla g_i(x)\,|\,i\in I^g(\bar x)\}&\cup\{\nabla h_j(x)\,|\,j\in\mathcal P\}\\
			&\cup\{\nabla G_l(x)\,|\,l\in I^G(\bar x)\cup I^{GH}(\bar x)\}\\
			&\cup\{\nabla H_l(x)\,|\,l\in I^H(\bar x)\cup I^{GH}(\bar x)\}
		\end{split}
	\]
	are linearly independent for all $x$ which are chosen from a sufficiently small
	neighborhood $\mathcal V$ of $\bar x$, 
	see Lemma \ref{lem:local_stability_of_positive_linear_independence}.
	Invoking \eqref{eq:upper_estimate_index_sets}, we can choose a neighborhood
	$U\subset\mathcal V$ of $\bar x$ and $\bar t>0$ such that for any $\tilde x\in X(t)\cap U$, 
	where $t\in (0,\bar{t}]$ holds, 
	we have 
\begin{equation}\label{eq:ThmMPSCLICQyieldsGCQ1}
		\begin{split}
			I^g(\tilde x)&\subset I^g(\bar x),\\
			I^{00}_t(\tilde x)\cup I^{0\pm}_t(\tilde x)&\subset I^G(\bar x)\cup I^{GH}(\bar x),\\
			I^{00}_t(\tilde x)\cup I^{\pm 0}_t(\tilde x)&\subset I^{H}(\bar x)\cup I^{GH}(\bar x).
		\end{split}
\end{equation}
	Particularly, for any such $\tilde x\in\R^n$ and $I\subset I^{00}_t(\tilde x)$, the gradients
	\[
		\begin{split}
			\{\nabla g_i(\tilde x)\,|\,i\in I^g(\tilde x)\}&\cup\{\nabla h_j(\tilde x)\,|\,j\in\mathcal P\}\\
			&\cup\{\nabla G_l(\tilde x)\,|\,l\in I^{0\pm}_t(\tilde x)\cup I\}\\
			&\cup\{\nabla H_l(\tilde x)\,|\,l\in I^{\pm 0}_t(\tilde x)\cup (I^{00}_t(\tilde x)\setminus I)\}
		\end{split}
	\]
	are linearly independent, i.e., standard LICQ is valid for \eqref{eq:relaxedMPSC_decomposed} at $\tilde x$
	for any set $I\subset I^{00}_t(\tilde x)$. This implies 
	$\mathcal T_{X(t,\tilde x,I)}(\tilde x)=\mathcal L_{X(t,\tilde x,I)}(\tilde x)$
	for any $I\subset I^{00}_t(\tilde x)$. Exploiting Lemma \ref{lem:decomposition_of_tangent_cone}, we obtain
	\[
		\mathcal T_{X(t)}(\tilde x)
		=
		\bigcup\limits_{I\subset I^{00}_t(\tilde x)}\mathcal L_{X(t,\tilde x,I)}(\tilde x).
	\]
	Computing the polar cone on both sides yields
	\begin{equation}\label{eq:tangents_via_intersection_of_linearized_tangents}
		\mathcal T_{X(t)}(\tilde x)^\circ
		=
		\bigcap\limits_{I\subset I^{00}_t(\tilde x)}\mathcal L_{X(t,\tilde x,I)}(\tilde x)^\circ.
	\end{equation}
	Define
	\[
		\begin{split}
			\mathcal I^+_G(t,\tilde x,I)&:=I^{0+}_{t,1}(\tilde x)\cup I^{0-}_{t,4}(\tilde x)
					\cup\bigl[I\cap \bigl(I^{00}_{t,1}(\tilde x)\cup I^{00}_{t,4}(\tilde x)\bigr)\bigr],\\
			\mathcal I^-_G(t,\tilde x,I)&:=I^{0+}_{t,2}(\tilde x)\cup I^{0-}_{t,3}(\tilde x)
					\cup\bigl[I\cap \bigl(I^{00}_{t,2}(\tilde x)\cup I^{00}_{t,3}(\tilde x)\bigr)\bigr],\\
			\mathcal I^+_H(t,\tilde x,I)&:=I^{+0}_{t,1}(\tilde x)\cup I^{-0}_{t,2}(\tilde x)
					\cup\bigl[\bigl(I^{00}_t(\tilde x)\setminus I\bigr)
						\cap \bigl(I^{00}_{t,1}(\tilde x)\cup I^{00}_{t,2}(\tilde x)\bigr)\bigr],\\
			\mathcal I^-_H(t,\tilde x,I)&:=I^{-0}_{t,3}(\tilde x)\cup I^{+0}_{t,4}(\tilde x)
					\cup\bigl[\bigl(I^{00}_t(\tilde x)\setminus I\bigr)
						\cap \bigl(I^{00}_{t,3}(\tilde x)\cup I^{00}_{t,4}(\tilde x)\bigr)\bigr]
		\end{split}
	\]
	and observe that these sets characterize the indices $l\in\mathcal{Q}$ 
	where the constraints $G_l(\tilde{x})\le t$, $G_l(\tilde{x})\ge -t$, $H_l(\tilde{x})\le t$, 
	and $H_l(\tilde{x})\ge -t$, respectively, are active in \eqref{eq:relaxedMPSC_decomposed}. 
	We therefore obtain
	\[
		\mathcal L_{X(t,\tilde x,I)}(\tilde x)
		=
		\left\{
			d\in\R^n\,\middle|\,
				\begin{aligned}
					\nabla g_i(\tilde x)\cdot d&\,\leq\,0&&i\in I^g(\tilde x)\\
					\nabla h_j(\tilde x)\cdot d&\,=\,0&&j\in\mathcal P\\
					\nabla G_l(\tilde x)\cdot d&\,\leq\,0&&l\in \mathcal I^+_G(t,\tilde x,I)\\
					\nabla G_l(\tilde x)\cdot d&\,\geq\,0&&l\in \mathcal I^-_G(t,\tilde x,I)\\
					\nabla H_l(\tilde x)\cdot d&\,\leq\,0&&l\in \mathcal I^+_H(t,\tilde x,I)\\
					\nabla H_l(\tilde x)\cdot d&\,\geq\,0&&l\in \mathcal I^-_H(t,\tilde x,I)
				\end{aligned}
		\right\}.
	\]
	Exploiting Lemma \ref{lem:polar_of_polyhedral_cone}, the polar of this cone is easily computed:
	\begin{equation}\label{eq:polar_of_decomposed_linearization_cone}
		\mathcal L_{X(t,\tilde x,I)}(\tilde x)^\circ
		=
		\left\{
			\eta\in\R^n\,\middle|\,
				\begin{aligned}
					&\eta=\sum\limits_{i\in I^g(\tilde x)}\lambda_i\nabla g_i(\tilde x)
						+\sum\limits_{j\in\mathcal P}\rho_j\nabla h_j(\tilde x)\\
					&\qquad\qquad
						+\sum\limits_{l\in\mathcal Q}
							\Bigl[
								\mu_l\nabla G_l(\tilde x)+\nu_l\nabla H_l(\tilde x)
							\Bigr]\\
					&\forall i\in I^g(\tilde x)\colon\,\lambda_i\geq 0\\
					&\forall l\in \mathcal Q\colon\,\mu_l
						\begin{cases}
							\geq 0	&\text{if }l\in \mathcal I^+_G(t,\tilde x,I),\\
							\leq 0	&\text{if }l\in \mathcal I^-_G(t,\tilde x,I),\\
							=0		&\text{otherwise},
						\end{cases}\\
					&\forall l\in \mathcal Q\colon\,\nu_l
						\begin{cases}
							\geq 0	&\text{if }l\in \mathcal I^+_H(t,\tilde x,I),\\
							\leq 0	&\text{if }l\in \mathcal I^-_H(t,\tilde x,I),\\
							=0		&\text{otherwise}
						\end{cases}
				\end{aligned}
		\right\}.
	\end{equation}
	
	Observing that \eqref{eq:relaxedMPSC} is a standard nonlinear problem, 
	$\mathcal T_{X(t)}(\tilde x)\subset\mathcal L_{X(t)}(\tilde x)$
	and, thus,
	$\mathcal L_{X(t)}(\tilde x)^\circ\subset \mathcal T_{X(t)}(\tilde x)^\circ$
	are inherent. It remains to show 
	$\mathcal T_{X(t)}(\tilde x)^\circ\subset \mathcal L_{X(t)}(\tilde x)^\circ$.
	Thus, choose $\eta\in\mathcal T_{X(t)}(\tilde x)^\circ$ arbitrarily. 
	Then, in particular, \eqref{eq:tangents_via_intersection_of_linearized_tangents} yields
	\[
		\eta\in
			\mathcal L_{X(t,\tilde x,\varnothing)}(\tilde x)^\circ
			\cap
			\mathcal L_{X(t,\tilde x,I^{00}_t(\tilde x))}(\tilde x)^\circ.
	\]
	Exploiting the representation \eqref{eq:polar_of_decomposed_linearization_cone},
	we find $\lambda_i,\lambda'_i\geq 0$ ($i\in I^g(\tilde x)$), $\rho,\rho'\in\R^p$,
	$\mu,\mu'\in\R^q$, as well as $\nu,\nu'\in\R^q$ which satisfy
	\[
		\begin{split}
			\eta&=\sum\limits_{i\in I^g(\tilde x)}\lambda_i\nabla g_i(\tilde x)
					+\sum\limits_{j\in\mathcal P}\rho_j\nabla h_j(\tilde x)
					+\sum\limits_{l\in\mathcal Q}
						\Bigl[
							\mu_l\nabla G_l(\tilde x)+\nu_l\nabla H_l(\tilde x)
						\Bigr]\\
				&=\sum\limits_{i\in I^g(\tilde x)}\lambda'_i\nabla g_i(\tilde x)
					+\sum\limits_{j\in\mathcal P}\rho'_j\nabla h_j(\tilde x)
					+\sum\limits_{l\in\mathcal Q}
						\Bigl[
							\mu'_l\nabla G_l(\tilde x)+\nu'_l\nabla H_l(\tilde x)
						\Bigr]	
		\end{split}
	\]
	and
\begin{align*}
			\mu_l&
				\begin{cases}
					\geq 0	&\text{if }l\in I^{0+}_{t,1}(\tilde x)\cup I^{0-}_{t,4}(\tilde x),\\
					\leq 0	&\text{if }l\in I^{0+}_{t,2}(\tilde x)\cup I^{0-}_{t,3}(\tilde x),\\
					=0		&\text{otherwise,}
				\end{cases}\\
			\mu'_l&
				\begin{cases}
					\geq 0	&\text{if }l\in I^{0+}_{t,1}(\tilde x)\cup I^{0-}_{t,4}(\tilde x)\cup I^{00}_{t,1}(\tilde x)\cup I^{00}_{t,4}(\tilde x),\\
					\leq 0	&\text{if }l\in I^{0+}_{t,2}(\tilde x)\cup I^{0-}_{t,3}(\tilde x)\cup I^{00}_{t,2}(\tilde x)\cup I^{00}_{t,3}(\tilde x),\\
					=0		&\text{otherwise,}
				\end{cases}\\
			\nu_l&
				\begin{cases}
					\geq 0	&\text{if }l\in I^{+0}_{t,1}(\tilde x)\cup I^{-0}_{t,2}(\tilde x)\cup I^{00}_{t,1}(\tilde x)\cup I^{00}_{t,2}(\tilde x),\\
					\leq 0	&\text{if }l\in I^{-0}_{t,3}(\tilde x)\cup I^{+0}_{t,4}(\tilde x)\cup I^{00}_{t,3}(\tilde x)\cup I^{00}_{t,4}(\tilde x),\\
					=0		&\text{otherwise,}
				\end{cases}\\
			\nu'_l&
				\begin{cases}
					\geq 0	&\text{if }l\in I^{+0}_{t,1}(\tilde x)\cup I^{-0}_{t,2}(\tilde x),\\
					\leq 0	&\text{if }l\in I^{-0}_{t,3}(\tilde x)\cup I^{+0}_{t,4}(\tilde x),\\
					=0		&\text{otherwise}
				\end{cases}
\end{align*}
	for all $l\in\mathcal Q$. Thus, we obtain
	\[
		\begin{split}
			0	&=\sum\limits_{i\in I^g(\tilde x)}(\lambda_i-\lambda_i')\nabla g_i(\tilde x)
					+\sum\limits_{j\in\mathcal P}(\rho_j-\rho_j')\nabla h_j(\tilde x)\\
				&\qquad\qquad
					+\sum\limits_{l\in\mathcal Q}
						\Bigl[
							(\mu_l-\mu_l')\nabla G_l(\tilde x)+(\nu_l-\nu_l')\nabla H_l(\tilde x)
						\Bigr].
		\end{split}
	\]
	Observing $\supp(\mu-\mu')\subset I^{00}_t(\tilde x)\cup I^{0\pm}_t(\tilde x)$ as well as 
	$\supp(\nu-\nu')\subset I^{00}_t(\tilde x)\cup I^{\pm 0}_t(\tilde x)$ and using \eqref{eq:ThmMPSCLICQyieldsGCQ1}, 
	we obtain $\lambda_i=\lambda_i'$ ($i\in I^g(\tilde x)$),
	$\rho=\rho'$, $\mu=\mu'$, as well as $\nu=\nu'$. Particularly,
	\[
		\eta=\sum\limits_{i\in I^g(\tilde x)}\lambda_i\nabla g_i(\tilde x)
					+\sum\limits_{j\in\mathcal P}\rho_j\nabla h_j(\tilde x)
					+\sum\limits_{l\in\mathcal Q}
						\Bigl[
							\mu_l\nabla G_l(\tilde x)+\nu_l'\nabla H_l(\tilde x)
						\Bigr].
	\]
	is obtained where $\supp\mu\subset I^{0\pm}_t(\tilde x)$ and $\supp\nu'\subset I^{\pm 0}_t(\tilde x)$
	hold true. Finally, we exploit Corollary \ref{cor:linearization_cone_relaxedMPSC} in order to
	see $\eta\in\mathcal L_{X(t)}(\tilde x)^\circ$. This shows the validity of the inclusion
	$\mathcal T_{X(t)}(\tilde x)^\circ\subset \mathcal L_{X(t)}(\tilde x)^\circ$ and, thereby, 
	GCQ holds true for \eqref{eq:relaxedMPSC} at $\tilde x$. Since $t\in(0,\bar t]$ and 
	$\tilde x\in X(t)\cap U$ were arbitrarily chosen, the proof is completed.
\end{proof}

\section{Remarks on other possible relaxation schemes}\label{sec:other_relaxations}

In this section, we discuss three more relaxation approaches for the numerical treatment
of \eqref{eq:MPSC} which are inspired by the rich theory on MPCCs. 
Particularly, the relaxation schemes of \cite{Scholtes2001}, \cite{SteffensenUlbrich2010}, 
as well as \cite{KadraniDussaultBenchakroun2009}, are adapted to the setting
of switching-constrained optimization.

\subsection{The relaxation scheme of Scholtes}\label{sec:Scholtes}

For some parameter $t\geq 0$, let us consider the surrogate problem
\begin{equation}\label{eq:relaxedMPSC_Scholtes}\tag{P$_\text{S}(t)$}
	\begin{aligned}
		f(x)&\,\rightarrow\,\min&&&\\
		g_i(x)&\,\leq\,0,&\qquad&i\in\mathcal M,&\\
		h_j(x)&\,=\,0,&&j\in\mathcal P,&\\
		-t\,\leq\,G_l(x)H_l(x)&\,\leq\,t,&&l\in\mathcal Q.&
	\end{aligned}
\end{equation}
This idea is inspired by Scholtes' global relaxation method
which was designed for the computational treatment of MPCCs, 
see \cite{Scholtes2001} and \cite[Section~3.1]{HoheiselKanzowSchwartz2013}.
The feasible set of \eqref{eq:relaxedMPSC_Scholtes} is denoted by $X_\text S(t)$
and visualized in Figure \ref{fig:relaxed_feasible_set_Scholtes}. 
Note that the family
$\{X_\text S(t)\}_{t\geq 0}$ possesses the same properties as the family
$\{X(t)\}_{t\geq 0}$ described in Lemma \ref{lem:properties_of_relaxed_feasible_set}.
Thus, Scholtes' relaxation is reasonable for switching-constrained problems
as well. In contrast to \eqref{eq:relaxedMPSC}, where we need four inequality
constraints in order to replace one original switching constraint,
one only needs two inequality constraints in \eqref{eq:relaxedMPSC_Scholtes}
for the same purpose. This is a significant advantage of \eqref{eq:relaxedMPSC_Scholtes}
over the surrogate \eqref{eq:relaxedMPSC}.
\begin{figure}[h]\centering
\includegraphics[width=4.0cm]{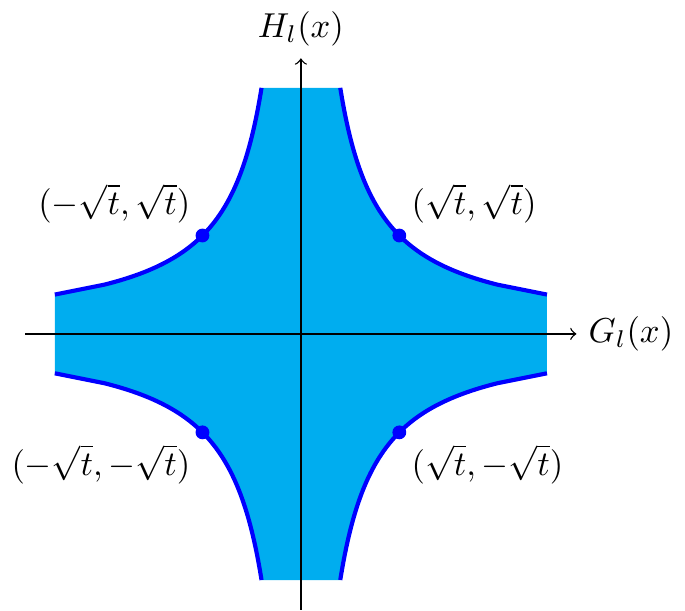}
\caption{Geometric interpretation of the relaxed feasible set $X_\text{S}(t)$.}
\label{fig:relaxed_feasible_set_Scholtes}
\end{figure}

It is well known from \cite[Theorem~3.1]{HoheiselKanzowSchwartz2013} that
Scholtes' relaxation approach finds Clarke-stationary points of MPCCs under
an MPCC-tailored version of MFCQ. Note that, in the
context of MPCCs, Clarke-stationarity is stronger than weak stationarity but weaker than 
Mordukhovich-stationarity.
Below, we want to generalize the result from \cite{HoheiselKanzowSchwartz2013}
to the problem \eqref{eq:MPSC}.
%Doing some obvious changes, one can easily adapt the associated proof technique
%in order to come up with the following result which now addresses \eqref{eq:MPSC}.

For the fixed parameter $t>0$ and a feasible point $x\in X_\text S(t)$
of \eqref{eq:relaxedMPSC_Scholtes}, 
we introduce the index sets
\[
	\begin{split}
		I^+_t(x)&:=\{l\in\mathcal Q\,|\,G_l(x)H_l(x)=t\},\\
		I^-_t(x)&:=\{l\in\mathcal Q\,|\,G_l(x)H_l(x)=-t\}.
	\end{split}
\]
In the upcoming theorem, we provide a convergence result of Scholtes' relaxation 
scheme for the problem \eqref{eq:MPSC}.
\begin{theorem}\label{thm:convergence_Scholtes}
	Let $\{t_k\}_{k\in\N}\subset\R_+$ be a sequence of positive relaxation parameters 
	converging to zero.
	For each $k\in\N$, let $x_k\in X_{\textup{S}}(t_k)$ be a KKT point of \hyperref[eq:relaxedMPSC_Scholtes]{\textup{(P$_{\text{S}}(t_k)$)}}.
	Assume that the sequence $\{x_k\}_{k\in\N}$ converges to a point $\bar x\in X$
	where MPSC-MFCQ holds. Then $\bar x$ is a W-stationary point of \eqref{eq:MPSC}.
\end{theorem}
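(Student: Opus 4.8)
The plan is to mirror the structure of the proof of \cref{thm:convergence_to_M-stationary_point}, adapting it to the two-sided product constraint of Scholtes' scheme. First I would write down the KKT conditions of \hyperref[eq:relaxedMPSC_Scholtes]{(P$_{\text S}(t_k)$)} at $x_k$. Since $\nabla_x(G_lH_l)(x_k)=H_l(x_k)\nabla G_l(x_k)+G_l(x_k)\nabla H_l(x_k)$, the two inequalities $G_lH_l\le t_k$ and $G_lH_l\ge -t_k$ contribute a single scalar $\xi_l^k$ (the difference of the two nonnegative multipliers), which satisfies $\xi_l^k\ge 0$ for $l\in I^+_{t_k}(x_k)$, $\xi_l^k\le 0$ for $l\in I^-_{t_k}(x_k)$, and $\xi_l^k=0$ otherwise, these index sets being disjoint for $t_k>0$. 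Setting $\mu_l^k:=\xi_l^k H_l(x_k)$ and $\nu_l^k:=\xi_l^k G_l(x_k)$ recasts the stationarity of $x_k$ in exactly the W-stationary form
\begin{equation*}
0=\nabla f(x_k)+\sum_{i\in I^g(x_k)}\lambda_i^k\nabla g_i(x_k)+\sum_{j\in\mathcal P}\rho_j^k\nabla h_j(x_k)+\sum_{l\in\mathcal Q}\bigl[\mu_l^k\nabla G_l(x_k)+\nu_l^k\nabla H_l(x_k)\bigr].
\end{equation*}

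The decisive observation, used repeatedly, is a ratio identity: for $l\in I^H(\bar x)$ one has $G_l(\bar x)\neq 0$, hence $G_l(x_k)\neq 0$ for large $k$ and $\mu_l^k=\nu_l^k\,H_l(x_k)/G_l(x_k)$ with $H_l(x_k)/G_l(x_k)\to 0$; symmetrically, for $l\in I^G(\bar x)$ one has $\nu_l^k=\mu_l^k\,G_l(x_k)/H_l(x_k)$ with $G_l(x_k)/H_l(x_k)\to 0$. The core step is then to establish boundedness of $\{(\lambda^k,\rho^k,\mu^k,\nu^k)\}_{k\in\N}$. Assuming the contrary, I would normalize by the Euclidean norm and pass to a subsequence with nonzero limit $(\tilde\lambda,\tilde\rho,\tilde\mu,\tilde\nu)$. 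Continuity of $g$ gives $\supp\tilde\lambda\subset I^g(\bar x)$ with $\tilde\lambda\ge 0$, while the ratio identity applied to the uniformly bounded normalized multipliers forces $\tilde\mu_l=0$ for $l\in I^H(\bar x)$ and $\tilde\nu_l=0$ for $l\in I^G(\bar x)$. Dividing the displayed identity by the norm and letting $k\to\infty$, the surviving gradients $\nabla G_l$ (with $l\in I^G(\bar x)\cup I^{GH}(\bar x)$) and $\nabla H_l$ (with $l\in I^H(\bar x)\cup I^{GH}(\bar x)$) are precisely those occurring in MPSC-MFCQ; their positive-linear independence forces $(\tilde\lambda,\tilde\rho,\tilde\mu,\tilde\nu)=0$, contradicting the normalization.

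With boundedness secured, I would extract a convergent subsequence of the multipliers with limit $(\lambda,\rho,\mu,\nu)$, $\lambda\ge 0$. Applying the ratio identity once more, now to the bounded unnormalized multipliers, yields $\mu_l=0$ for $l\in I^H(\bar x)$ and $\nu_l=0$ for $l\in I^G(\bar x)$; passing to the limit in the displayed stationarity identity and invoking continuous differentiability of all data then gives exactly the W-stationarity conditions of \cref{def:stationarities}. I expect the boundedness argument to be the main obstacle, specifically the need to show that the ``cross'' multipliers vanish in the normalized limit so that MPSC-MFCQ becomes applicable; the ratio identity is what makes this go through. Finally, I would indicate why only W-stationarity can be expected: for $l\in I^{GH}(\bar x)$ one has $\mu_l^k\nu_l^k=(\xi_l^k)^2G_l(x_k)H_l(x_k)$, whose sign is governed by whether $l\in I^+_{t_k}(x_k)$ or $l\in I^-_{t_k}(x_k)$, so the limit product $\mu_l\nu_l$ need not vanish and the M-stationarity condition $\mu_l\nu_l=0$ cannot be enforced.
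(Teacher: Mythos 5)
Your proof is correct and follows essentially the same route as the paper's: rewrite the KKT conditions of \hyperref[eq:relaxedMPSC_Scholtes]{(P$_{\text{S}}(t_k)$)} using the products $\mu_l^k=\xi_l^kH_l(x_k)$ and $\nu_l^k=\xi_l^kG_l(x_k)$, prove boundedness of the multipliers by normalization and MPSC-MFCQ, and pass to the limit. The only (harmless) difference is bookkeeping: the paper sets the cross components $\mu_l^k$ ($l\in I^H(\bar x)$) and $\nu_l^k$ ($l\in I^G(\bar x)$) to zero by definition and therefore must carry $\xi^k_I$ with $I=I^G(\bar x)\cup I^H(\bar x)$ inside the normalized vector to control the leftover terms, whereas your ratio identity keeps everything inside $(\mu^k,\nu^k)$ and makes the final contradiction slightly more direct.
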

\begin{proof}
	Noting that $x_k$ is a KKT point of \hyperref[eq:relaxedMPSC_Scholtes]{(P$_{\text{S}}(t_k)$)}, we
	find multipliers $\lambda^k\in\R^m$, $\rho^k\in\R^p$, and $\xi^k\in\R^q$ which
	satisfy the following conditions:
\begin{align*}
			&0=\nabla f(x_k)+\sum\limits_{i\in I^g(x_k)}\lambda^k_i\nabla g_i(x_k)
				+\sum\limits_{j\in\mathcal P}\rho^k_j\nabla h_j(x_k)\\
			&\qquad\qquad +\sum\limits_{l\in\mathcal Q}\xi^k_l
				\Bigl[
					H_l(x_k)\nabla G_l(x_k)+G_l(x_k)\nabla H_l(x_k)	
				\Bigr],\\
			&\forall i\in I^g(x_k)\colon\,\lambda^k_i\geq 0,\,
				\forall i\in\mathcal M\setminus I^g(x_k)\colon\,\lambda^k_i=0,\\
			&\forall l\in I^+_t(x_k)\colon\,\xi^k_l\geq 0,\\
			&\forall l\in I^-_t(x_k)\colon\,\xi^k_l\leq 0,\\
			&\forall l\in \mathcal Q\setminus(I^+_t(x_k)\cup I^-_t(x_k))\colon\,\xi^k_l=0.
\end{align*}
	For any $k\in\N$ and $l\in\mathcal Q$, let us define artificial multipliers 
	$\mu^k_l,\nu^k_l\in\R$ as stated below:
	\[
		\mu^k_l:=	\begin{cases}
						\xi^k_lH_l(x_k)	&l\in I^{G}(\bar x)\cup I^{GH}(\bar x),\\
						0				&l\in I^{H}(\bar x),
					\end{cases}
		\quad
		\nu^k_l:=	\begin{cases}
						\xi^k_lG_l(x_k)	&l\in I^{H}(\bar x)\cup I^{GH}(\bar x),\\
						0				&l\in I^{G}(\bar x).
					\end{cases}
	\]
	Thus, we obtain
	\begin{equation}\label{eq:Scholtes_stationarity_transformed}
		\begin{split}
			&0=\nabla f(x_k)+\sum\limits_{i\in I^g(x_k)}\lambda^k_i\nabla g_i(x_k)
				+\sum\limits_{j\in\mathcal P}\rho^k_j\nabla h_j(x_k)
				+\sum\limits_{l\in\mathcal Q}
				\Bigl[
					\mu^k_l\nabla G_l(x_k)+\nu^k_l\nabla H_l(x_k)
				\Bigr]\\
			&\qquad\qquad
				+\sum\limits_{l\in I^H(\bar x)}\xi^k_lH_l(x_k)\nabla G_l(x_k)
				+\sum\limits_{l\in I^G(\bar x)}\xi^k_lG_l(x_k)\nabla H_l(x_k).
		\end{split}
	\end{equation}
	Next, we are going to show that the sequence 
	$\{(\lambda^k,\rho^k,\mu^k,\nu^k,\xi^k_I)\}_{k\in\N}$
	is bounded where we used $I:=I^G(\bar x)\cup I^H(\bar x)$ for brevity. 
	We assume on the contrary that this is not the case and define
	\[
		\forall k\in\N\colon
		\quad
		(\tilde\lambda^k,\tilde\rho^k,\tilde\mu^k,\tilde\nu^k,\tilde\xi^k_I)
		:=
		\frac{ 	
				(\lambda^k,\rho^k,\mu^k,\nu^k,\xi^k_I)
			}
			{
				\norm{(\lambda^k,\rho^k,\mu^k,\nu^k,\xi^k_I)}{2}
			}.
	\]
	Clearly, $\{(\tilde\lambda^k,\tilde\rho^k,\tilde\mu^k,\tilde\nu^k,\tilde\xi^k_I)\}_{k\in\N}$
	is bounded and, thus, converges w.l.o.g.\ to a nonvanishing vector 
	$(\tilde\lambda,\tilde\rho,\tilde\mu,\tilde\nu,\tilde\xi_I)$ 
	(otherwise, a suitable subsequence is chosen). The continuity of $g$ ensures
	that $I^g(x_k)\subset I^g(\bar x)$ is valid for sufficiently large $k\in\N$.
	Dividing \eqref{eq:Scholtes_stationarity_transformed} by
	$\norm{(\lambda^k,\rho^k,\mu^k,\nu^k,\xi^k_I)}{2}$ and taking the limit
	$k\to\infty$ while respecting the continuous differentiability of all involved
	functions, we come up with
	\[
		\begin{split}
			&0=\sum\limits_{i\in I^g(\bar x)}\tilde\lambda_i\nabla g_i(\bar x)
				+\sum\limits_{j\in\mathcal P}\tilde\rho_j\nabla h_j(\bar x)
				+\sum\limits_{l\in\mathcal Q}
					\Bigl[
						\tilde\mu_l\nabla G_l(\bar x)+\tilde\nu_l\nabla H_l(\bar x)
					\Bigr],\\
			&\forall i\in I^g(\bar x)\colon\,\tilde\lambda_i\geq 0,\,
				\forall i\in\mathcal M\setminus I^g(\bar x)\colon\,\tilde\lambda_i=0,\\
			&\forall l\in I^H(\bar x)\colon\,\tilde\mu_l=0,\\
			&\forall l\in I^G(\bar x)\colon\,\tilde\nu_l=0.
		\end{split}
	\]
	Now, the validity of MPSC-MFCQ yields $\tilde\lambda=0$, $\tilde\rho=0$, $\tilde\mu=0$,
	and $\tilde\nu=0$. Hence, $\tilde\xi_{l_0}\neq 0$ holds for at least one index $l_0\in I$.
	Let us assume $l_0\in I^H(\bar x)$. Then we have $\nu^k_{l_0}=\xi^k_{l_0}G_{l_0}(x_k)$,
	which leads to
	\[
		\tilde\nu_{l_0}
			=\lim\limits_{k\to\infty}\frac{\nu^k_{l_0}}{\norm{(\lambda^k,\rho^k,\mu^k,\nu^k,\xi^k_I)}{2}}
			=\lim\limits_{k\to\infty}\frac{\xi^k_{l_0}G_{l_0}(x_k)}{\norm{(\lambda^k,\rho^k,\mu^k,\nu^k,\xi^k_I)}{2}}
			=\tilde\xi_{l_0}G_{l_0}(\bar x)\neq 0.
	\]
	This, however, is a contradiction since $\tilde\nu$ vanishes due to the above arguments.
	Similarly, the case $l_0\in I^G(\bar x)$ leads to a contradiction. 
	As a consequence, the sequence $\{(\lambda^k,\rho^k,\mu^k,\nu^k,\xi^k_I)\}_{k\in\N}$ is bounded.
	
	Thus, we may assume w.l.o.g.\ that this sequence converges to $(\lambda,\rho,\mu,\nu,\xi_I)$. Again,
	we take the limit in \eqref{eq:Scholtes_stationarity_transformed} and obtain
	\[	
		\begin{split}
			&0=\nabla f(\bar x)+\sum\limits_{i\in I^g(\bar x)}\lambda_i\nabla g_i(\bar x)
				+\sum\limits_{j\in\mathcal P}\rho_j\nabla h_j(\bar x)
				+\sum\limits_{l\in\mathcal Q}
					\Bigl[
						\mu_l\nabla G_l(\bar x)+\nu_l\nabla H_l(\bar x)
					\Bigr],\\
			&\forall i\in I^g(\bar x)\colon\,\lambda_i\geq 0,\,
				\forall i\in\mathcal M\setminus I^g(\bar x)\colon\,\lambda_i=0,\\
			&\forall l\in I^H(\bar x)\colon\,\mu_l=0,\\
			&\forall l\in I^G(\bar x)\colon\,\nu_l=0
		\end{split}
	\]
	which shows that $\bar x$ is a W-stationary point of \eqref{eq:MPSC}.
\end{proof}

Noting that no suitable definition of Clarke-stationarity 
(in particular, a \emph{reasonable} stationarity concept which is stronger than
W- but weaker than M-stationarity) seems to be 
available for \eqref{eq:MPSC}, Theorem \ref{thm:convergence_Scholtes} 
does not seem to be too surprising at all. 
The following example shows that we cannot expect any stronger results in general.
Thus, the qualitative properties of Scholtes' relaxation method are substantially 
weaker than those of the relaxation scheme proposed in Section \ref{sec:relaxation}.
\begin{example}\label{ex:Scholtes_relaxation}
	Let us consider the switching-constrained optimization problem
	\[
		\begin{split}
			\tfrac12(x_1-1)^2+\tfrac12(x_2-1)^2&\,\rightarrow\,\min\\
			x_1x_2&\,=\,0.
		\end{split}
	\]
	The globally optimal solutions of this program are given by $(1,0)$ as well as
	$(0,1)$, and these points are S-stationary. Furthermore, there exists a
	W-stationary point at $\bar x=(0,0)$ which is no local minimizer.
	
	One can easily check that the associated problem \eqref{eq:relaxedMPSC_Scholtes}
	possesses a KKT point at $(\sqrt t,\sqrt t)$ for any $t\in(0,1]$. Taking
	the limit $t\downarrow 0$, this point tends to $\bar x$ which
	is, as we already mentioned above, only W-stationary for the switching-constrained problem
	of interest. Clearly, MPSC-LICQ is valid at $\bar x$.
\end{example}

Although the theoretical properties of Scholtes' relaxation approach 
do not seem to be promising in light of \eqref{eq:MPSC}, we check
the applicability of the approach. More precisely, we analyze the
restrictiveness of the assumption that a sequence of KKT points
associated with \eqref{eq:relaxedMPSC_Scholtes} can be chosen.

In order to guarantee that locally optimal solutions of the nonlinear 
relaxed surrogate problems \eqref{eq:relaxedMPSC_Scholtes}
which are located closely to the limit point from Theorem 
\ref{thm:convergence_Scholtes} are KKT points,
a constraint qualification needs to be satisfied.
Adapting \cite[Theorem~3.2]{HoheiselKanzowSchwartz2013} to the
switching-constrained situation, it is possible
to show that whenever MPSC-MFCQ is valid at a feasible point $\bar x\in X$
of \eqref{eq:MPSC}, then standard MFCQ is valid for \eqref{eq:relaxedMPSC_Scholtes}
in a neighborhood of $\bar x$.
\begin{theorem}\label{thm:MPSC-MFCQ_yields_MFCQ_Scholtes}
	Let $\bar x\in X$ be a feasible point of \eqref{eq:MPSC} where MPSC-MFCQ
	is satisfied. Then there exists a neighborhood $U\subset\R^n$
	of $\bar x$ such that MFCQ holds for \eqref{eq:relaxedMPSC_Scholtes} at all points from
	$X_{\textup S}(t)\cap U$ for all $t>0$.
\end{theorem}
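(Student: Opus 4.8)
The plan is to argue by contradiction and to push the failure of MFCQ back to a violation of MPSC-MFCQ at $\bar x$ via a normalization-and-limit argument that closely mirrors the boundedness part of the proof of \cref{thm:convergence_Scholtes}. Suppose no such neighborhood exists. Negating the claim and taking a shrinking family of neighborhoods of $\bar x$, I obtain sequences $\{t_k\}_{k\in\N}\subset\R_+$ and $\{x_k\}_{k\in\N}$ with $x_k\in X_{\textup{S}}(t_k)$ and $x_k\to\bar x$ such that MFCQ fails for \eqref{eq:relaxedMPSC_Scholtes} at $x_k$ (the precise values $t_k$ play no further role). Writing out this failure and merging the multipliers of the two product constraints $G_lH_l\le t_k$ and $-G_lH_l\le t_k$ into a single $\xi^k_l$ that is nonnegative on $I^+_{t_k}(x_k)$, nonpositive on $I^-_{t_k}(x_k)$, and zero otherwise, I obtain nontrivial multipliers $(\lambda^k,\rho^k,\xi^k)\ne 0$ with $\lambda^k_i\ge 0$ satisfying the homogeneous relation built from $\nabla g_i(x_k)$, $\nabla h_j(x_k)$, and the product gradients $H_l(x_k)\nabla G_l(x_k)+G_l(x_k)\nabla H_l(x_k)$.

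The second step is exactly the multiplier transformation from \cref{thm:convergence_Scholtes}: I split $\xi^k_l\bigl(H_l(x_k)\nabla G_l(x_k)+G_l(x_k)\nabla H_l(x_k)\bigr)$ along the partition $\{I^G(\bar x),I^H(\bar x),I^{GH}(\bar x)\}$, defining $\mu^k_l$ to keep only the $\nabla G_l$-part on $I^G(\bar x)\cup I^{GH}(\bar x)$ and $\nu^k_l$ to keep only the $\nabla H_l$-part on $I^H(\bar x)\cup I^{GH}(\bar x)$, while the two leftover contributions $\sum_{l\in I^H(\bar x)}\xi^k_lH_l(x_k)\nabla G_l(x_k)$ and $\sum_{l\in I^G(\bar x)}\xi^k_lG_l(x_k)\nabla H_l(x_k)$ are carried along explicitly. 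This reproduces the analogue of \eqref{eq:Scholtes_stationarity_transformed} with the $\nabla f$-term deleted. I then normalize the vector $(\lambda^k,\rho^k,\mu^k,\nu^k,\xi^k_I)$, with $I:=I^G(\bar x)\cup I^H(\bar x)$, to unit length; this vector cannot vanish while $(\lambda^k,\rho^k,\xi^k)\ne 0$, since at any active index $l$ both $G_l(x_k)$ and $H_l(x_k)$ are nonzero (their product being $\pm t_k$), so $\mu^k=\nu^k=0$ would already force $\xi^k=0$.

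Passing to a convergent subsequence with nonvanishing unit-norm limit $(\tilde\lambda,\tilde\rho,\tilde\mu,\tilde\nu,\tilde\xi_I)$ and dividing the transformed relation by the normalizing factor, I let $k\to\infty$. The two leftover sums vanish in the limit precisely because $H_l(\bar x)=0$ on $I^H(\bar x)$ and $G_l(\bar x)=0$ on $I^G(\bar x)$, while $\tilde\xi^k$ stays bounded. Continuity of the gradients together with $I^g(x_k)\subset I^g(\bar x)$ then produces a homogeneous relation among exactly the gradients appearing in MPSC-MFCQ, with $\tilde\lambda\ge 0$ and $\tilde\mu$, $\tilde\nu$ supported on $I^G(\bar x)\cup I^{GH}(\bar x)$ and $I^H(\bar x)\cup I^{GH}(\bar x)$, respectively. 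Positive-linear independence (MPSC-MFCQ) forces $\tilde\lambda=0$, $\tilde\rho=0$, $\tilde\mu=0$, $\tilde\nu=0$. Finally, for $l_0\in I^H(\bar x)$ the identity $\nu^k_{l_0}=\xi^k_{l_0}G_{l_0}(x_k)$ yields $0=\tilde\nu_{l_0}=\tilde\xi_{l_0}G_{l_0}(\bar x)$ with $G_{l_0}(\bar x)\ne 0$, hence $\tilde\xi_{l_0}=0$; the symmetric computation via $\mu^k_{l_0}=\xi^k_{l_0}H_{l_0}(x_k)$ handles $l_0\in I^G(\bar x)$. Thus the whole limit vector vanishes, contradicting its unit norm.

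I expect the main obstacle to be the indices $l\in I^{GH}(\bar x)$: there both $G_l(x_k)$ and $H_l(x_k)$ tend to zero, so the raw multipliers $\xi^k_l$ may blow up even as the product-constraint gradients collapse. The remedy is precisely to normalize by the \emph{transformed} vector $(\lambda^k,\rho^k,\mu^k,\nu^k,\xi^k_I)$ rather than by $(\lambda^k,\rho^k,\xi^k)$, which keeps the $\nabla G_l$- and $\nabla H_l$-contributions on $I^{GH}(\bar x)$ bounded and lets them survive in the limit; the subsequent recovery of $\tilde\xi_I=0$ from the nonvanishing factors $G_{l_0}(\bar x)$ and $H_{l_0}(\bar x)$ on $I^H(\bar x)$ and $I^G(\bar x)$ is the only place where positive-linear independence is promoted into the full contradiction.
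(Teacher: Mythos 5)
Your argument is correct, but it takes a genuinely different route from the paper. The paper proves the statement \emph{directly}: it invokes \cref{lem:local_stability_of_positive_linear_independence} twice to show that, for all $x$ in a suitable neighborhood and all $t>0$, the explicit family of active gradients of \eqref{eq:relaxedMPSC_Scholtes} (with the product gradients $H_l(x)\nabla G_l(x)+G_l(x)\nabla H_l(x)$ viewed as small perturbations of nonzero multiples of $\nabla G_l(\bar x)$ on $I^G(\bar x)$ and of $\nabla H_l(\bar x)$ on $I^H(\bar x)$, and with the raw gradients $\nabla G_l,\nabla H_l$ retained on $I^{GH}(\bar x)$) is positive-linearly independent, and then reads off MFCQ pointwise, recovering $\xi=0$ at the end from $G_l(x),H_l(x)\neq 0$ on active indices. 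You instead argue by contradiction, extract sequences $x_k\to\bar x$, $t_k>0$ along which MFCQ fails, and rerun the multiplier transformation and normalization from the proof of \cref{thm:convergence_Scholtes} with the $\nabla f$-term deleted. Your key observations are sound: the transformed vector $(\lambda^k,\rho^k,\mu^k,\nu^k,\xi^k_I)$ cannot vanish because active indices satisfy $G_l(x_k)H_l(x_k)=\pm t_k\neq 0$; the leftover sums die in the limit because $H_l(\bar x)=0$ on $I^H(\bar x)$ and $G_l(\bar x)=0$ on $I^G(\bar x)$; and you are right that $t_k\to 0$ is never needed, only $t_k>0$. What each approach buys: the paper's direct argument is constructive about the neighborhood (it is the one delivered by the stability lemma) and avoids subsequence extraction, at the cost of having to write down and verify the perturbed gradient family \eqref{eq:positive_linearly_independent_vectors_MPSC_MFCQ} explicitly; your compactness argument is shorter to justify rigorously and reuses machinery already established for \cref{thm:convergence_Scholtes}, but yields no quantitative information about $U$ and requires the slightly delicate point (which you handle correctly) that the normalization must be taken with respect to the \emph{transformed} multipliers so that the potentially unbounded $\xi^k_l$ on $I^{GH}(\bar x)$ never appear undamped.
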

\begin{proof}
	Due to the validity of MPSC-MFCQ at $\bar x$, the union
	\[
		\begin{split}
			\{\nabla g_i(\bar x)\,|\,i\in I^g(\bar x)\}\cup
				\Bigl[\{\nabla h_j(\bar x)\,|\,j\in\mathcal P\}
						&\cup\{\nabla G_l(\bar x)\,|\,l\in I^{G}(\bar x)\cup I^{GH}(\bar x)\}\\
						&\cup\{\nabla H_l(\bar x)\,|\,l\in I^{H}(\bar x)\cup I^{GH}(\bar x)\}
				\Bigr]
		\end{split}
	\]
	is positive-linearly independent. 
	Invoking Lemma \ref{lem:local_stability_of_positive_linear_independence}, there is 
	a neighborhood $U$ of $\bar x$ such that the vectors
	\[
		\begin{split}
			\{\nabla g_i(x)\,|\,i\in I^g(\bar x)\}\cup
				\Bigl[\{\nabla h_j(x)\,|\,j\in\mathcal P\}
						&\cup\{\nabla G_l(x)\,|\,l\in I^{G}(\bar x)\cup I^{GH}(\bar x)\}\\
						&\cup\{\nabla H_l(x)\,|\,l\in I^{H}(\bar x)\cup I^{GH}(\bar x)\}
				\Bigr]
		\end{split}
	\]
	are positive-linearly independent for any choice of $x\in U$. 
	
	Now, fix $t>0$ as well as $x\in X_\text{S}(t)\cap U$ and set $I^\text{a}_t(x):=I^+_t(x)\cup I^-_t(x)$. 
	Note that $t>0$ guarantees $I^+_t(x)\cap I^-_t(x)=\varnothing$. 
	Clearly, we have
	\[
		\begin{aligned}
			&\forall l\in I^H(\bar x)\colon&\quad &G_l(x)\neq 0&\quad&H_l(x)\approx 0,&\\
			&\forall l\in I^G(\bar x)\colon&&G_l(x)\approx 0&&H_l(x)\neq 0
		\end{aligned}
	\]
	if $U$ is sufficiently small. Exploiting Lemma \ref{lem:local_stability_of_positive_linear_independence}
	once more while recalling that $G$ and $H$ are continuously differentiable,
	we obtain that the vectors
	\begin{equation}\label{eq:positive_linearly_independent_vectors_MPSC_MFCQ}
		\begin{split}
			\{\nabla g_i(x)\,|\,i\in I^g(\bar x)\}\cup
				\Bigl[&\{\nabla h_j(x)\,|\,j\in\mathcal P\}\\
						&\cup\{H_l(x)\nabla G_l(x)+G_l(x)\nabla H_l(x)\,|\,l\in I^{G}(\bar x)\cap I^\text a_t(x)\}\\
						&\cup\{H_l(x)\nabla G_l(x)+G_l(x)\nabla H_l(x)\,|\,l\in I^{H}(\bar x)\cap I^\text a_t(x)\}\\
						&\cup\{\nabla G_l(x)\,|\,l\in I^{GH}(\bar x)\cap I^\text a_t(x)\}\\
						&\cup\{\nabla H_l(x)\,|\,l\in I^{GH}(\bar x)\cap I^\text a_t(x)\}
				\Bigr]
		\end{split}
	\end{equation}
	are positive-linearly independent if the neighborhood $U$ is chosen small enough.
	
	Suppose that there are vectors $\lambda\in\R^m$, $\rho\in\R^p$, and $\xi\in\R^q$ which satisfy
	\[
	 	\begin{split}
			&0=\sum\limits_{i\in I^g(x)}\lambda_i\nabla g_i(x)
				+\sum\limits_{j\in\mathcal P}\rho_j\nabla h_j(x)
				+\sum\limits_{l\in\mathcal Q}\xi_l
				\Bigl[
					H_l(x)\nabla G_l(x)+G_l(x)\nabla H_l(x)	
				\Bigr],\\
			&\forall i\in I^g(x)\colon\,\lambda_i\geq 0,\,
				\forall i\in\mathcal M\setminus I^g(x)\colon\,\lambda_i=0,\\
			&\forall l\in I^+_t(x)\colon\,\xi_l\geq 0,\\
			&\forall l\in I^-_t(x)\colon\,\xi_l\leq 0,\\
			&\forall l\in \mathcal Q\setminus I^\text a_t(x)\colon\,\xi_l=0.
		\end{split}
	\]
	In order to show the validity of MFCQ for \eqref{eq:relaxedMPSC_Scholtes} at $x$, 
	$\lambda=0$, $\rho=0$, and $\xi=0$ have to be deduced.
	We get
	\[
		\begin{split}
			&0=\sum\limits_{i\in I^g(x)}\lambda_i\nabla g_i(x)
				+\sum\limits_{j\in\mathcal P}\rho_j\nabla h_j(x)\\
			&\qquad\qquad +\sum\limits_{l\in (I^G(\bar x)\cup I^H(\bar x))\cap I^\text a_t(x)}\xi_l
				\Bigl[
					H_l(x)\nabla G_l(x)+G_l(x)\nabla H_l(x)	
				\Bigr],\\
			&\qquad\qquad +\sum\limits_{l\in I^{GH}(\bar x)\cap I^\text a_t(x)}\xi_lG_l(x)\nabla H_l(x)
							+\sum\limits_{l\in I^{GH}(\bar x)\cap I^\text a_t(x)}\xi_lH_l(x)\nabla G_l(x).
		\end{split}
	\]
	Noting that $\lambda_i\geq 0$ holds for all $i\in I^g(x)$ while $I^g(x)\subset I^g(\bar x)$ holds
	whenever $U$ is chosen sufficiently small, we obtain $\lambda=0$, $\rho=0$, 
	$\xi_l=0$ ($l\in (I^G(\bar x)\cup I^H(\bar x))\cap I^\text a_t(x)$), 
	$\xi_lG_l(x)=0$ ($l\in I^{GH}(\bar x)\cap I^\text a_t(x)$),
	and $\xi_l H_l(x)=0$ ($l\in I^{GH}(\bar x)\cap I^\text{a}_t(x)$) 
	from the positive-linear independence of the vectors in \eqref{eq:positive_linearly_independent_vectors_MPSC_MFCQ}.
	Since we have $G_l(x)\neq 0$ and $H_l(x)\neq 0$ for all $l\in I^\text a_t(x)$ from $t>0$, $\xi_l=0$
	follows for all $l\in I^\text{a}_t(x)$ since $I^G(\bar x)\cup I^H(\bar x)\cup I^{GH}(\bar x)=\mathcal Q$ 
	is valid. This yields $\xi=0$. Thus, MFCQ holds for \eqref{eq:relaxedMPSC_Scholtes} at $x$.
\end{proof}

\subsection{The relaxation scheme of Steffensen and Ulbrich}\label{sec:SteffensenUlbrich}

Here, we adapt the relaxation scheme from \cite{SteffensenUlbrich2010} for
the numerical treatment of \eqref{eq:MPSC}.
For any $t>0$, let us introduce $\phi(\cdot;t)\colon\R\to\R$ by means of
\[
	\forall z\in\R\colon
	\quad
	\phi(z;t):=
	\begin{cases}
		|z|				&	\text{if }|z|\geq t,\\
		t\,\theta(z/t)	&	\text{if }|z|<t,
	\end{cases}
\]
where $\theta\colon[-1,1]\to\R$ is a twice continuously differentiable function
with the following properties:
\begin{align*}
	&\text{(a)}\quad\theta(1)=\theta(-1)=1,&
	&\text{(b)}\quad\theta'(-1)=-1\text{ and }\theta'(1)=1,&\\
	&\text{(c)}\quad\theta''(-1)=\theta''(1)=0,&
	&\text{(d)}\quad\theta''(z)>0\text{ for all }z\in(-1,1).&
\end{align*}
A typical example for a function $\theta$ with the above properties is given by
\begin{equation}\label{eq:example_for_theta}
	\forall z\in[-1,1]\colon
	\quad
	\theta(z):=\tfrac{2}{\pi}\sin\left(\tfrac{\pi}{2}z+\tfrac{3\pi}{2}\right)+1,
\end{equation}
see \cite[Section~3]{SteffensenUlbrich2010}. Noting that the function $\phi(\cdot;t)$
is smooth for any choice of $t>0$, it can be used to regularize the 
feasible set of \eqref{eq:MPSC}. A suitable surrogate problem is given by
\begin{equation}\label{eq:relaxedMPSC_SteffensenUlbrich}\tag{P$_\text{SU}(t)$}
	\begin{aligned}
		f(x)&\,\rightarrow\,\min&&&\\
		g_i(x)&\,\leq\,0,&\qquad&i\in\mathcal M,&\\
		h_j(x)&\,=\,0,&&j\in\mathcal P,&\\
		G_l(x)+H_l(x)-\phi(G_l(x)-H_l(x);t)&\,\leq\,0,&&l\in\mathcal Q,&\\
		G_l(x)-H_l(x)-\phi(G_l(x)+H_l(x);t)&\,\leq\,0,&&l\in\mathcal Q,&\\
		-G_l(x)+H_l(x)-\phi(-G_l(x)-H_l(x);t)&\,\leq\,0,&&l\in\mathcal Q,&\\
		-G_l(x)-H_l(x)-\phi(-G_l(x)+H_l(x);t)&\,\leq\,0,&&l\in\mathcal Q.&
	\end{aligned}
\end{equation}
Its feasible set will be denoted by $X_\text{SU}(t)$ 
and is visualized in Figure \ref{fig:relaxed_feasible_set_Steffensen_Ulbrich}. 
Adapting the proof of \cite[Lemma~3.3]{SteffensenUlbrich2010}, the family
$\{X_\text{SU}(t)\}_{t>0}$ possesses the properties (P2) and (P3) from
Lemma \ref{lem:properties_of_relaxed_feasible_set}. This justifies
that \eqref{eq:relaxedMPSC_SteffensenUlbrich} is a relaxation of \eqref{eq:MPSC}.
Note that we need to introduce four inequality constraints to replace one of the original
switching constraints.

\begin{figure}[h]\centering
\includegraphics[width=4.0cm]{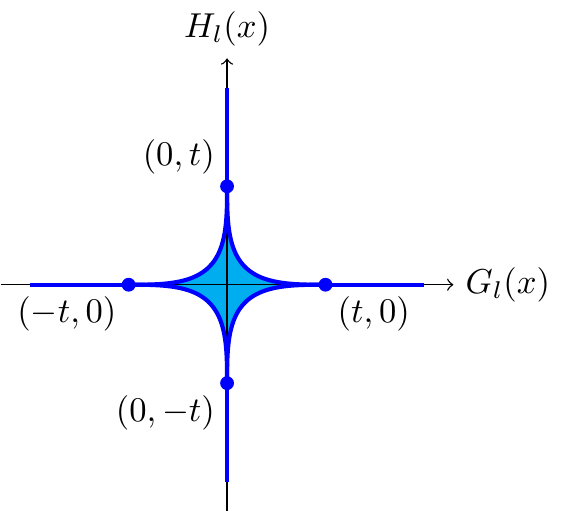}
\caption{Geometric interpretation of the relaxed feasible set $X_\text{SU}(t)$.}
\label{fig:relaxed_feasible_set_Steffensen_Ulbrich}
\end{figure}

It has been mentioned in \cite{HoheiselKanzowSchwartz2013} that the
relaxation scheme of Steffensen and Ulbrich 
computes Clarke-stationary points of MPCCs under an
MPCC-tailored version of CPLD, see \cite[Section~3.4]{HoheiselKanzowSchwartz2013} as well.
Recalling some arguments from Section \ref{sec:Scholtes}, the adapted method may
only find W-stationary points of \eqref{eq:MPSC} in general. 
The upcoming example confirms this conjecture.
\begin{example}\label{ex:SteffensenUlbrich}
	Let us consider the switching-constrained optimization problem
\begin{equation}\label{eq:ExSteffensenUlbrich}
\begin{split}
    x_1x_2-x_1-x_2&\,\rightarrow\,\min\\
    x_1^2+x_2^2-1&\,\leq\,0,\\
    x_1x_2&\,=\,0.
\end{split}
\end{equation}
	Obviously, the globally optimal solutions of this problem are given by 
	$(1,0)$ as well as $(0,1)$, and these
	points are S-stationary. Furthermore, there is a 
	W-stationary point at $\bar x=(0,0)$ which is no local
	minimizer. The global maximizers $(-1,0)$ and $(0,-1)$ do not satisfy
	any of the introduced stationarity concepts.
	
	Let us consider the associated family of nonlinear problems 
	\eqref{eq:relaxedMPSC_SteffensenUlbrich} for $t\in(0,1]$ 
	where the function $\theta$ is chosen as in \eqref{eq:example_for_theta}.
	It can easily be checked that 
	$x(t):=(\tfrac t2(1-\tfrac{2}{\pi}),\tfrac t2(1-\tfrac{2}{\pi}))$
	is a KKT point of \eqref{eq:relaxedMPSC_SteffensenUlbrich}. Note that
	$x(t)\to\bar x$ as $t\downarrow 0$, and that MPSC-LICQ holds in $\bar{x}$. However, $\bar{x}$ is only a W-stationary point of the
	switching-constrained problem \eqref{eq:ExSteffensenUlbrich}.
\end{example}

\subsection{The relaxation scheme of Kadrani, Dussault, and Benchakroun}\label{sec:Kadrani}

Finally, we want to take a closer look at the relaxation approach which
was suggested by
\cite{KadraniDussaultBenchakroun2009} for the treatment of MPCCs.
For any $t\geq 0$, let us consider the optimization problem
\begin{equation}\label{eq:relaxedMPSC_Kadrani}\tag{P$_\text{KDB}(t)$}
	\begin{aligned}
		f(x)&\,\rightarrow\,\min&&&\\
		g_i(x)&\,\leq\,0,&\qquad&i\in\mathcal M,&\\
		h_j(x)&\,=\,0,&&j\in\mathcal P,&\\
		(G_l(x)-t)(H_l(x)-t)&\,\leq\,0,&&l\in\mathcal Q,&\\
		(-G_l(x)-t)(H_l(x)-t)&\,\leq\,0,&&l\in\mathcal Q,&\\
		(G_l(x)+t)(H_l(x)+t)&\,\leq\,0,&&l\in\mathcal Q,&\\
		(G_l(x)-t)(-H_l(x)-t)&\,\leq\,0,&&l\in\mathcal Q,&
	\end{aligned}
\end{equation}
whose feasible set will be denoted by $X_\text{KDB}(t)$. The family
$\{X_\text{KDB}(t)\}_{t\geq 0}$ only satisfies property (P1) from
Lemma \ref{lem:properties_of_relaxed_feasible_set} while (P2) and (P3)
are violated in general. Thus, the
surrogate problem \eqref{eq:relaxedMPSC_Kadrani} does not induce a relaxation technique
for \eqref{eq:MPSC} in the narrower sense.
Figure \ref{fig:relaxed_feasible_set_Kadrani}
depicts that $X_\text{KBD}(t)$ is \emph{almost} disconnected, i.e.,
it is close to crumbling into four disjoint
sets for any $t>0$. This may cause serious problems when standard techniques
are used to solve the associated surrogate problem \eqref{eq:relaxedMPSC_Kadrani}.
Moreover, four inequality constraints are necessary to replace one switching constraint
from \eqref{eq:MPSC} in \eqref{eq:relaxedMPSC_Kadrani}.
\begin{figure}[H]\centering
\includegraphics[width=4.0cm]{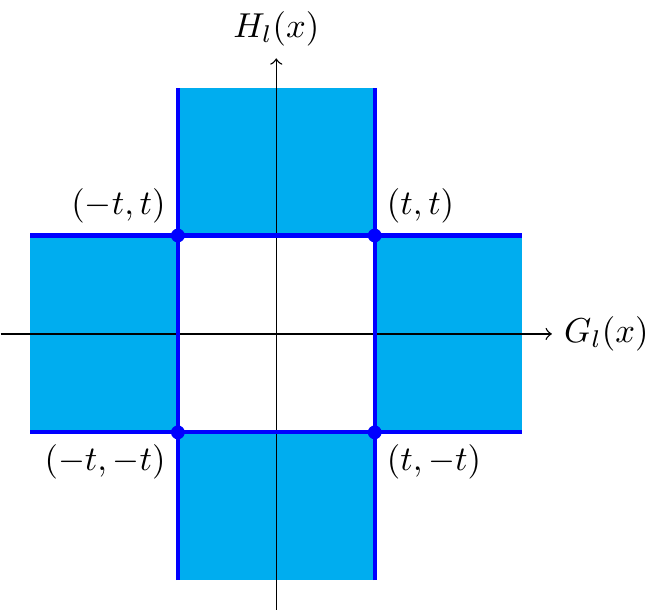}
\caption{Geometric interpretation of the relaxed feasible set $X_\text{KDB}(t)$.}
\label{fig:relaxed_feasible_set_Kadrani}
\end{figure}

On the other hand, it is clear from \cite[Section~3.3]{HoheiselKanzowSchwartz2013} 
that the regularization approach of Kadrani, Dussault, and Benchakroun
computes M-stationary points of MPCCs under an MPCC-tailored version of
CPLD at the limit point. 
Furthermore, if an MPCC-tailored LICQ holds at the limit point, then standard GCQ holds 
for the surrogate problems in a neighborhood of the point for sufficiently small relaxation parameters. 
These results are closely related to those for the relaxation approach from
\cite{KanzowSchwartz2013} which we generalized to \eqref{eq:MPSC} in Sections 
\ref{sec:relaxation} and \ref{sec:convergence_properties}.

Although we abstain from a detailed analysis of the regularization method 
which is induced by the surrogate problem \eqref{eq:relaxedMPSC_Kadrani} 
due to the aforementioned shortcomings, the above arguments motivate
the formulation of the following two conjectures.

\begin{conjecture}\label{con:convergence_Kadrani}
	Let $\{t_k\}_{k\in\N}\subset\R_+$ be a sequence of positive regularization parameters
	converging to zero. For each $k\in\N$, let $x_k\in X_{\textup{KDB}}(t_k)$ be a 
	KKT point of \hyperref[eq:relaxedMPSC_Kadrani]{\textup{(P$_{\text{KDB}}(t_k)$)}}.
	Assume that the sequence $\{x_k\}_{k\in\N}$ converges to a point $\bar x\in X$ 
	where MPSC-MFCQ holds. Then $\bar x$ is an M-stationary point of \eqref{eq:MPSC}.
\end{conjecture}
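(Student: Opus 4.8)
The plan is to follow the proof of Theorem~\ref{thm:convergence_to_M-stationary_point} almost verbatim, with the functions $\Phi^s_l(\cdot;t)$ replaced by the four bilinear constraint functions of \eqref{eq:relaxedMPSC_Kadrani}. First I would write down the KKT conditions of \hyperref[eq:relaxedMPSC_Kadrani]{\textup{(P$_{\text{KDB}}(t_k)$)}} at $x_k$: besides the multipliers $\lambda^k\in\R^m$, $\rho^k\in\R^p$ for the standard constraints, there are nonnegative multipliers $\sigma^{k,1}_l,\dots,\sigma^{k,4}_l$ for the four product constraints, each supported on the corresponding active index set. Since every constraint function is a product of two factors that are affine in $(G_l,H_l)$, the product rule gives their gradients at once; at an active index exactly one factor vanishes (or both, in which case the gradient vanishes), so each active constraint contributes a scalar multiple of either $\nabla G_l(x_k)$ or $\nabla H_l(x_k)$, never a genuine mixture. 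This is the exact analogue of the reduced gradient formulas recorded after Lemma~\ref{lem:derivative_of_Phi}, and in particular a nonzero $\nabla G_l$-contribution forces $G_l(x_k)\in\{-t_k,t_k\}$, while a nonzero $\nabla H_l$-contribution forces $H_l(x_k)\in\{-t_k,t_k\}$.

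Second, I would aggregate the four switching multipliers into vectors $\mu^k,\nu^k\in\R^q$ collecting all coefficients of $\nabla G_l(x_k)$ and of $\nabla H_l(x_k)$, respectively, so that the KKT system takes the W-stationary form
\[
	0=\nabla f(x_k)+\sum_{i\in I^g(x_k)}\lambda^k_i\nabla g_i(x_k)+\sum_{j\in\mathcal P}\rho^k_j\nabla h_j(x_k)+\sum_{l\in\mathcal Q}\bigl[\mu^k_l\nabla G_l(x_k)+\nu^k_l\nabla H_l(x_k)\bigr].
\]
By the support observation above, the continuity of $G,H$ together with $t_k\downarrow 0$ and the reasoning behind \eqref{eq:upper_estimate_index_sets} would yield $\supp\mu^k\subset I^G(\bar x)\cup I^{GH}(\bar x)$ and $\supp\nu^k\subset I^H(\bar x)\cup I^{GH}(\bar x)$ for large $k$, which already encodes the sign requirements $\mu_l=0$ on $I^H(\bar x)$ and $\nu_l=0$ on $I^G(\bar x)$ of W-stationarity. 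Boundedness of $\{(\lambda^k,\rho^k,\mu^k,\nu^k)\}_{k\in\N}$ is then obtained by the usual normalization argument: dividing by $\norm{(\lambda^k,\rho^k,\mu^k,\nu^k)}{2}$ and passing to a convergent subsequence produces a nonvanishing vector solving the homogeneous W-stationary system; since MPSC-MFCQ implies MPSC-NNAMCQ (Definition~\ref{def:MPSC_NNAMCQ}), this contradicts the standing assumption, so the multiplier sequence is bounded and its limit furnishes W-stationary multipliers for $\bar x$.

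The crux, and the reason the statement is only conjectured, is the M-stationarity condition $\mu_l\nu_l=0$ for $l\in I^{GH}(\bar x)$. In Theorem~\ref{thm:convergence_to_M-stationary_point} this was immediate because the relaxed set $X(t)$ is a clean ``thick cross'': the sets $I^{0\pm}_t(x_k)$ and $I^{\pm0}_t(x_k)$ feeding $\mu^k$ and $\nu^k$ are disjoint by the very construction of the NCP-function $\varphi$, so $\mu^k_l\nu^k_l=0$ holds already along the sequence. For \eqref{eq:relaxedMPSC_Kadrani} no such built-in separation exists: the bilinear products do not decouple the two coordinates, and $X_{\textup{KDB}}(t)$ is only \emph{almost} disconnected. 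Concretely, the danger is that two different product constraints are simultaneously active at a biactive index $l$ with one feeding $\nabla G_l(x_k)$ and the other feeding $\nabla H_l(x_k)$ (which would require $(G_l(x_k),H_l(x_k))$ to sit near a ``corner'' $(\pm t_k,\pm t_k)$). I would therefore have to carry out a local analysis of which of the four constraints can be active at once for $x_k$ close to $\bar x$ and $t_k$ small, and show that every admissible configuration places $(G_l(x_k),H_l(x_k))$ on a single ``arm'' of $X_{\textup{KDB}}(t_k)$, so that in the limit at most one of $\mu_l,\nu_l$ survives. This geometric case distinction, re-establishing the analogue of the disjointness $I^{0\pm}_t(x_k)\cap I^{\pm0}_t(x_k)=\varnothing$, is the main obstacle; once it is in place, passing to the limit in the aggregated KKT system exactly as in Theorem~\ref{thm:convergence_to_M-stationary_point} completes the proof.
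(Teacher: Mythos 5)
First, be aware that the paper contains no proof of this statement: it is deliberately formulated as a conjecture, and the authors explicitly write that they \emph{abstain} from a detailed analysis of the Kadrani--Dussault--Benchakroun scheme. So there is no reference argument to measure you against, and the relevant question is whether your attempt closes the gap that made the authors stop at a conjecture. It does not. The routine part of your sketch is plausible: each active product constraint contributes a multiple of $\nabla G_l(x_k)$ or $\nabla H_l(x_k)$, the aggregated multipliers $\mu^k,\nu^k$ put the KKT system into W-stationary form, and the normalization argument bounds the multipliers (here you can contradict the positive-linear independence in MPSC-MFCQ directly, which is cleaner than routing through MPSC-NNAMCQ, since the normalized limit is a priori only known to satisfy the W-stationary sign conditions and not the product condition that the premise of Definition~\ref{def:MPSC_NNAMCQ} requires). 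But the decisive step --- $\mu_l\nu_l=0$ for $l\in I^{GH}(\bar x)$, which is the whole difference between the W-stationarity you obtain and the M-stationarity that is claimed --- is exactly the step you announce and then leave open. You correctly diagnose that the disjointness $I^{0\pm}_{t_k}(x_k)\cap I^{\pm 0}_{t_k}(x_k)=\varnothing$, which made this condition automatic in Theorem~\ref{thm:convergence_to_M-stationary_point}, has no built-in analogue for \eqref{eq:relaxedMPSC_Kadrani}, and the ``geometric case distinction'' you defer is precisely the missing mathematics. An outline that stops at the crux is not a proof of the conjecture.

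A second caution concerns the parts you treat as routine. The geometry of $X_{\textup{KDB}}(t)$ is considerably less benign than your description suggests: the paper records that only property (P1) of Lemma~\ref{lem:properties_of_relaxed_feasible_set} survives, so $X\not\subset X_{\textup{KDB}}(t)$ for $t>0$, and the estimate \eqref{eq:upper_estimate_index_sets} on active index sets, which you invoke to locate $\supp\mu^k$ and $\supp\nu^k$, was derived for $X(t)$ and does not transfer verbatim. Worse, if one simplifies the four constraints of \eqref{eq:relaxedMPSC_Kadrani} as printed, constraints one and four force $G_l(x)\ge t$ while constraints two and three force $G_l(x)\le -t$, so the system appears to admit no solution at all for $t>0$; at the very least this signals that the precise form of the regularized constraints (and hence the set of active configurations at a ``corner'' $(\pm t_k,\pm t_k)$ that your case analysis would have to enumerate) must be pinned down before any of the subsequent arguments can be checked. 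In short: the approach is the natural one and mirrors the paper's own motivation for stating the conjecture, but both the key step and the groundwork it rests on remain unestablished.
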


\begin{conjecture}\label{con:MPSC-LICQ_yields_GCQ_Kadrani}
	Let $\bar x\in X$ be a feasible point of \eqref{eq:MPSC} where MPSC-LICQ is satisfied.
	Then there exist $\bar t>0$ and a neighborhood $U\subset\R^n$ of $\bar x$ such that GCQ
	holds for \eqref{eq:relaxedMPSC_Kadrani} at all points from $X_\textup{KDB}(t)\cap U$ for all $t\in (0,\bar{t}]$.
\end{conjecture}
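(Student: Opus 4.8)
The plan is to mirror the proof of Theorem~\ref{thm:MPSC_LICQ_yields_GCQ} line by line, with the four relaxation functions $\Phi^s_l$ replaced by the four bilinear constraint functions of \eqref{eq:relaxedMPSC_Kadrani}; write $\Psi^1_l(x;t):=(G_l(x)-t)(H_l(x)-t)$ and analogously $\Psi^2_l,\Psi^3_l,\Psi^4_l$ for the remaining three. As a preliminary I would record the gradients $\nabla_x\Psi^s_l(x;t)$ (an analogue of Lemma~\ref{lem:derivative_of_Phi}); the decisive feature is that $\nabla_x\Psi^s_l$ \emph{vanishes} exactly at the degenerate corners where both factors are zero, i.e.\ where $G_l(x),H_l(x)\in\{-t,t\}$, while at every other active point it is a nonzero multiple of a single $\nabla G_l$ or $\nabla H_l$. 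Exactly as in \eqref{eq:upper_estimate_index_sets}, continuity then confines the indices of active $\Psi^s_l$ near $\bar x$ (for small $t$) to $I^G(\bar x)\cup I^{GH}(\bar x)$ on the $G$-side and to $I^H(\bar x)\cup I^{GH}(\bar x)$ on the $H$-side. Invoking MPSC-LICQ together with Lemma~\ref{lem:local_stability_of_positive_linear_independence}, the gradients $\nabla g_i$, $\nabla h_j$, $\nabla G_l$, $\nabla H_l$ belonging to these index sets remain linearly independent on a neighborhood $U$ of $\bar x$. Since $\mathcal T_{X_\text{KDB}(t)}(\tilde x)\subset\mathcal L_{X_\text{KDB}(t)}(\tilde x)$ is automatic, only the inclusion $\mathcal T_{X_\text{KDB}(t)}(\tilde x)^\circ\subset\mathcal L_{X_\text{KDB}(t)}(\tilde x)^\circ$ must be shown for any such $\tilde x$ and $t\in(0,\bar t]$.

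The crucial ingredient is a decomposition of the tangent cone analogous to Lemma~\ref{lem:decomposition_of_tangent_cone}. At a degenerate corner for index $l$ the only locally active Kadrani constraint for that $l$ is the corresponding $\Psi^s_l\le 0$, and the zero set of a product $ab\le 0$ with $a=b=0$ splits locally into the two smooth linear wings $\{a\le 0,\,b\ge 0\}$ and $\{a\ge 0,\,b\le 0\}$. I would therefore introduce, for each assignment $\sigma$ that selects one wing at every degenerate index of $\tilde x$, a decomposed problem $\mathrm P(t,\tilde x,\sigma)$ in which each degenerate bilinear constraint is replaced by the two one-sided linear inequalities describing the chosen wing, the non-degenerate active constraints being kept as in \eqref{eq:relaxedMPSC_Kadrani}. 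A pigeonhole argument identical to the one in Lemma~\ref{lem:decomposition_of_tangent_cone} — an approximating sequence $y_k\to\tilde x$ must, along a subsequence, remain in a single wing at every degenerate index — then yields $\mathcal T_{X_\text{KDB}(t)}(\tilde x)=\bigcup_\sigma\mathcal T_{X(t,\tilde x,\sigma)}(\tilde x)$, where $X(t,\tilde x,\sigma)$ denotes the feasible set of $\mathrm P(t,\tilde x,\sigma)$.

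With the decomposition in hand the argument closes exactly as for Theorem~\ref{thm:MPSC_LICQ_yields_GCQ}. Each decomposed problem has active constraint gradients drawn from the linearly independent family secured above, so standard LICQ holds for $\mathrm P(t,\tilde x,\sigma)$ at $\tilde x$; hence $\mathcal T_{X(t,\tilde x,\sigma)}(\tilde x)=\mathcal L_{X(t,\tilde x,\sigma)}(\tilde x)$ for every $\sigma$. Polarizing and using $(B_1\cup B_2)^\circ=B_1^\circ\cap B_2^\circ$ gives $\mathcal T_{X_\text{KDB}(t)}(\tilde x)^\circ=\bigcap_\sigma\mathcal L_{X(t,\tilde x,\sigma)}(\tilde x)^\circ$, and Lemma~\ref{lem:polar_of_polyhedral_cone} yields the explicit multiplier description of each polar, as in \eqref{eq:polar_of_decomposed_linearization_cone}. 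For $\eta\in\mathcal T_{X_\text{KDB}(t)}(\tilde x)^\circ$ I would then pick the two extremal wing assignments $\sigma,\sigma'$ that differ at \emph{all} degenerate indices, so that they impose opposite sign restrictions on the multipliers $\mu_l,\nu_l$ there; subtracting the two representations of $\eta$ and invoking the linear independence forces those two multiplier vectors to coincide, whence $\mu_l=\nu_l=0$ at every degenerate index. This is precisely consistent with the fact that $\nabla_x\Psi^s_l$ vanishes at a corner, so that index $l$ contributes nothing to $\mathcal L_{X_\text{KDB}(t)}(\tilde x)$; the surviving multipliers then certify $\eta\in\mathcal L_{X_\text{KDB}(t)}(\tilde x)^\circ$ via the analogue of Corollary~\ref{cor:linearization_cone_relaxedMPSC}, establishing GCQ at every $\tilde x\in X_\text{KDB}(t)\cap U$ with $t\in(0,\bar t]$.

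The main obstacle is purely geometric and concerns the local structure of the \emph{almost disconnected} set $X_\text{KDB}(t)$. Because the Kadrani constraints are genuine products whose gradients degenerate at the corners $G_l,H_l\in\{-t,t\}$, one must carefully determine which of the four constraints are active near a given feasible point and verify that the feasible set is indeed, locally, the finite union of smooth wings underlying the decomposition lemma. In particular, deriving the correct analogue of Corollary~\ref{cor:linearization_cone_relaxedMPSC} — the linearization cone of \eqref{eq:relaxedMPSC_Kadrani} expressed through $\nabla G_l$ and $\nabla H_l$ alone — requires care, since at a degenerate corner the vanishing gradient contributes nothing and all binding information is carried by the two wings. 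Once this local description and the matching gradient formulas are pinned down, the polarity and multiplier-matching steps are routine adaptations of the proof of Theorem~\ref{thm:MPSC_LICQ_yields_GCQ}.
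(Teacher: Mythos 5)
First, a point of reference: the paper contains no proof of this statement. It is explicitly labelled a conjecture, and the authors state that they ``abstain from a detailed analysis'' of the Kadrani-type scheme. So your proposal cannot be compared against a reference argument and has to be judged on its own.

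Judged on its own, it has a gap that occurs before any of the tangent-cone machinery starts: you never verify the local structure of $X_{\textup{KDB}}(t)$ on which the whole decomposition rests, and that structure is not what you describe. In fact, for the constraints exactly as written in \eqref{eq:relaxedMPSC_Kadrani} the feasible set is \emph{empty} for every $t>0$ (whenever $q\ge 1$): summing the first two product constraints gives $(G_l(x)-t)(H_l(x)-t)+(-G_l(x)-t)(H_l(x)-t)=-2t\,(H_l(x)-t)\le 0$, i.e.\ $H_l(x)\ge t$, while summing the last two gives $2t\,(H_l(x)+t)\le 0$, i.e.\ $H_l(x)\le -t$ --- a contradiction. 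Hence the conjecture as stated is vacuously true and a correct proof is one line, whereas every structural claim in your argument concerns points of an empty set. Presumably the paper's formulation carries a sign error and a ``four arms meeting at the corners $(\pm t,\pm t)$'' set is intended; but then a proof must first fix the formulation, and your key local claims still need repair. In particular, the assertion that at a degenerate corner ``the only locally active Kadrani constraint for that $l$ is the corresponding $\Psi^s_l\le 0$'' is false: at $(G_l,H_l)=(t,t)$ three of the four products vanish simultaneously --- $\Psi^1_l$ degenerately (zero gradient) and $\Psi^2_l$, $\Psi^4_l$ non-degenerately, with gradients $-2t\nabla H_l$ and $-2t\nabla G_l$ (and $\Psi^3_l=4t^2>0$ there, which is how infeasibility shows up). These additional active constraints enter both the linearization cone (so the analogue of Corollary~\ref{cor:linearization_cone_relaxedMPSC} is not ``no condition at corners'') and the decomposition, so neither the analogue of Lemma~\ref{lem:decomposition_of_tangent_cone} nor the final multiplier-matching step can be copied from Theorem~\ref{thm:MPSC_LICQ_yields_GCQ} without this bookkeeping. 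The overall template --- wing decomposition, LICQ on each piece, polarization, matching the multipliers of two extremal selections --- is the right one and would very likely succeed for a correctly formulated Kadrani-type relaxation, but as it stands the proposal neither dispatches the vacuous statement nor establishes the non-vacuous statement it implicitly targets.
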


\section{Numerical results}\label{sec:numerical_results}

This section is dedicated to a detailed analysis and comparison of various numerical methods for \eqref{eq:MPSC}. 
To obtain a meaningful comparison, we apply the relaxation scheme from Section \ref{sec:relaxation_scheme_KS}
as well as a collection of other algorithms (see below) 
to multiple classes of MPSCs which possess significant practical relevance. 
The particular examples we analyze are:
\begin{itemize}
\item an either-or constrained problem with known local and global solutions
\item a switching-constrained optimal control problem involving the non-stationary heat equation in two dimensions
\item optimization problems involving semi-continuous variables, in particular special instances of portfolio optimization
\end{itemize}
For each of the examples, we first provide an overview of the corresponding problem structure, 
and then give some numerical results. 
To facilitate a quantitative comparison of the used algorithms, we use performance profiles (see \cite{DolanMore2002}) based on the computed function values.

\subsection{Implementation}

The numerical experiments in this section were all done in MATLAB R2018a. 
The particular algorithms we use for our computations are the following:
\begin{itemize}[leftmargin=6em]
\item[\textbf{KS:}] the adapted Kanzow--Schwartz relaxation scheme from this paper
\item[\textbf{FMC:}] the \texttt{fmincon} function from the MATLAB Optimization Toolbox
\item[\textbf{SNOPT:}] the SNOPT nonlinear programming solver from \cite{GillMurraySaunders2002}, called through the TOMLAB programming environment
\item[\textbf{IPOPT:}] the IPOPT interior-point algorithm from \cite{WaechterBiegler2006}
\end{itemize}
The overall implementation is done in MATLAB, and each algorithm is called with user-supplied gradients of the objective functions and constraints. The stopping tolerance for all algorithms is set to $10^{-4}$ (although it should be noted that the methods use different stopping criteria, i.e., they impose the accuracy in different ways). For the KS algorithm, the relaxation parameters are chosen as $t_k:=0.01^k$, and the method is also terminated as soon as $t_k$ drops below $10^{-8}$. Finally, to solve the relaxed subproblems in the relaxation method, we employ the SNOPT algorithm with an accuracy of $10^{-6}$.

Judging by past experience with MPCCs, the SNOPT algorithm can be expected to rival the relaxation scheme in terms of robustness.
To accurately measure the performance of the solvers, it is important to note that MPSCs can, in general, admit a substantial amount of local minimizers.
Therefore, the robustness is best measured by comparing the obtained function values (using different methods and starting points) with the globally optimal function value---if the latter is known; otherwise, a suitable approximate is used.
To avoid placing too much emphasis on the accuracy of the final output (which does not make sense since the algorithms use completely different stopping criteria), we use the quantity
\begin{equation}\label{Eq:PerfProfMetric}
    Q_\delta(x):=
    \begin{cases}
        f(x)-f_{\min}+\delta, & \text{if $x$ is feasible within tolerance}, \\
        +\infty, & \text{otherwise}
    \end{cases}
\end{equation}
as the base metric for the performance profiles, where $x$ is the final iterate of the given algorithmic run, $f_{\min}$ the (approximate) global minimal value of the underlying problem, and $\delta\ge 0$ is an additional parameter which reduces the sensitivity of the values to numerical accuracy.
We have found that an appropriate choice of $\delta$ can significantly improve the meaningfulness of the results.

\subsection{Numerical examples}

The following pages contain three examples of MPSCs. 
In Section \ref{sec:exp2}, we deal with an \emph{either-or constrained} problem, which can be reformulated as an MPSC, see \cite{Mehlitz2018}.
Section~\ref{sec:exp3} is dedicated to a switching-constrained optimal control problem based on the framework from \cite{ClasonRundKunisch2017}.
Finally, in Section~\ref{Sec:Portfolio}, we deal with a class of optimization problems with \emph{semi-continuous variables}, 
which can again be reformulated as MPSCs. 
This section contains a particular example from portfolio optimization which originates from \cite{FrangioniGentile2007}.

%\subsubsection{Experiment 1}\label{sec:exp1}
%
%First, the switching-constrained optimization problem
%\begin{equation}\label{eq:exp1}\tag{E$1$}
%	\begin{split}
%		(x_1-2)^2+(x_2-2)^2+y_1^2+y_2^2&\,\rightarrow\,\min\limits_{x,y,z}\\
%		x_1-y_1+(y_1-1)z_1&\,=\,0\\
%		x_2-y_2+(y_2-1)z_2&\,=\,0\\
%		(y_1-y_2-1)z_1&\,=\,0\\
%		(y_1-2y_2)z_2&\,=\,0
%	\end{split}
%\end{equation}
%will be considered. Some calculations show that the unique global minimizer of this program is given by
%$(\bar x_1,\bar x_2,\bar y_1,\bar y_2,\bar z_1,\bar z_2)=(\tfrac89,2,\tfrac89,\tfrac49,0,\tfrac{14}{5})$ where
%the associated minimal objective value equals $\tfrac{20}{9}$. Furthermore, there are local minimizers
%at $(1,1,1,1,0,0)$ and
%$(2,\tfrac13,\tfrac43,\tfrac13,0,-2)$ with
%associated objective values, $4$ and $\tfrac{14}{3}$, respectively.
%
%\begin{figure}[H]\centering
%\includegraphics[scale=0.5]{}
%\caption{Performance profile for Experiment~1.}
%\end{figure}

\subsubsection{An either-or constrained example}\label{sec:exp2}

Let us consider the optimization problem
\begin{equation}\label{eq:exp2}\tag{E$2$}
	\begin{split}
		(x_1-8)^2+(x_2+3)^2&\,\rightarrow\,\min\\
		x_1-2x_2+4\,\leq\,0\;\lor\;x_1-2&\,\leq\,0,\\
		x_1^2-4x_2\,\leq\,0\;\lor\;(x_1-3)^2+(x_2-1)^2-10&\,\leq\,0.
	\end{split}
\end{equation}
Here, $\lor$ denotes the logical ``or''. The feasible set of this program is visualized in Figure~\ref{fig:exp2}.
{It is easily seen that \eqref{eq:exp2} possesses the unique global minimizer $\bar x=(2,-2)$ and another local minimizer 
$\tilde x=(4,4)$.
\begin{figure}[h]\centering
\includegraphics[width=0.45\textwidth]{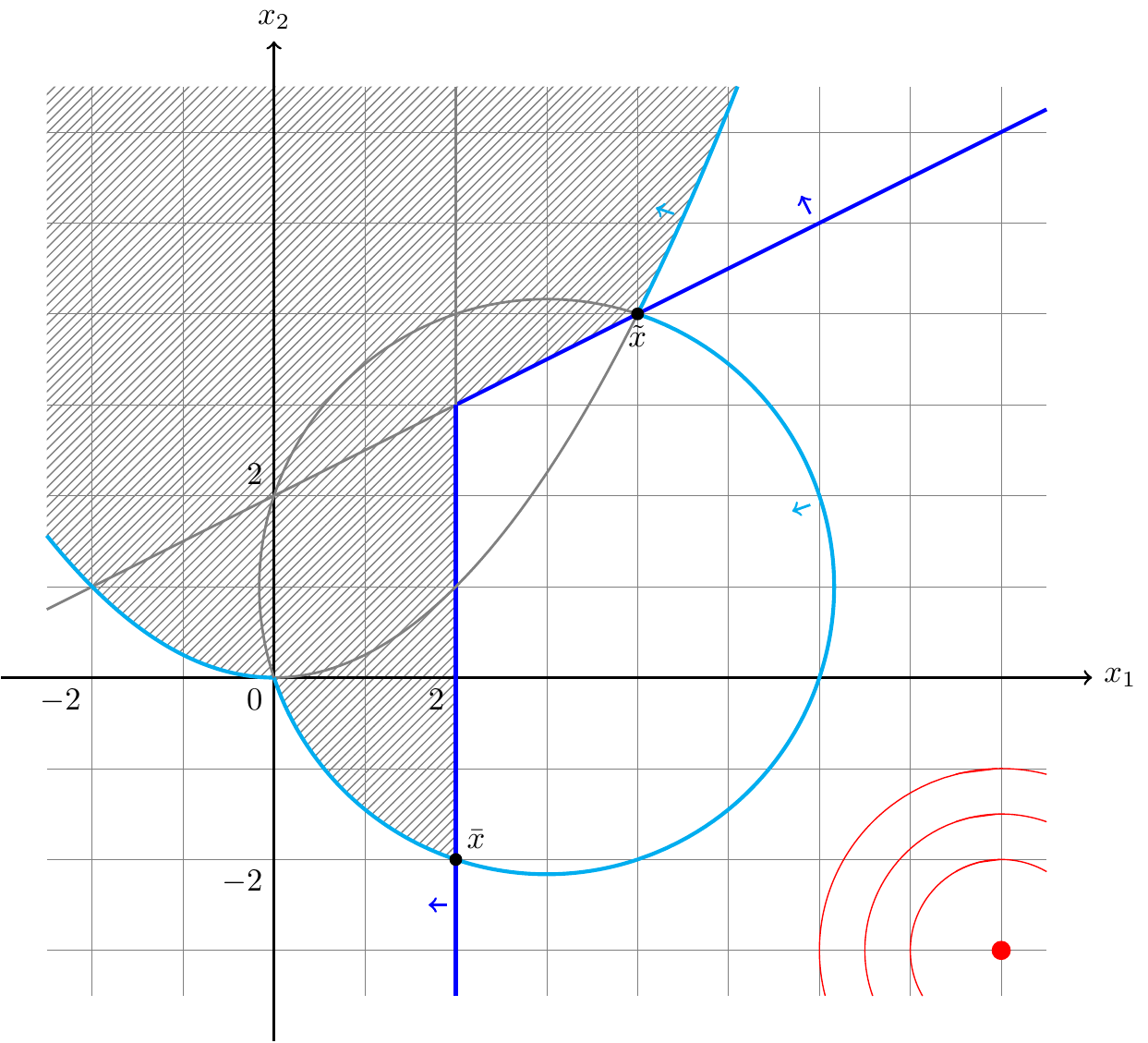}
\hspace{1em}
\includegraphics[width=0.45\textwidth]{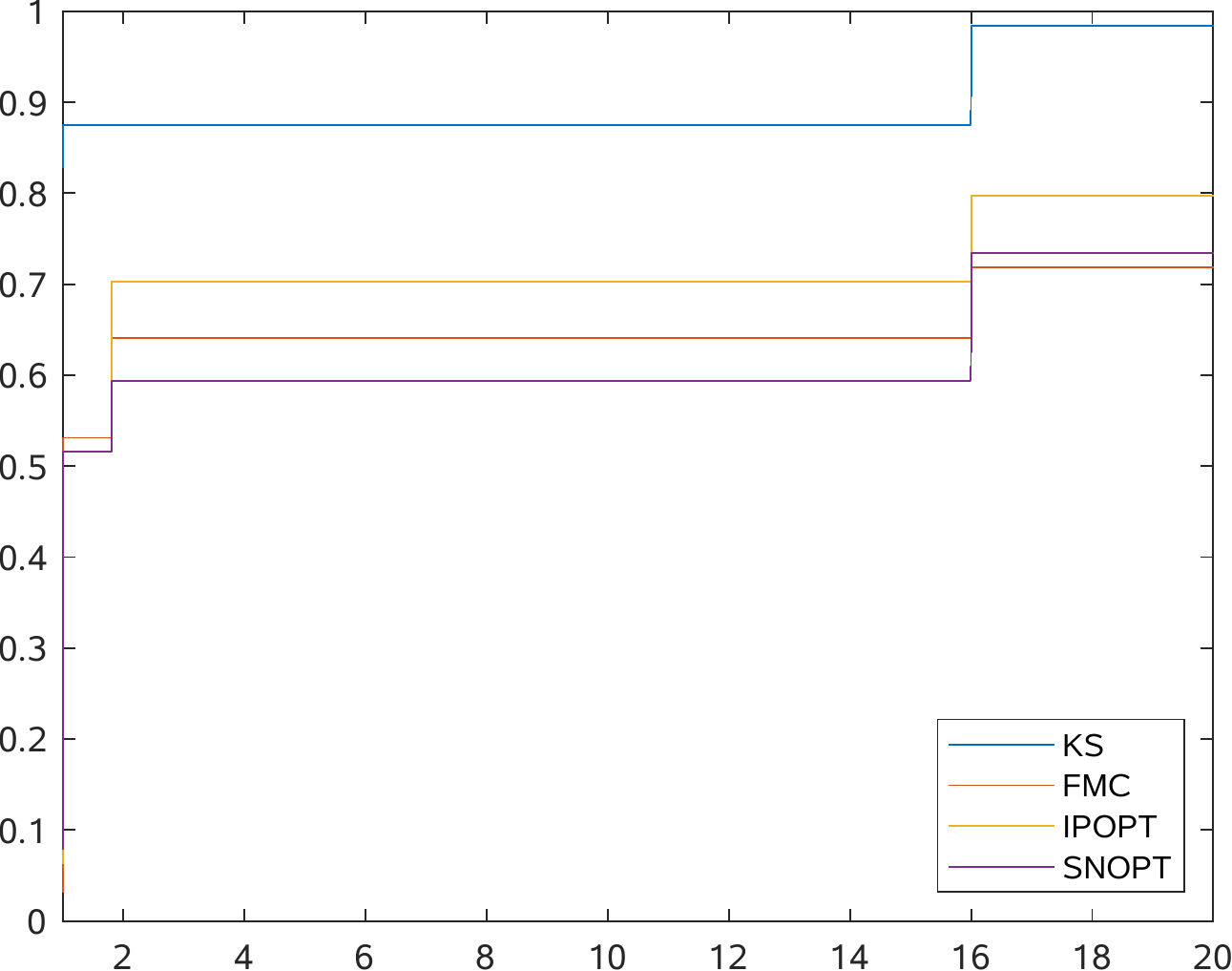}
\caption{The either-or constrained problem from Section~\ref{sec:exp2}: feasible set, some level sets, and local minimizers (left), performance profile (right).}
\label{fig:exp2}
\end{figure}
Arguing as in \cite[Section~7]{Mehlitz2018}, we can transform \eqref{eq:exp2} into a switching-constrained
optimization problem by introducing additional variables:
\begin{equation}\label{eq:exp2_switching}
	\begin{split}
		(x_1-8)^2+(x_2+3)^2&\,\rightarrow\,\min\limits_{x,z}\\
		z_1,z_2,z_3,z_4&\,\leq\,0,\\
		(x_1-2x_2+4-z_1)(x_1-2-z_2)&\,=\,0,\\
		(x_1^2-4x_2-z_3)((x_1-3)^2+(x_2-1)^2-10-z_4)&\,=\,0.
	\end{split}
\end{equation}
Note that the local minimizers of \eqref{eq:exp2} can be found among the local minimizers of \eqref{eq:exp2_switching}
choosing suitable values for the variable $z$, see \cite[Section~7.1]{Mehlitz2018}.

The algorithms in question are each tested with the starting points in the set $\{0,1\}^6$, 
which makes for a total of $64$ starting points. 
The resulting performance profile can also be found in Figure~\ref{fig:exp2}; 
it is based on the metric \eqref{Eq:PerfProfMetric} with $\delta:=1$. 
Clearly, the KS relaxation method is the most robust of the four algorithms, 
finding the best function values (among the tested algorithms) in more than 80\% of the test runs.

\subsubsection{Switching-constrained optimal control}\label{sec:exp3}

Here, we intend to solve a switching-constrained optimal control problem with the proposed relaxation method.
The underlying example is taken from \cite[Section~5.2]{ClasonRundKunisch2017}. 

Let $I:=(0,T)$, with $T:=10$ the final time, $\Omega:=(-1,1)^2$, and let $\Gamma$ be the boundary of $\Omega$. 
Furthermore, we define $\Omega_u:=(-1,0]\times(-1,1)$ as well as $\Omega_v:=(0,1)\times(-1,1)$. 
Let us consider the optimal control of the non-stationary heat equation with zero initial and Neumann boundary conditions given below:
\begin{equation}\label{eq:heat_equation}
\begin{aligned}
    \partial_t y(t,\omega)-\Delta_\omega y(t,\omega)-\tfrac{1}{10}\chi_{\Omega_u}(\omega)u(t)-\tfrac{1}{10}\chi_{\Omega_v}(\omega)v(t)&\,=\,0&\quad&\text{a.e.\ on }I\times\Omega,\\
    \vec{\mathbf n}(\omega)\cdot\nabla_\omega y(t,\omega)&\,=\,0&&\text{a.e.\ on }I\times\Gamma,\\
    y(0,\omega)&\,=\,0&&\text{a.e.\ on }\Omega.
\end{aligned}
\end{equation}
Here, $\chi_A\colon\Omega\to\R$ denotes the characteristic function of a measurable set $A\subset\Omega$ which equals $1$
on $A$ and vanishes otherwise.
Let $y_\text{d}\in L^2(I;H^1(\Omega))$ be the solution of the state equation associated with the desirable
control functions $u_\text{d},v_\text{d}\in L^2(I)$ given by
\begin{equation*}
    \forall t\in I\colon\quad u_\text{d}(t):=20\sin^4 (2\pi t/T ),\quad v_\text{d}(t):=10\cos^4(1.4\pi t/T).
\end{equation*}
All feasible controls $u,v\in H^1(I)$ shall satisfy the switching requirement
\begin{equation}\label{eq:switching_controls}
    u(t)v(t)=0\quad\text{a.e.\ on }I.
\end{equation}
Note that $u_\text{d}$ and $v_\text{d}$ violate this switching condition.
We aim to find the minimum of the objective function defined by
\begin{equation}\label{eq:exp3}
\begin{aligned}
    \mathcal{J}(y,u,v):=\tfrac{1}{2}\norm{y-y_\text d}{L^2(I;L^2(\Omega))}^2
		& +\tfrac{\alpha}{2}\left(\norm{u}{L^2(I)}^2+\norm{v}{L^2(I)}^2\right) \\
		& +\tfrac{\beta}{2}\left(\norm{\partial_t u}{L^2(I)}^2+\norm{\partial_t v}{L^2(I)}^2\right)
\end{aligned}
\end{equation}
with respect to $(y,u,v)\in L^2(I;H^1(\Omega))\times H^1(I)\times H^1(I)$ such that
$(u,v)$ satisfy the switching requirement \eqref{eq:switching_controls} while $y$ solves
the associated state equation \eqref{eq:heat_equation}.
We chose $\alpha:=10^{-6}$ and $\beta:=10^{-5}$ for our experiments.

For the numerical solution of the problem, the domain $\Omega$ is tessellated using the function \texttt{generateMesh} from the MATLAB PDE toolbox 
and a discretization tolerance of $h:=10^{-1}$. 
The time interval $I$ is subdivided into equidistant intervals of size $\tau:=10^{-1}$. 
Both the spatial and temporal discretizations use standard piecewise linear (continuous) finite elements, 
which leads to a conforming approximation of the $H^1$-norm in \eqref{eq:exp3}.

After discretization, the problem turns into a finite-dimensional MPSC comprising the variables $\vec u,\vec v\in\R^{101}$, a quadratic objective function, 
and the switching constraints $\vec u_i\vec v_i=0$ for all $i=1,\ldots,101$. 
These correspond to the simple constraint mappings $G(\vec u,\vec v):=\vec u$ and $H(\vec u,\vec v):=\vec v$. 
Note that the feasible set can be seen as the union of $2^{101}$ convex ``branches'' 
(obtained by setting either $\vec u_i=0$ or $\vec v_i=0$ for each $i=1,\ldots,101$).
Hence, the problem can be expected to admit a substantial amount of local minimizers, 
and it is unrealistic to expect algorithmic implementations to find the global minimizer, even when tested with a large number of initial points. 
To facilitate a quantitative comparison of our numerical algorithms (as in the previous section), 
we use the following heuristic to obtain an upper estimate of the optimal value: 
using a coarser time discretization (with $\tau=0.4$), we compute the \emph{exact} global minimizer of the resulting problem 
by minimizing the objective over each of the branches induced by the switching constraints. 
The corresponding global minimizer is then lifted to the finer time grid (with $\tau=0.1$) by linear interpolation, 
and the resulting point is used as an initial guess for all the used algorithms. 
The resulting estimate of the optimal value is $0.2997$, and the associated controls are depicted in Figure~\ref{fig:possible_global_minimizer}.

For the numerical tests, we generated $1000$ starting points with coordinates chosen randomly in the interval $[0,10]$. 
The performance profile was constructed by using the metric \eqref{Eq:PerfProfMetric} with $\delta:=0$, 
and it too can be found in Figure~\ref{fig:possible_global_minimizer}.

As in the previous example, the relaxation method turns out to be the most robust of the tested algorithms, 
finding lower function values in around 70\% of the test runs. 
When analyzing the results in more detail, it turns out that, as expected, 
the algorithms found an exorbitant amount of distinct points (possibly local minimizers). 
Interestingly, however, the associated function values actually lie quite close to each other. 
This explains the $x$-axis scaling in the performance profile plot.

\begin{figure}[h]\centering
\includegraphics[width=0.45\textwidth]{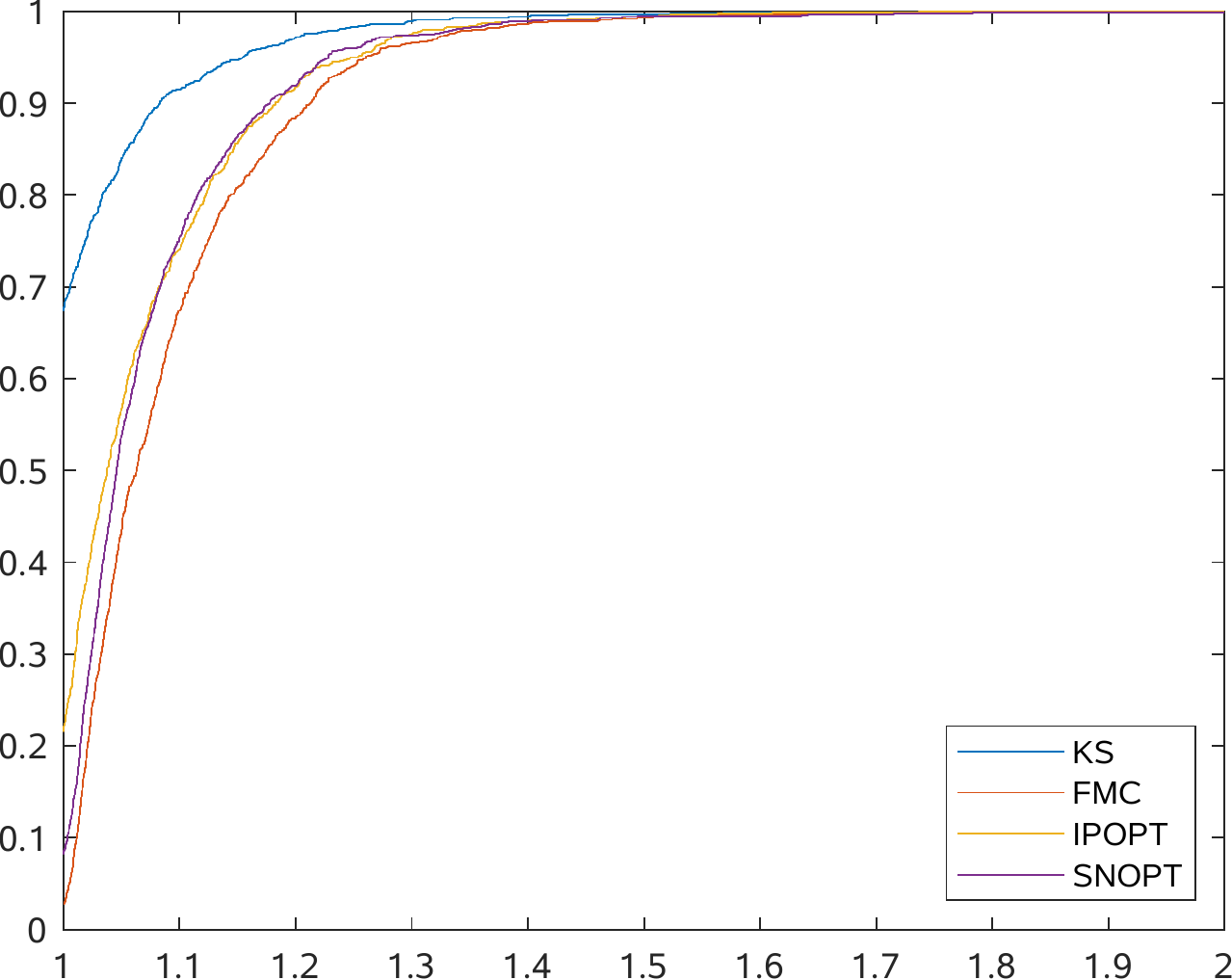}
\hspace{1em}
\includegraphics[width=0.45\textwidth]{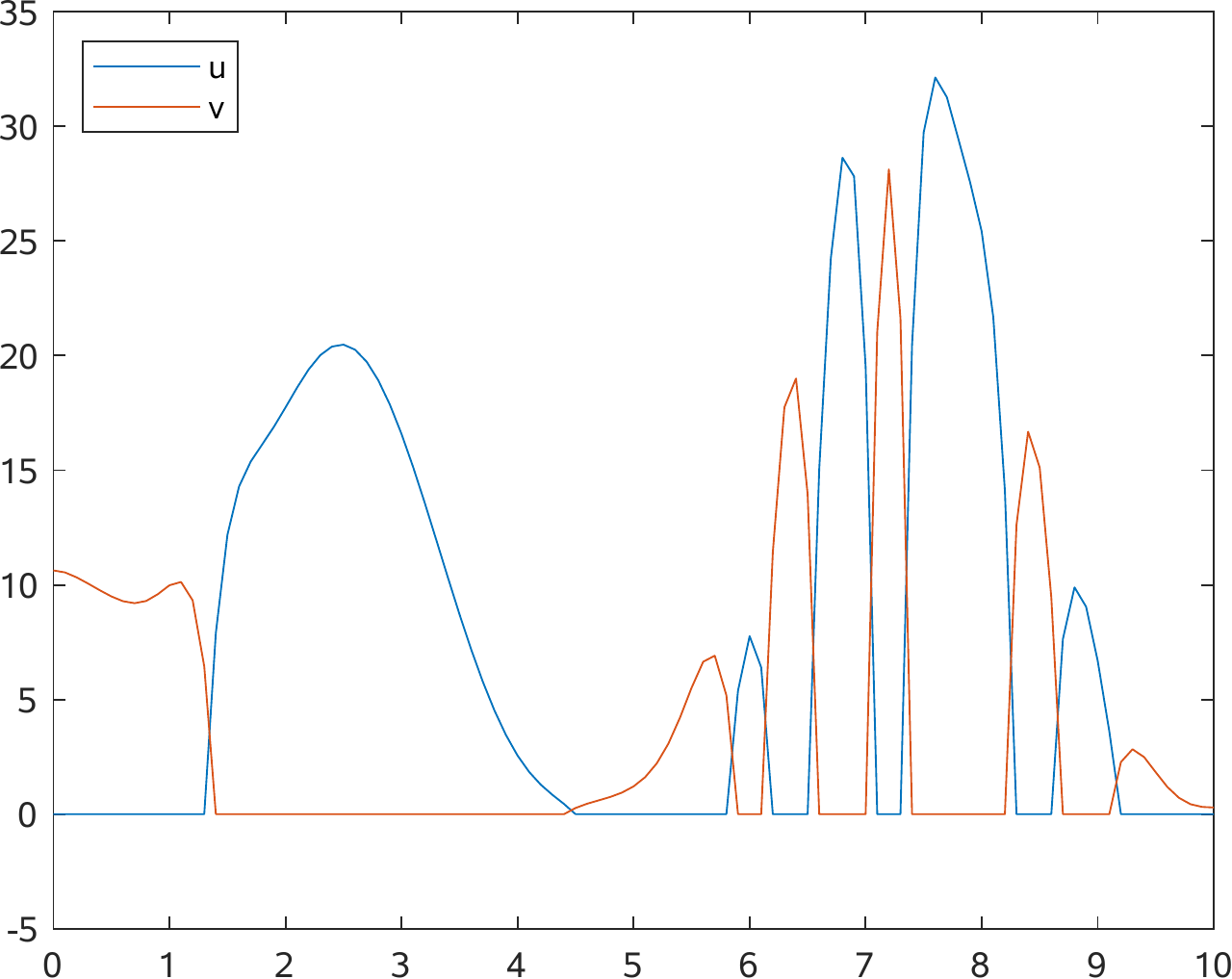}
\caption{Performance profile and (possible) global minimizer for the switching-constrained optimal control problem from Section~\ref{sec:exp3}.}
\label{fig:possible_global_minimizer}
\end{figure}

\subsubsection{Semi-continuous variables}\label{Sec:Portfolio}

In many optimization scenarios, it is desirable that a nonnegative decision variable is either exactly zero 
or contained in some interval whose lower bound is strictly positive. 
This is the case, for instance, in production planning, portfolio optimization, 
compressed sensing in signal processing, and subset selection in regression. 
More details can be found in \cite{BurdakovKanzowSchwartz2016,Sun2013}, and the references therein.

Given a decision variable $x\in\R^n$ and bounds $\ell,u\in\R^n$, $\ell\le u$, a requirement of the above form can be reformulated as the either-or type constraints
\begin{equation}\label{Eq:SemicontinuousConstraint}
    x_i=0 \; \lor \; x_i\in [\ell_i,u_i],\qquad i=1,\ldots,n.
\end{equation}
In this context, the variable $x$ is often called \emph{semi-continuous} since it is required to lie in some continuous interval, 
except for the outlier case when it is equal to zero. 
(One might also be inclined to call $x$ a \emph{semi-discrete} variable, but we have not seen this terminology elsewhere in the literature.)

Constraint systems of the form \eqref{Eq:SemicontinuousConstraint} can be reformulated as switching constraints by using slack variables. 
Indeed, there are two ways of doing so: 
On the one hand, we could introduce two nonnegative slack variables to transform the box constraints 
in \eqref{Eq:SemicontinuousConstraint} into equality constraints; 
this procedure eventually yields an MPSC with $2 n$ switching constraints. 
On the other hand, assuming that $u_i\ge 0$ holds for all $i=1,\ldots,n$ 
(which is the case in nearly all relevant applications), 
we can simply treat the requirement $x_i\le u_i$ as a standard inequality constraint which should be fulfilled at all times. 
Clearly, if $x_i=0$ is valid, then the inequality $x_i\le u_i$ holds automatically, 
so that we can rewrite \eqref{Eq:SemicontinuousConstraint} as
\begin{equation*}
	\begin{aligned}
    x_i&\, \le\,u_i, 				\quad	&&i=1,\ldots,n,&\\
    x_i=0\,\lor\,x_i&\,\ge \ell_i,	\quad 	&&i=1,\ldots,n.&
    \end{aligned}
\end{equation*}
Using a single slack vector $y\in\R^n$, we can now rewrite this system as
\begin{equation*}
	\begin{aligned}
    	x&\,\le\,u,&&&\\
    	y&\,\ge\,0,&&&\\
    	x_i(x_i-\ell_i-y_i)&\,=\,0,\quad&&i=1,\ldots,n.&
    \end{aligned}
\end{equation*}
In the notation of our general framework \eqref{eq:MPSC}, this corresponds to the switching mappings
\begin{equation*}
    G(x,y):=x \quad\text{and}\quad H(x,y):=x-\ell-y.
\end{equation*}
The inequality constraints $x\le u$ and $y\ge 0$ can be implemented as components of the mapping $g$ 
(which may contain other constraints depending on the particular problem). 
Note that the above reformulation approach only results in $n$ switching constraints.

\begin{figure}[h]\centering
\includegraphics[scale=0.5]{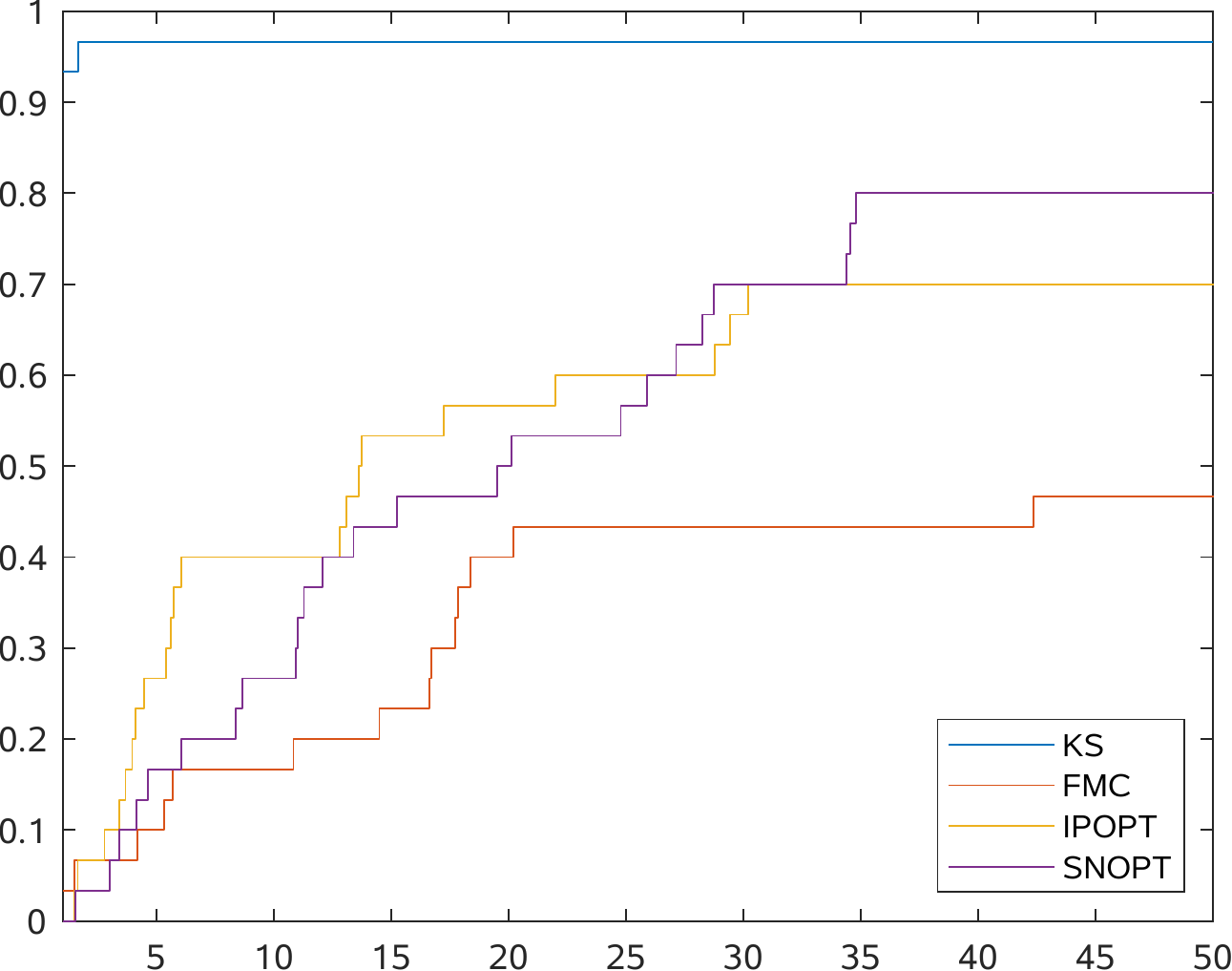}
\caption{Performance profile for the portfolio optimization problem from Section~\ref{Sec:Portfolio}.}
\label{fig:portfolio}
\end{figure}

We now present a concrete example of portfolio optimization based on the test examples in \cite{FrangioniGentile2007}. The problems in this reference have the form
\begin{equation}\label{Eq:PortfolioExample}
\begin{aligned}
    x^{\top}Q x & \,\to\,\min \\
    e^{\top}x &\, =\,1, \\
    \mu^{\top}x &\, \ge \,\rho, \\
    \quad x_i\,=\,0\;\lor\,x_i&\,\in\,[\ell_i,u_i],\quad i=1,\ldots,n,
\end{aligned}
\end{equation}
with randomly generated $Q\in\R^{n\times n}$, $\mu,\ell,u\in\R^n$, and $\rho\in\R$. 
Here, $e\in\R^n$ represents the all-ones vector.
More details can be found in \cite{FrangioniGentile2007} and on their webpage \url{http://www.di.unipi.it/optimize/Data/MV.html}. 
The particular examples we chose are the $30$ instances with size $200$. 
The corresponding problems \eqref{Eq:PortfolioExample} are reformulated as MPSCs by means of the aforementioned procedure, 
and the resulting problems are then attacked by the four test algorithms in question. 
Figure~\ref{fig:portfolio} depicts the resulting performance profile based on the metric \eqref{Eq:PerfProfMetric} with $\delta:=0$.

For this particular problem class, it turns out that the performance advantage of the relaxation method 
is particularly large when compared to its non-relaxed counterparts. 
In 28 out of 30 examples, the KS algorithm finds the best function value among the tested methods; 
in addition, it also seems to find feasible points much more reliably than the other algorithms.

\section{Final remarks}\label{sec:final_results}

In this paper, we have presented a relaxation method for the solution of mathematical programs with switching constraints (MPSCs). 
Our theoretical analysis yields strong convergence properties for the method; 
in particular, the algorithm computes M-stationary points of MPSCs under a problem-tailored constraint qualification (MPSC-NNAMCQ) 
which is weaker than MPSC-MFCQ. 
The numerical experiments include a wide array of practically relevant problems 
and demonstrate the computational efficiency of the proposed algorithm.

In addition, we have conducted a dedicated analysis for other relaxation schemes which are known in the MPCC literature 
and can be carried over to the setting of switching constraints. 
In particular, adapted versions of the relaxation schemes of Scholtes as well as Steffensen and Ulbrich
are shown to converge to weakly stationary points only, even if fairly strong regularity properties such as MPSC-LICQ are satisfied.

%\subsection*{Acknowledgments}

\bibliographystyle{plainnat}
\bibliography{references}

\end{document}